\documentclass[12pt,reqno]{amsart}

\usepackage[margin=1in]{geometry}
\usepackage{enumerate}
\usepackage{ifpdf}
\usepackage{amsmath}
\usepackage{amsfonts}
\usepackage{amssymb}
\usepackage{amsthm}
\usepackage{amscd}
\usepackage[hyperfootnotes=false]{hyperref}
\usepackage{setspace}
\usepackage{amsrefs}
\usepackage{graphicx}

\usepackage{tikz}
\usepackage{rotating}

% useful
\newcommand{\ignore}[1]{}

% analysis/geometry stuff

\renewcommand{\Re}{\operatorname{Re}}

% reals

\newcommand{\abs}[1]{\left\lvert {#1} \right\rvert}
\newcommand{\norm}[1]{\left\lVert {#1} \right\rVert}

% sets (some)
\newcommand{\C}{{\mathbb{C}}}
\newcommand{\R}{{\mathbb{R}}}

\newcommand{\N}{{\mathbb{N}}}

% consistent

\newcommand{\bC}{{\mathbb{C}}}

\newcommand{\bN}{{\mathbb{N}}}

\newcommand{\bP}{{\mathbb{P}}}

\newcommand{\sM}{{\mathcal{M}}}

\newcommand{\sL}{{\mathcal{L}}}

\newcommand{\sV}{{\mathcal{V}}}

\newcommand{\sZ}{{\mathcal{Z}}}

% Topo stuff

\newcommand{\rank}{\operatorname{rank}}

%extra thingies

\newtheorem{thm}{Theorem}[section]
\newtheorem*{thmnonum}{Theorem}

\newtheorem{prop}[thm]{Proposition}

\newtheorem{cor}[thm]{Corollary}

\newtheorem{lemma}[thm]{Lemma}

\theoremstyle{definition}
\newtheorem{defn}[thm]{Definition}
\newtheorem{example}[thm]{Example}

\theoremstyle{remark}
\newtheorem{remark}[thm]{Remark}

\author{Dusty Grundmeier}
\thanks{The first author acknowledges support from NSF grants DMS 0838434 and NSF RTG-0602191.}
\address{Department of Mathematics, University of Michigan,
Ann Arbor, MI 48109, USA}
\email{grundmer@umich.edu}

\author{Ji\v{r}\'i Lebl}
\thanks{The second author was in part supported by NSF grant DMS 0900885.}
\address{Department of Mathematics, University of Wisconsin, 
Madison, WI 53706, USA}
\email{lebl@math.okstate.edu}

\author{Liz Vivas}
\address{Department of Mathematics, Purdue University, 
West Lafayette, IN 47907, USA}
\email{lvivas@math.purdue.edu}

\dedicatory{Dedicated to the memory of Professor M.~Salah Baouendi}

%\date{\today}
\date{October 9, 2013}

\ifpdf
\hypersetup{
pdftitle={Bounding the rank of Hermitian forms and rigidity for CR mappings of hyperquadrics},
pdfauthor={Dusty Grundmeier, Jiri Lebl, and Liz Vivas}
}
\fi

\title[Rank of Hermitian forms and rigidity of CR maps]
{Bounding the rank of Hermitian forms and rigidity for CR mappings of hyperquadrics}

\begin{document}

%\doublespace

\begin{abstract}
Using Green's hyperplane restriction theorem,
we prove that the rank of a Hermitian form on the space of holomorphic
polynomials is bounded by a constant depending only on the maximum rank
of the form restricted to affine manifolds.
As an application we prove a rigidity theorem
for CR mappings between hyperquadrics in the spirit of
the results of Baouendi-Huang and Baouendi-Ebenfelt-Huang.
Given a real-analytic CR mapping of a hyperquadric (not equivalent to a sphere)
to another hyperquadric $Q(A,B)$, either the image of the mapping
is contained in a complex affine subspace, or $A$ is bounded by
a constant depending only on $B$.
Finally, we prove a stability result about existence of
nontrivial CR mappings of hyperquadrics.  That is, as long as both $A$
and $B$ are sufficiently large and comparable, then there exist CR mappings
whose image is not contained in a hyperplane.
The rigidity result also extends when mapping to
hyperquadrics in infinite dimensional Hilbert-space.
\end{abstract}

\maketitle

%\enlargethispage{\baselineskip}

%%%%%%%%%%%%%%%%%%%%%%%%%%%%%%%%%%%%%%%%%%%%%%%%%%%%%%%%%%%%%%%%%%%%%%%%%%%%%

\section{Introduction} \label{section:intro}

A real-valued polynomial, or a real-analytic function,
$r(z,\bar{z})$ on $\C^n$
can be regarded as a Hermitian form by considering the
matrix of coefficients of the series.  That is, in multi-index
notation, write
\begin{equation} \label{eq:series}
r(z,\bar{z}) = \sum_{\alpha \beta} c_{\alpha \beta} z^\alpha \bar{z}^\beta .
\end{equation}
The matrix $C = {[ c_{\alpha \beta} ]}_{\alpha \beta}$ is Hermitian symmetric
if and only if $r$ is real-valued ($C$ is uniquely determined by $r$).

When we apply linear algebra terminology
(such as \emph{rank}, \emph{eigenvalues}, \emph{signature},
or \emph{positive semidefinite}) to $r$, we simply refer to
the underlying matrix $C$.
When $r$ is a polynomial, the terminology is
obvious.  When $r$ is real-analytic, then $C$ is an
infinite matrix.  After possibly rescaling so that the series converges 
in a neighborhood of the closed unit polydisc, $C$ defines a 
Hermitian trace-class operator, and the terminology easily extends to the real-analytic
case; see section \ref{section:racase}.
While the matrix depends on the point where we expand the series,
we show that the
rank does not change under a biholomorphic change of coordinates.
In particular the rank does not depend on the point
where we expand the series.

In order to state the first result, we define the rank of the
restriction to an affine manifold.  The Grassmannian $G(m,n)$ is the manifold of all $m$-dimensional
linear subspaces of $\bC^n$, and the affine Grassmannian $G_{m,n}$ is the manifold of all affine $m$-dimensional subspaces
in $\bC^n$.  Notice that $G_{m,n}$ is an open set in $G(m+1,n+1)$.

Let $U \subset \C^n$ be a convex neighborhood of the origin, and define
$U^* := \{ z : \bar{z} \in U \}$.  Suppose 
that the series \eqref{eq:series} converges in $U \times U^*$.
Let $L$ be an affine $m$-plane in $\bC^n$ (i.e. an element of $G_{m,n}$).  Let
$E \colon \C^m \to \C^n$ be an affine embedding of $L$ in $\bC^n$. If $L$ intersects $U$, then
define
\begin{equation}
\rank r|_L := \rank r \circ E .
\end{equation}
It is not hard to show that the rank does not depend on the particular
embedding $E$, and therefore the notation is well-defined.  Furthermore
as $U$ is convex, $U \cap L$ is always connected.
If $L$ does not intersect $U$, then define 
\begin{equation}
\rank r|_L := -\infty.
\end{equation}
%FIXME:  This paragraph seems a bit stupid. 
Let $\N_0 = \N \cup \{ 0 \}$.

\begin{thm} \label{thm:rankres}
Let $n\geq 2$ and let $1 \leq m \leq n-1$.  Let $r(z,\bar{z})$ be a real-analytic function defined
in a convex neighborhood $U \subset \C^n$ of the origin such that the complexified
power series converges in $U \times U^*$.  If 
\begin{equation}
\sup_{L \in G_{m,n}} \,\rank r|_L < \infty,
\end{equation}
then $\rank r < \infty$.

Moreover, there exists a function $R_{m,n} \colon \N_0 \to \N_0$ such that for any such
real-analytic $r$,
\begin{equation}
\rank r \leq R_{m,n}\left(
\max_{L \in G_{m,n}} \,\rank r |_L
\right) .
\end{equation}
%
%
%Let $r(z,\bar{z})$ be a nonzero real-analytic function defined
%in a connected neighborhood $U \subset \C^n$ of the origin, $n \geq 2$, such that the complexified
%series converges in $U \times U^*$ and let $1 \leq m \leq n-1$.
%Suppose that
%\begin{equation}
%\max_{L \in G_{m,n}} \,\rank r|_L < \infty.
%\end{equation}
%Then $\rank r$ is finite, and moreover
%there exists a function
%$R_{m,n} \colon \N \to \N$ such that
%\begin{equation}
%\rank r \leq R_{m,n}\left(
%\max_{L \in G_{m,n}} \,\rank r |_L
%\right) .
%\end{equation}
\end{thm}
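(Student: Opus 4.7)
The plan is to reduce to the case of hyperplane restrictions ($m = n-1$) by induction on the codimension $n-m$, and then to handle the hyperplane case by combining a Hermitian eigendecomposition of $r$ with Green's hyperplane restriction theorem applied to the vector space spanned by the eigen-factors.

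For the inductive reduction, suppose the theorem is known for restrictions to affine $(m+1)$-planes in any ambient dimension. Given $r$ on $\C^n$ with $N = \sup_{L \in G_{m,n}} \rank r|_L < \infty$, any $L' \in G_{m+1,n}$ meeting $U$ yields a real-analytic function $r|_{L'}$ on $L' \cong \C^{m+1}$. Since the $m$-planes inside $L'$ are themselves $m$-planes in $\C^n$, the hypothesis gives $\sup_{L \in G_{m,m+1}} \rank(r|_{L'})|_L \le N$; the hyperplane case applied in $\C^{m+1}$ then bounds $\rank r|_{L'}$ by $R_{m,m+1}(N)$, uniformly in $L'$. The inductive hypothesis, applied to the family of $(m+1)$-plane restrictions of $r$, produces a bound on $\rank r$, with $R_{m,n}$ appearing as the composition $R_{n-1,n} \circ \cdots \circ R_{m,m+1}$.

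For the base case $m = n-1$, I first obtain the qualitative finiteness $\rank r < \infty$ via a Baire-category argument: if $\rank r = \infty$, the trace-class eigendecomposition has infinitely many linearly independent eigenfunctions, and for a dense $G_\delta$ set of hyperplanes $H$ those eigenfunctions restrict to a linearly independent family on $H$, forcing $\rank r|_H = \infty$ and contradicting the hypothesis. For the quantitative bound, reduce to polynomial $r$ by Taylor truncation and write $r = \sum_{i=1}^{\rho} \epsilon_i |F_i|^2$ with linearly independent polynomials $F_i$ and signs $\epsilon_i \in \{\pm 1\}$. Let $V := \mathrm{span}(F_i)$ and split $V = V^+ \oplus V^-$ according to the signs; the positive-definite subforms $r^+ := \sum_{i : \epsilon_i = 1} |F_i|^2$ and $r^- := \sum_{i : \epsilon_i = -1} |F_i|^2$ satisfy $\rank r^\pm|_H = \dim V^\pm|_H$, so signature cancellations are deferred. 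Green's hyperplane restriction theorem, applied to the homogenizations of $V^\pm$, bounds $\dim V^{\pm}$ above by a Macaulay-type function of $\sup_H \dim V^{\pm}|_H$; combined with Gotzmann-type regularity (to control the degrees of the $F_i$), this yields a bound on $\dim V^{\pm}$ depending only on $n$ and $\sup_H \dim V^{\pm}|_H$, independent of the polynomial degrees. A signature-tracking argument, using Cauchy interlacing for restrictions of Hermitian forms, finally relates $\sup_H \dim V^{\pm}|_H$ back to $N = \sup_H \rank r|_H$, and one concludes $\rank r = \dim V^+ + \dim V^- \le R_{n-1,n}(N)$.

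The main obstacle is the interplay between the Hermitian signature and the vector-space structure in the indefinite case: cancellations between the positive and negative eigencomponents of $r$ after restriction to a hyperplane $H$ can make $\rank r|_H$ strictly smaller than $\dim V|_H$, so the chain from $\max_H \rank r|_H$ back to $\max_H \dim V^{\pm}|_H$ requires careful signature analysis. A second technical point is extracting a degree-independent bound from Green's HRT, whose Macaulay functions are intrinsically degree-sensitive; this is handled by combining Green's theorem with Gotzmann regularity. The extension from polynomial to real-analytic $r$ is then a routine approximation argument, given the trace-class structure of the coefficient operator and the already-established qualitative finiteness of $\rank r$.
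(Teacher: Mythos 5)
Your overall architecture (reduce to hyperplanes by induction on codimension, decompose the Hermitian form into holomorphic components, apply Green's theorem with a degree-stable Macaulay bound) matches the paper's, but there is a genuine gap at the central step, plus a false preliminary claim. The preliminary problem: your Baire-category argument asserts that infinitely many linearly independent eigenfunctions restrict to a linearly independent family on a generic hyperplane. This fails already for polynomial eigenfunctions, for dimension reasons: the $\binom{n+d}{d}$ monomials of degree at most $d$ restrict, on \emph{every} affine hyperplane, into a space of dimension $\binom{n-1+d}{d}$, so linear independence is necessarily destroyed once there are enough functions. Quantifying exactly how much the dimension can drop is the entire content of Green's theorem; there is no soft genericity shortcut, and in the paper the qualitative finiteness is obtained only as a byproduct of the quantitative bound.

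The central gap is the step you yourself flag as an ``obstacle'': passing from $N=\sup_H \rank r|_H$ back to $\sup_H \dim V^{\pm}|_H$. Since $\rank r|_H=\rank\bigl(T_H^*CT_H\bigr)$ with $C=C^+-C^-$, the positive and negative parts can cancel arbitrarily much under restriction, so $\rank r|_H$ gives no lower bound on $\dim V^{\pm}|_H$; Cauchy interlacing only bounds the inertia of a compression from \emph{above} by that of $C$, which is the useless direction, and Example~\ref{example:posnes} of the paper shows the cancellation is a real phenomenon. The device you are missing is the paper's polarization trick: writing $H=H_{\vec c}$, treat $\vec c$ and $\bar{\vec c}$ as independent variables, so that $\rank_{\C[c,\bar c]}\bigl((T^d_{\vec c})^*CT^d_{\vec c}\bigr)\leq\max_{\vec c}\rank_{\C}\bigl((T^d_{\vec c})^*CT^d_{\vec c}\bigr)=N$. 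Specializing $\bar{\vec c}$ generically turns the left-hand side into $\rank_{\C[c]}(D^*T^d_{\vec c})$ with $D=CT^d_{\vec c}$, a \emph{one-sided} restriction of a linear system to which Lemma~\ref{lemma:green} applies with no Hermitian cancellation; a second application to $CT^d_{\vec c}$ itself then gives $\rank C\leq K_n(K_n(N))$. Without this (or an equivalent replacement for the interlacing step) your chain of inequalities does not close; the rest of your outline (the PSD case, the Hilbert--Schmidt approximation, the induction on codimension via $T_L=T_ST_H$) is sound and parallels the paper.
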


%The proof works by reducing the result to
%a simpler statement about monomials that completely captures the
%combinatorics of the situation.
%  I think we should kill this sentence or say more....

%We say a set $\sL \subset G(m,n)$ is \emph{generic} if $\sL$
%is not contained in any proper complex subvariety of $G(m,n)$. With this notation, we also have the following corollary:

%\begin{cor} \label{cor:rankpos}
%Let $r(z,\bar{z})$ be a positive semidefinite nonzero real-analytic function defined
%in a connected neighborhood $U \subset \C^n$ of the origin, $n \geq 2$, such that the complexified
%series converges in $U \times U^*$ and let $1 \leq m \leq n-1$.
%Suppose that
%\begin{equation}
%\max_{L \in \sL} \,\rank r|_L < \infty
%\end{equation}
%where $\sL \subset G(m,n)$ is a generic subset such that the corresponding manifolds intersect $U$.
%Then $\rank r$ is finite, and moreover
%there exists a function
%$R_{m,n} \colon \N \to \N$ such that
%\begin{equation}
%\rank r \leq R_{m,n}\left(
%\max_{L \in \sG} \,\rank r|_L
%\right) .
%\end{equation}
%\end{cor}

%The positive semidefinitivity condition on $r$ is necessary for
%the bound on the rank of $r$; see Example \ref{example:posnes}.
%I COMMENTED OUT SOMETHING HERE. SEE TEX FILE FOR REASONING AND IF YOU WANT TO UNDO.
%%%%I COMMENTED OUT BECAUSE IT SEEMS TO ME REDUNDANT. THE EXAMPLE SAYS WHAT WE'RE REPEATING BELOW.
%In fact if $r$ is not positive definite, the rank of $r$ may
%be infinite while rank of $r|_L$ is bounded on all $L \in \sL$).

It is worthwhile to state the theorem for bihomogeneous polynomials.
A polynomial $r(z,\bar{z})$ is said to be \emph{bihomogeneous}
of \emph{bidegree} $(d,d)$
if $r(tz,\bar{z}) = r(z,t\bar{z}) = t^d r(z,\bar{z})$.
When $r$ is a
polynomial then the coefficient matrix is simply a finite matrix.  The
function $R_{m,n}$ in the following theorem is the
same as above.

\begin{thm} \label{finiterankres}  
Let $n \geq 2$, $z\in \C^{n+1}$, and $1 \leq m \leq n-1$.  Then there exists a function
$R_{m,n} \colon \N_0 \to \N_0$ such that for any bihomogeneous polynomial $r(z,\bar{z})$, 
\begin{equation}
\rank r \leq R_{m,n}\left(
\max_{L \in G(m+1,n+1)} \,\rank r|_L
\right) .
\end{equation}
%Let $r(z,\bar{z})$ be a nonzero bihomogeneous polynomial defined for $z\in \C^{n+1}$, $n \geq 2$. Let $1 \leq m \leq n-1$.
%Then there exists a function
%$R_{m,n} \colon \N \to \N$ such that
%\begin{equation}
%\rank r \leq R_{m,n}\left(
%\max_{L \in G(m,n)} \,\rank r|_L
%\right) .
%\end{equation}
\end{thm}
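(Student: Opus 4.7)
The plan is to deduce Theorem \ref{finiterankres} as a direct corollary of Theorem \ref{thm:rankres} via dehomogenization. Let $r(z,\bar z)$ be bihomogeneous of bidegree $(d,d)$ on $\bC^{n+1}$. I would pass to the affine chart $z_0 = 1$ by setting
\[
\tilde r(w,\bar w) := r(1,w_1,\ldots,w_n,\,1,\bar w_1,\ldots,\bar w_n),
\]
which is a polynomial on $\bC^n$ and hence satisfies the convergence hypothesis of Theorem \ref{thm:rankres} with $U = \bC^n$.

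The first substantive step is to verify that $\rank \tilde r = \rank r$. Since $r$ is bihomogeneous of bidegree $(d,d)$, its coefficient matrix $[c_{\alpha\beta}]$ is supported on pairs of multi-indices with $|\alpha| = |\beta| = d$. The map $\alpha \mapsto (\alpha_1,\ldots,\alpha_n)$ is a bijection between $\{\alpha \in \bN_0^{n+1} : |\alpha|=d\}$ and $\{\alpha' \in \bN_0^{n} : |\alpha'| \leq d\}$, and under this relabeling the nonzero block of the coefficient matrix of $r$ is precisely the coefficient matrix of $\tilde r$. Hence the two matrices have the same rank.

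The second step is to identify restrictions. An affine $m$-plane $L' \subset \bC^n$ is the image under the chart $w \mapsto [1:w]$ of a unique linear $(m+1)$-plane $L \subset \bC^{n+1}$, namely the one not contained in $\{z_0 = 0\}$ whose projectivization contains $L'$. Since $r|_L$ is itself bihomogeneous on $L \cong \bC^{m+1}$, the dehomogenization argument of the previous paragraph applied inside $L$ gives $\rank r|_L = \rank \tilde r|_{L'}$. Since every affine $m$-plane in $\bC^n$ arises this way, we obtain
\[
\max_{L' \in G_{m,n}} \rank \tilde r|_{L'} \;\leq\; \max_{L \in G(m+1,n+1)} \rank r|_{L}.
\]

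Now I would apply Theorem \ref{thm:rankres} to $\tilde r$ and combine the two displays to conclude. Replacing $R_{m,n}$ with its nondecreasing envelope $k \mapsto \max_{j \leq k} R_{m,n}(j)$ if necessary (which preserves all conclusions of Theorem \ref{thm:rankres}), we obtain the desired bound with the same function $R_{m,n}$. There is no serious obstacle here: the only things to verify are the coefficient-matrix identification and the compatibility of plane-restriction with dehomogenization, both of which are routine bookkeeping once the chart is set up. The whole content of Theorem \ref{finiterankres} is carried by Theorem \ref{thm:rankres}.
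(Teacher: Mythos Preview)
Your argument is correct: the dehomogenization is set up carefully, the rank identification $\rank r = \rank \tilde r$ via the bijection $\alpha \leftrightarrow \alpha'$ is right, and the correspondence between linear $(m{+}1)$-planes $L\subset\C^{n+1}$ not lying in $\{z_0=0\}$ and affine $m$-planes $L'\subset\C^n$ (together with $\rank r|_L = \rank \tilde r|_{L'}$) is handled properly. Passing to the monotone envelope of $R_{m,n}$ is harmless.

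However, your route is the reverse of the paper's. In the paper, Theorem~\ref{finiterankres} is \emph{not} a corollary of Theorem~\ref{thm:rankres}; it is proved first and directly. The key step is Lemma~\ref{lemma:hyperplanesalg}, which handles the hyperplane case $m=n-1$ by writing $r=\langle C\sZ_{n,d},\sZ_{n,d}\rangle$, polarizing, and applying Green's restriction bound (Lemma~\ref{lemma:green}) twice---once to $CT^d_{\vec c}$ and once to its conjugate transpose---yielding $\rank r\le K_n(K_n(k))$. The general $m$ then follows by factoring $T_L=T_ST_H$ and inducting on codimension. Only afterward is the real-analytic Theorem~\ref{thm:rankres} established, by reducing infinite matrices to the same algebraic situation via the block decomposition~\eqref{eq:THdecomp} and a limiting argument with Hilbert--Schmidt operators. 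So in the paper's architecture the bihomogeneous case is the algebraic core on which the analytic case is built, not the other way around. Your reduction is a legitimate shortcut once Theorem~\ref{thm:rankres} is available, but it leans on the harder result to prove the easier one and hides where Green's theorem actually enters; the paper's direct proof also makes the explicit estimate $R_n(k)\le K_n\!\bigl(K_n(k)\bigr)$ visible.
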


%\begin{cor} \label{cor:rankpos}
%Let $r(z,\bar{z})$ be a positive semidefinite nonzero bihomogeneous polynomial defined for $z\in \C^{n+1}$, $n \geq 2$. Let $1 \leq m \leq n-1$.
%Then there exists a function
%$R_{m,n} \colon \N \to \N$ such that
%\begin{equation}
%\rank r \leq R_{m,n}\left(
%\max_{L \in \sL} \,\rank r|_L
%\right) .
%\end{equation}
%where $\sL \subset G(m,n)$ is a generic subset.
%\end{cor}

%Note that $r(z,\bar{z}) \geq 0$ for all $z \in \C^{n+1}$
%is not enough to conclude that
%the matrix of coefficients is positive semidefinite.  On the other hand,
%if the matrix is positive semidefinite, then $r(z,\bar{z}) \geq 0$
%is true for all $z \in \C^{n+1}$.  See \cite{DV} for more on
%positivity conditions.

As an application of Theorem \ref{thm:rankres}, we prove a rigidity result for mappings of 
hyperquadrics.
The hyperquadric $Q(a,b) \subset \C^{a+b}$ is the set defined by
\begin{equation}
Q(a,b) := \Bigl\{ z \in \C^{a+b} : \sum_{j=1}^a \abs{z_j}^2 - 
\sum_{j=a+1}^{a+b} \abs{z_j}^2 = 1 \Bigr\} .
\end{equation}
When $b=0$, then $Q(a,0)$ is simply the sphere.
Hyperquadrics and spheres are the CR analogues of
flat euclidean space from Riemannian geometry.
In CR geometry, however, there is no analogue of the Nash embedding theorem.
Therefore, a natural question in CR
geometry is to study the CR mappings
$f \colon U \to Q(A,B)$ for a CR manifold $U$.  In this paper
we take an open subset $U \subset Q(a,b)$.
We study hyperquadrics $Q(a,b)$ that are not equivalent to the sphere.
By a theorem of Lewy~\cite{Lewy:ext}
a CR function on $Q(a,b)$ extends to a holomorphic function of both sides of $Q(a,b)$.
It is therefore enough to consider real-analytic CR
mappings, or in other words restrictions of holomorphic mappings.
Therefore we study holomorphic
mappings of a neighborhood of $U$ in $\C^{a+b}$ to $\C^{A+B}$ that
take $U \subset Q(a,b)$ to $Q(A,B)$.  See the books \cites{BER:book, DAngelo:CR} for
more background information.

When considering mappings between hyperquadrics, we need to consider
problems of positivity rather than just rank.  See
\cites{DV, D:hilbertadvances} for more on positivity conditions.
%  Do we really need to include this paragraph?

After a linear fractional
change of coordinates we can
assume that $a > b$ and $A > B$.  It is possible that the change of
coordinates has a pole on $U$, but as $Q(a,b)$ is a real hypersurface,
there still has to exist a perhaps smaller dense open set $V \subset U$ such that 
$f \colon V \to Q(A,B)$ is a CR mapping.

The study of CR mappings between spheres has a long history.
When the target is also a sphere ($B=0$)
and the codimension $A-a$ is small, then certain strong rigidity results can be
proven; see for example \cite{HJX} and the references within.  Forstneri\v{c}
proved that sufficiently smooth CR mappings of spheres must be rational of
degree bounded by a constant depending only on the dimensions involved
\cite{Forstneric}.

When the target dimension is large, there is less rigidity.
When both the source and target are spheres, increasing the dimension of the target
always adds new CR mappings as long as the dimension is large enough;
see \cite{DL:complex}.  Furthermore, if the
source is a sphere and the target is a hyperquadric not equivalent to a
sphere, then for large target dimension not only do we always get new
mappings, but we get new rational mappings of arbitrarily large degree;
see \cite{DL:hermsym}.

We therefore concentrate on the case when $b,B \geq 1$.
Baouendi and Huang \cite{BH} proved
that if $b=B$, then $f$ must be a linear
embedding.
Later, Baouendi, Ebenfelt, and Huang \cite{BEH:hq} proved that 
if $B < 2b$, then after a change of coordinates
$f$ can be written as
\begin{equation} \label{beh:eq}
(z_1,\ldots,z_n) \mapsto
(z_1,\ldots,z_a,\psi(z),0,z_{a+1},\ldots,z_{a+b},\psi(z),0) ,
\end{equation}
where an arbitrary CR mapping $\psi$ and $0$ are vector-valued functions with the
right number of components.
In particular, unless $a=A$ and $b=B$, $f(U)$ is contained in a complex
hyperplane, where by a \emph{complex hyperplane} we mean a complex affine
manifold of complex codimension one.
The mapping \eqref{beh:eq} can be written, after an affine change of coordinates, as
the identity mapping direct sum an arbitrary
CR mapping going into a lower dimensional ambient space.

For any mapping $f$ of hyperquadrics,
if $f(U)$ lies in a complex hyperplane, then we apply an affine change of coordinates
and obtain a mapping $\tilde{f}$ from $U$ to $Q(A',B') \times \C^k$ for some
$A' \leq A$
and $B' \leq B$.  We can study the first $A'+B'$ components of $\tilde{f}$
as a mapping from $Q(a,b)$ to $Q(A',B')$,
as the last $k$ components of $\tilde{f}$ are arbitrary.  Therefore, it is
natural to study those mappings where $f(U)$ is not contained in a complex
hyperplane.

\begin{thm} \label{thm:rigidity}
Let $a >b \geq 1$, $U \subset Q(a,b)$ be a connected open set, and 
$f \colon U \to Q(A,B)$ be a real-analytic CR mapping such that
$f(U)$ does not lie in a complex hyperplane, then
\begin{equation}
A \leq N(a,b,B) ,
\end{equation}
where $N = N(a,b,B)$ is a constant depending only on $a$, $b$, and $B$.
\end{thm}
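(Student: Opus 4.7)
The plan is to recast the rigidity statement as a rank bound on a single Hermitian form and then apply Theorem~\ref{thm:rankres}. Working near a base point $p_0 \in U$ at which $f$ is holomorphic, set
\begin{equation*}
\phi(z, \bar z) := \sum_{j=1}^{A} |f_j(z)|^2 - \sum_{j=A+1}^{A+B} |f_j(z)|^2 - 1 .
\end{equation*}
Since $f(U) \subset Q(A,B)$, the germ $\phi$ vanishes on $Q(a,b)$ near $p_0$, so it factors as $\phi = (\|z\|^2_{a,b} - 1)\, g(z,\bar z)$ for a real-analytic germ $g$. The hypothesis that $f(U)$ is not contained in a complex affine hyperplane is equivalent to $\bC$-linear independence of the germs $\{1, f_1, \dots, f_{A+B}\}$ at $p_0$, which together with the diagonal signature $(A, B+1)$ of $\phi$ forces $\rank \phi = A+B+1$.

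Applied to $\phi$ in $n := a+b$ variables with $1 \leq m \leq n-1$ still free to be chosen, Theorem~\ref{thm:rankres} gives
\begin{equation*}
A+B+1 \;=\; \rank \phi \;\leq\; R_{m,n}\Bigl(\max_{L \in G_{m,n}} \rank \phi|_L\Bigr),
\end{equation*}
so it suffices to supply $m$ and a constant $K = K(a,b,B,m)$, independent of $A$, with $\rank \phi|_L \leq K$ for every affine $m$-plane $L$ meeting the domain of $\phi$. The conclusion will then follow with $N(a,b,B) := R_{m,n}(K) - B - 1$.

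For each such $L$, after an affine change of coordinates, the intersection $Q(a,b) \cap L$ is either empty, all of $L$ (in which case $f|_L$ maps $L$ entirely into $Q(A,B)$ and $\phi|_L \equiv 0$), or a hyperquadric $Q(a',b')$ in $L$ with $a' \leq a$ and $b' \leq b$. In the third case $f|_L$ is a real-analytic CR mapping $Q(a',b') \to Q(A,B)$, and the Baouendi--Ebenfelt--Huang theorem \cite{BEH:hq} applies as soon as $B < 2b'-1$: after an affine change on $\bC^{A+B}$, the mapping $f|_L$ takes the normal form \eqref{beh:eq}, from which a direct computation gives $\|f|_L\|^2_{A,B} - 1 = \|z\|^2_{a',b'} - 1$ on $L$, of rank at most $m+1$. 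Choosing $m$ so that the generic $L$ realizes $b' = b$ (for example $m = 2b+1$ with linear part of signature $(b+1,b)$), the bound $\rank \phi|_L \leq m+1$ holds for $L$ in a Zariski-open set, and by lower semi-continuity of rank in $L$ it propagates to every $L$; thus $K = m+1$ works whenever $B < 2b-1$.

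The main obstacle is the complementary regime $B \geq 2b-1$, where no affine subspace pushes $f|_L$ into the BEH range. The intended resolution is an induction on the triple $(B,a,b)$ in lexicographic order: when $f|_L$ has image contained in a complex affine hyperplane of $\bC^{A+B}$, it factors through a hyperquadric $Q(A',B')$ with $A'+B' < A+B$, strictly reducing either $A$ or $B$ and invoking the inductive hypothesis; the terminal cases are handled either by the previous paragraph's BEH argument or by the Baouendi--Huang linearization theorem \cite{BH} when $B = b$, supplying the uniform bound $K$ needed above.
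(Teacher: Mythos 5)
Your setup is identical to the paper's: form $r=\norm{f}^2-\norm{g}^2-1$, note that linear independence of the components of $(f,g,1)$ gives signature pair $(A,B+1)$ and rank $A+B+1$, and try to bound that rank by a restriction theorem. The gap is in how you bound the restricted ranks. Theorem~\ref{thm:rankres} needs $\rank \phi|_L \leq K(a,b,B)$ for \emph{every} affine $m$-plane $L$, and you obtain such a bound only via Baouendi--Ebenfelt--Huang, which applies only when $B < 2b'-1 \leq 2b-1$ --- precisely the small-codimension regime where rigidity was already known and where the present theorem has no new content. In the main regime $B \geq 2b-1$ the approach is essentially circular: for a \emph{generic} $L$, Green's theorem forces $\rank \phi|_L$ to be large whenever $\rank \phi = A+B+1$ is large, so a uniform bound on $\rank\phi|_L$ over all $L$ that is independent of $A$ is more or less equivalent to the conclusion you are trying to prove. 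The closing induction on $(B,a,b)$ is not a proof: there is no reason $f|_L$ should map into a hyperplane, the reduction is not shown to terminate in a uniform bound, and the terminal cases (BEH, Baouendi--Huang) again live only in the small-codimension regime.

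The idea you are missing is to use the complex planes that lie \emph{inside} the source hyperquadric rather than generic ones, and to apply the restriction theorem to the positive part alone. Since $a>b\geq 1$, $Q(a,b)$ contains affine complex $b$-planes $L_V$ parametrized by $a\times(b+1)$ matrices $V$ with orthonormal columns, and the proof of Lemma~\ref{lemma:rigidsigpair} shows this family is generic (Zariski-dense) in $G_{b,a+b}$. On each such $L$ the identity $\norm{f}^2=\norm{g}^2+1$ holds, so the restriction of the \emph{positive semidefinite} form $\norm{f}^2$ to $L$ has rank at most $B+1$ --- no mapping theorem is needed. One then invokes not Theorem~\ref{thm:rankres} but its positive semidefinite variant, Lemma~\ref{lemma:finiterankrespos}, which (unlike the indefinite version; see Example~\ref{example:posnes}) requires the rank bound only on a generic family of planes, and concludes $A=\rank \norm{f}^2 \leq K_{b,a+b}(B+1)$. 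Positivity is essential here: your indefinite $\phi$ restricts to the zero form on every $L\subset Q(a,b)$, so these planes give no information about $\phi$ itself, and a bound on a merely generic family does not control the rank of an indefinite form.
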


Note that the hypotheses on $Q(a,b)$ mean that $Q(a,b)$ is
not equivalent to a sphere.  If $Q(a,b)$ were
equivalent to a sphere, there is no rigidity.  In this sense, the
theorem is optimal.

We are interested in what happens when $B \geq 2b$.  In this
situation, there do exist nontrivial mappings that do not map to 
a hyperplane.  See sections
\ref{section:construct} and \ref{section:stability} for a general
method of constructing mappings.

The bound that we obtain on $A$ is not sharp.  For example when $a=2$ and
$b=1$, it is possible to use the method in this paper to obtain an explicit
bound
\begin{equation}
A \leq K_{1,3}(B+1)=K_3(K_2(B+1)) .
\end{equation}
See section \ref{section:greens} for the definition of $K_n(k)$.
When $B=1$ we obtain $A \leq 4$,
though we know that $A = 2$ by the result of Baouendi-Huang.  We dispense
with finding explicit formulas for $N(a,b,B)$; however, we briefly describe
the asymptotics.  In section \ref{section:racase} we show that
$K_n(k) \leq \widetilde{K}_n(k)=
\frac{n-1}{n} \, k^{\frac{n}{n-1}} + \frac{1}{n} \,k$,
%\frac{k(k+1)}{2}$,
and furthermore, in section \ref{section:crmaps}, we
show
\begin{equation}
\begin{split}
A\leq N(a,b,B) & \leq  K_{a+b} \circ K_{a+b-1} \circ \cdots \circ
K_{b+1}(B+1) \\
& \leq %T^a(B+1)
\widetilde{K}_{a+b} \circ \widetilde{K}_{a+b-1} \circ \cdots \circ
\widetilde{K}_{b+1}(B+1) \\
%T \circ T \circ \cdots \circ T(B+1)
%& = \frac{1}{2^{2^a-1}} B^{2^a} + \text{(lower order terms in $B$)} .
& =
%\left(
%\prod_{\ell=0}^{a-1}
%\left(\frac{a+b-1-\ell}{a+b-\ell}\right)^{\frac{(a+b)(a+b-1)\cdots(a+b-\ell)}{(a+b-1)(a+b-2)\cdots(a+b-1-\ell)}}
%\right)
%{B}^{\frac{(a+b)(a+b-1)\cdots(b+1)}{(a+b-1)(a+b-2)\cdots(b)}}
\left(
\prod_{\ell=0}^{a-1}
\left(\frac{a+b-1-\ell}{a+b-\ell}\right)^%
{\frac{a+b}{a+b-\ell}}
\right)
{B}^{\frac{a+b}{b}}
%\\
%& \qquad
+
\text{(lower order terms in $B$)} .
\end{split}
\end{equation}
Thus the bound $N(a,b,B)$ is itself bounded by a polynomial in $B$ of degree
at most $\frac{a+b}{b}$.

Unlike in the sphere to sphere case, there do exist non-rational mappings,
and rational mappings of arbitrarily high degree.
In
section
\ref{section:stability}, we 
prove a stability result about existence of hyperquadric mappings 
that do not map into hyperplanes as long as both $A$ and $B$ are sufficiently
large and comparable.  The proof is constructive and all constructed mappings are
monomial; that is, all components are single monomials.

The stability result discussed above shows that Theorem~\ref{thm:rigidity}
is optimal in the sense that super-rigidity as studied by
Baouendi-Huang and 
Baouendi-Ebenfelt-Huang only appears in small codimension.  In general
the best constant $N$ must grow asymptotically at least as a linear function
of $B$.  The methods explored in this paper do not readily give the
best bound for $N$ as we have mentioned above, but we do obtain that
$N$ asymptotically grows no faster than a polynomial in $B$.

Finally, we also note that the rigidity result extends when the target
is an infinite dimensional hyperquadric $Q(\infty,B)$, where 
$B \in \N_0 \cup \{ \infty \}$; see section~\ref{section:hilbert}.  Lempert~\cite{Lempert}
shows that any strictly pseudoconvex real-analytic compact hypersurface can be
mapped into a sphere in a possibly infinite dimensional Hilbert space.
We show that if $Q(a,b)$ is not equivalent to a sphere, then for a CR
mapping $Q(a,b) \to Q(\infty,B)$ where the image is not contained in a
hyperplane we must have $B = \infty$.  However, we
also construct mappings whose image is not contained in a hyperplane
from a real hypersurface $M \subset \C^n$
with an indefinite nondegenerate Levi-form to
$Q(\infty,B)$ for finite $B$.

The authors would like to acknowledge John D'Angelo for many useful
conversations on the subject and suggestions related to this project.
The second author would also like to thank Peter Ebenfelt for useful
discussions on this subject.  The authors are greatly indebted to the 
referee who pointed out several errors in the presentation of the results and
helped uncover a gap in the proof of
Lemma~\ref{lemma:finiterankrespos}, which has been fixed.
Finally the authors would like to acknowledge MSRI and AIM for holding workshops on the subject of
CR complexity, which the authors attended and which led to the present
project.

%%%%%%%%%%%%%%%%%%%%%%%%%%%%%%%%%%%%%%%%%%%%%%%%%%%%%%%%%%%%%%%%%%%%%%%%%%%%%

\section{The basic setup} \label{section:thesetup}

We begin by recalling the relevant definitions and proving some basic
results about restrictions of real-valued polynomials.  Let
$\langle \cdot, \cdot \rangle$ denote the standard Hermitian inner product.
Let $A$ be a Hermitian symmetric matrix, then we call $\langle A z, z
\rangle$ a \emph{Hermitian form} where $z\in \C^{n+1}$.

Let 
\begin{equation}
\sZ_{n,d} \colon \bC^{n+1} \to \bC^{\binom{n+d}{d}}
\end{equation}
be the mapping whose components are all the degree $d$ monomials in $n+1$
variables $z_0,z_1,\ldots,z_n$.  That is, $\sZ_{n,d}$ is the mapping given by 
\begin{equation}
z \mapsto (z_0^d, z_0^{d-1} z_1, \ldots, z_n^d).
\end{equation}  
The mapping $\sZ_{n,d}$ is called the \emph{Veronese mapping in $n+1$ variables of degree $d$}.  

\begin{defn}
Let $L$ be an $(m+1)$-dimensional subspace of $\C^{n+1}$.  Let $E_L$ be a linear
embedding of $L$ into $\bC^{n+1}$.  Define $S^d_{E_L}$ to be the linear
mapping that makes the following diagram commute:
\begin{equation}
\begin{CD}
\bC^{m+1}     @>E_L>> \bC^{n+1}\\
@VV {\sZ_{m,d}} V       @VV {\sZ_{n,d}} V\\
\bC^{\binom{m+d}{d}}     @>S^d_{E_L}>> \bC^{\binom{n+d}{d}}.\\
\end{CD}
\end{equation}
Define $T^d_{E_L}$ to be the matrix associated to the linear mapping $S_{E_L}^d$,
then
\begin{equation}
\sZ_{n,d} \circ E_L = T^{d}_{E_L}\sZ_{m,d} .
\end{equation}  
We often abbreviate $T^d_L$ for $T^d_{E_L}$ when the particular choice
of embedding is not crucial.  
\end{defn}

Note that $T^d_{E_L}$ is not unique; it depends on the particular embedding
$E_L$.  However, if we have $E_L$ and $F_L$ two embeddings of
$L$ then we have $T^d_{E_L} = \psi \circ T^d_{F_L}$, where $\psi$ is an
isomorphism.

\begin{lemma}  \label{lemma:changecoord}
Let $r(z,\bar{z})$ be a bihomogeneous polynomial defined for $z \in
\C^{n+1}$.  If $\phi$ is a linear change of coordinates of $\bC^{n+1}$, then
\begin{equation}
\rank r \circ \phi = \rank r.
\end{equation}
\end{lemma}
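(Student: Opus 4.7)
The plan is to express $r$ as a Hermitian form on the image of the Veronese map and then use the commutative diagram defining $T^d_\phi$ (with $L = \bC^{n+1}$ and $E_L = \phi$, so $m = n$) to transfer the change of coordinates $\phi$ on $\bC^{n+1}$ into a change of coordinates on $\bC^{\binom{n+d}{d}}$.

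First I would write $r$ in its Hermitian-form presentation. Since $r$ is bihomogeneous of bidegree $(d,d)$, every monomial in its expansion has the form $z^\alpha \bar z^\beta$ with $|\alpha| = |\beta| = d$, so there is a uniquely determined Hermitian matrix $C$ of size $\binom{n+d}{d}$ such that
\begin{equation}
r(z,\bar z) = \bigl\langle C \,\sZ_{n,d}(z),\, \sZ_{n,d}(z)\bigr\rangle,
\end{equation}
and by definition $\rank r = \rank C$. Next I would apply the commutative diagram to the embedding $\phi \colon \bC^{n+1} \to \bC^{n+1}$, obtaining a (square) matrix $T := T^d_\phi$ with
\begin{equation}
\sZ_{n,d} \circ \phi = T\, \sZ_{n,d}.
\end{equation}
Substituting this identity into the Hermitian-form presentation gives
\begin{equation}
r(\phi(z),\overline{\phi(z)}) = \bigl\langle C T\,\sZ_{n,d}(z),\, T\,\sZ_{n,d}(z)\bigr\rangle = \bigl\langle T^{*}CT\,\sZ_{n,d}(z),\, \sZ_{n,d}(z)\bigr\rangle,
\end{equation}
so the Hermitian matrix associated with $r\circ\phi$ is $T^{*}CT$, and therefore $\rank(r\circ\phi) = \rank(T^{*}CT)$.

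The remaining point, which I view as the only real content of the lemma, is to show that $T = T^d_\phi$ is invertible. This follows by functoriality of the Veronese construction: applying the diagram to $\phi$ and to $\phi^{-1}$ and composing, one has $\sZ_{n,d} = \sZ_{n,d}\circ \phi \circ \phi^{-1} = T^d_\phi\, T^d_{\phi^{-1}}\, \sZ_{n,d}$, and since the image of $\sZ_{n,d}$ linearly spans $\bC^{\binom{n+d}{d}}$ (the monomials of degree $d$ are linearly independent as polynomials, and the span of $\{\sZ_{n,d}(z) : z \in \bC^{n+1}\}$ is therefore the whole target space), this forces $T^d_\phi T^d_{\phi^{-1}} = I$. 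Hence $T$ is invertible and $\rank(T^{*}CT) = \rank C$, completing the proof. The main thing to be careful about is precisely this spanning statement justifying the invertibility of $T^d_\phi$; everything else is formal.
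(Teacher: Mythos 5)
Your proof is correct, but it follows a genuinely different route from the paper's. The paper disposes of this lemma by deferring to the more general Lemma~\ref{lemma:rankchangecoord}, whose proof writes $r(z,\bar z)=\sum_{j=1}^k \epsilon_j\abs{f_j(z)}^2$ via the spectral decomposition of the coefficient matrix, observes that $r\circ\phi=\sum_j\epsilon_j\abs{f_j(\phi(z))}^2$ is a representation of $r\circ\phi$ with the same number of terms, and invokes the minimality of the rank among all such signed sum-of-squares representations (established just before that lemma) to conclude $\rank r\circ\phi\le\rank r$, finishing by symmetry. You instead argue entirely within the restriction-matrix formalism: the coefficient matrix of $r\circ\phi$ is $T^*CT$ with $T=T^d_\phi$, and $T$ is invertible by functoriality of the Veronese construction together with the fact that the image of $\sZ_{n,d}$ spans $\bC^{\binom{n+d}{d}}$ (equivalently, that the degree-$d$ monomials are linearly independent functions), so conjugation by the invertible $T$ preserves rank. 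Both arguments are sound, and your spanning justification is exactly the point that needs care. Your version is self-contained, purely finite-dimensional, and meshes naturally with the $T^d_{E_L}$ machinery the paper sets up around Lemma~\ref{lemma:welldefined}; the paper's version buys generality, since the same argument handles real-analytic $r$ of possibly infinite rank and biholomorphic (not merely linear) changes of coordinates, which is what is actually needed later in Section~\ref{section:racase}.
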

\begin{proof}
We prove a more general fact in Lemma \ref{lemma:rankchangecoord}.
\end{proof}

The lemma shows that rank of $r|_{E_L}$ does not depend on the
choice of embedding $E_L$.  We record this observation in the following lemma.

\begin{lemma}  \label{lemma:welldefined}
Let $r(z,\bar{z})=\langle C \sZ_{n,d}, \sZ_{n,d} \rangle$, where $C$ is a
Hermitian matrix, and suppose $n \geq 1$.  Let $L$ be an
$(m+1)$-dimensional subspace of $\bC^{n+1}$.  Let $E_L$ and
$F_L$ be two embeddings of $L$ into $\bC^{n+1}$.  Then 
\begin{equation}
\rank T^{*}_{E_L}C T_{E_L}=\rank T^{*}_{F_L}C T_{F_L}.
\end{equation}
\end{lemma}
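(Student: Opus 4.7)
The plan is to reduce the claim to Lemma \ref{lemma:changecoord} by observing that any two linear embeddings of the same subspace differ by a linear automorphism of $\C^{m+1}$, and the Veronese construction intertwines this automorphism with the change of the matrix $T^d$.

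First, I would note that since $E_L$ and $F_L$ are both linear embeddings of the \emph{same} subspace $L$ with image $L \subset \C^{n+1}$, the composition $E_L^{-1} \circ F_L$ (where $E_L^{-1}$ denotes the inverse of $E_L$ viewed as an isomorphism onto $L$) is a linear automorphism $\phi$ of $\C^{m+1}$. Thus $F_L = E_L \circ \phi$ with $\phi$ invertible.

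Next, I would introduce the restrictions $r_{E}(w,\bar w) := r(E_L(w), \overline{E_L(w)})$ and $r_{F}(w,\bar w) := r(F_L(w), \overline{F_L(w)})$, which are bihomogeneous polynomials of bidegree $(d,d)$ on $\C^{m+1}$. Using the defining identity $\sZ_{n,d} \circ E_L = T^d_{E_L}\sZ_{m,d}$, one computes
\begin{equation}
r_E(w,\bar w) = \langle C\, T^d_{E_L}\sZ_{m,d}(w),\, T^d_{E_L}\sZ_{m,d}(w)\rangle = \langle T^{d*}_{E_L} C\, T^d_{E_L}\sZ_{m,d}(w),\, \sZ_{m,d}(w)\rangle,
\end{equation}
and analogously for $F$. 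Since the components of $\sZ_{m,d}$ are a basis for the degree $d$ monomials in $m+1$ variables, $T^{d*}_{E_L}CT^d_{E_L}$ is precisely the coefficient matrix of $r_E$ in the monomial basis, so $\rank r_E = \rank T^{d*}_{E_L}CT^d_{E_L}$, and similarly $\rank r_F = \rank T^{d*}_{F_L}CT^d_{F_L}$.

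Finally, the relation $F_L = E_L \circ \phi$ gives $r_F = r_E \circ \phi$, so Lemma \ref{lemma:changecoord} applied to the bihomogeneous polynomial $r_E$ and the linear change of coordinates $\phi$ yields $\rank r_F = \rank r_E$, which is exactly the desired equality. The only minor subtlety to watch for is confirming the identification of $\rank r_E$ with $\rank T^{d*}_{E_L}CT^d_{E_L}$, but this is immediate from the fact that the monomial entries of the Veronese map are linearly independent. With that in hand, there is no real obstacle; the whole proof is essentially a functoriality argument passing through the already-proved Lemma \ref{lemma:changecoord}.
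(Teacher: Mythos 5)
Your proof is correct and follows essentially the same route as the paper, which presents this lemma as an immediate consequence of Lemma \ref{lemma:changecoord}: two embeddings of $L$ differ by a linear automorphism of $\C^{m+1}$, and the rank of the restricted form is invariant under that change of coordinates. The identification of $T^{d*}_{E_L}CT^d_{E_L}$ with the coefficient matrix of $r\circ E_L$ is exactly the intended bookkeeping.
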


\begin{defn}  \label{defofTnd}
Let $H_{\vec{c}}$ denote the hyperplane in $\C^{n+1}$ given by the following defining equation: 
\begin{equation}
z_{0}=c_1 z_1+ \cdots +c_{n} z_n ,
\end{equation}
where $\vec{c} = (c_1,c_2,\ldots,c_n)$.
We embed 
$H_{\vec{c}}$ using $E_{H_{\vec{c}}}(z_1,\ldots,z_{n}) =
(\sum_{j=1}^n c_jz_{j},z_1,\ldots,z_{n},)$.

We abbreviate the notation for the restriction matrix $T_{H_{\vec{c}}}^d$ by
$T_{\vec{c}}^d$.  
\end{defn}

The restriction matrix $T_{\vec{c}}^d$ is of size $\binom{n+d}{d}\times
\binom{n+d-1}{d}$.

To study the maximum rank of restrictions of
bihomogeneous polynomials to hyperplanes, it suffices to study the
hyperplanes of the form $H_{\vec{c}}$.

\begin{lemma} \label{lemma:restrictgraphs}
Let $r(z,\bar{z})$ be a bihomogeneous polynomial defined for $z \in \C^{n+1}$
 and suppose $n \geq 1$.
Then 
\begin{equation}
\max_{H} \,\rank r|_H = \max_{\vec{c} \in \C^n} \, \rank r|_{H_{\vec{c}}} ,
\end{equation}
where $H$ ranges over all complex hyperplanes through the origin.
\end{lemma}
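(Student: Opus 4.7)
The plan is to combine a trivial inclusion with a density and lower semicontinuity argument. The inequality $\max_{\vec{c} \in \bC^n} \rank r|_{H_{\vec{c}}} \leq \max_H \rank r|_H$ holds automatically because every $H_{\vec{c}}$ is a hyperplane through the origin. The substantive content is the reverse inequality.

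For that, I would parameterize the hyperplanes through the origin as the projective space $\bP((\bC^{n+1})^*) \cong \bP^n$: a hyperplane with defining equation $a_0 z_0 + \cdots + a_n z_n = 0$ corresponds to $[a_0 : \cdots : a_n]$. The hyperplanes $H_{\vec{c}}$ are precisely those with $a_0 \neq 0$, so they form an open and dense subset. I would next show that $H \mapsto \rank r|_H$ is lower semicontinuous on $\bP^n$. On the affine chart $\{a_0 \neq 0\}$ parameterized by $\vec{c}$, the embedding $E_{H_{\vec{c}}}$ depends polynomially on $\vec{c}$, hence so does $T^d_{\vec{c}}$, and therefore the Hermitian matrix $(T^d_{\vec{c}})^* C T^d_{\vec{c}}$ associated to $r|_{H_{\vec{c}}}$ has entries polynomial in $\vec{c}$ and $\bar{\vec{c}}$. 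Since the rank of a matrix is lower semicontinuous in its entries---a non-vanishing $k \times k$ minor remains non-vanishing in a neighborhood---rank is lower semicontinuous on this chart. The analogous argument on the other charts $\{a_i \neq 0\}$, combined with Lemma \ref{lemma:welldefined} (which guarantees the rank is intrinsic to $H$ and independent of the chosen embedding), yields global lower semicontinuity on $\bP^n$.

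With these ingredients in place the conclusion is automatic. Let $k = \max_H \rank r|_H$ and pick $H_0$ attaining this maximum. The set $\{H : \rank r|_H \geq k\}$ is open and non-empty, and so by density of $\{H_{\vec{c}}\}$ it meets this set, producing $\vec{c}$ with $\rank r|_{H_{\vec{c}}} \geq k$, which forces equality. The only subtle point is the global lower semicontinuity across charts of $\bP^n$; once Lemma \ref{lemma:welldefined} is invoked this is straightforward, and the rest is a standard semicontinuity/density combination.
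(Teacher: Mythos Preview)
Your proof is correct and follows essentially the same approach as the paper's: both arguments use that the hyperplanes $H_{\vec{c}}$ are dense in $G(n,n+1)$ and that the rank of the restriction is lower semicontinuous (equivalently, that ``rank $\leq k$'' is a closed algebraic condition on the parameters). Your version simply makes explicit the lower semicontinuity and chart-patching via Lemma~\ref{lemma:welldefined}, whereas the paper compresses this into the single remark that the rank bound is a ``holomorphic condition'' and hence the maximum is attained on an open dense set.
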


\begin{proof}
The set of hyperplanes of the form $H_{\vec{c}}$ is dense in the
Grassmannian $G(n,n+1)$.  The rank of the restriction
being bounded by a fixed integer is a holomorphic condition.
Therefore $\rank r|_H$ achieves the maximum on an open dense set in
$G(n,n+1)$, and the conclusion follows.
\end{proof}

%%%%%%%%%%%%%%%%%%%%%%%%%%%%%%%%%%%%%%%%%%%%%%%%%%%%%%%%%%%%%%%%%%%%%%%%%%%%%

\section{Green's Restriction Theorem} \label{section:greens}

The main technical tool in this paper is Green's hyperplane restriction
theorem from \cite{Green}.  We begin this section by recalling Macaulay
representations and their basic properties.  Then we give a precise
statement of Green's theorem and translate the result into the language of
this paper.  In Lemma~\ref{lemma:finiterankrespos} we will prove
a more general analytic version of Green's result using the ideas from
the proof of the restriction theorem in~\cite{Green:gin}.

Given a non-negative integer $c$ and a positive integer $d$, then $c$ is uniquely expressed
in the following form:
\begin{equation}
c= \binom{k_d}{d} +\binom{k_{d-1}}{d-1} + \cdots + \binom{k_1}{1} ,
\end{equation}
where $0 \leq k_1 < k_2 < \cdots < k_d$.  This expression is known as the $d$-th Macaulay representation
of $c$.  Following \cite{Green}, we introduce the following notation:
\begin{equation}
c_{<d>}= \binom{k_d-1}{d} +\binom{k_{d-1}-1}{d-1} + \cdots +
\binom{k_1-1}{1} ,
\end{equation}
where we define $\binom{a}{b}=0$ for $a<b$.  Notice that $\cdot_{<d>}$ is monotone increasing.

%Furthermore, as in \cite{Green}, we notice that positive $c$ can also be uniquely expressed as
%\begin{equation}
%c= \binom{k_d}{d} +\binom{k_{d-1}}{d-1} + \cdots + \binom{k_\delta}{\delta}
%\end{equation}
%where $\delta$ is the smallest $m$ with $k_m \geq m$.

%More precisely, for positive $c$, we have
%\begin{equation}
%(c+1)_{<d>} = \begin{cases}
%(c)_{<d>} \text{ if } \delta \neq 1,\\
%(c)_{<d>} + 1 \text{ if } \delta = 1.
%\end{cases}
%\end{equation}

\begin{thm}[Green \cite{Green}]  \label{thm:green}
Let $E$ be a linear system in $H^0(\C\bP^{n}, \mathcal{O}(d))$ and $E_H$ be the restriction 
of $E$ to a general hyperplane $H \in G(n,n+1)$.  If $c$ is the codimension of $E$ in $H^0(\C\bP^{n}, \mathcal{O}(d))$
 and $c_H$ is the codimension of $E_H$ in $H^0(\C\bP^{n-1}, \mathcal{O}(d))$, then
\begin{equation}
c_H \leq c_{<d>}.
\end{equation}
\end{thm}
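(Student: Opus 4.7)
The plan is to prove Theorem \ref{thm:green} by reducing it to a combinatorial statement about \emph{lex-segment subspaces} of $R_d := H^0(\C\bP^{n}, \sO(d))$, where $R = \C[x_0,\ldots,x_n]$. First I would fix homogeneous coordinates so that a specific hyperplane $H$ is cut out by $x_n$; then restriction to $H$ is the quotient map $R_d \to R_d / x_n R_{d-1}$, whose target has dimension $\binom{n-1+d}{d}$. Because the rank of the induced map $E \to R_d / x_n R_{d-1}$ is lower semicontinuous in $H \in G(n,n+1)$, the codimension $c_H$ achieves its minimum (equivalently, the dimension of $E_H$ achieves its maximum) on a Zariski open set, so one may bound $c_H$ for any convenient generic coordinate hyperplane.

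The heart of the argument is to reduce to the monomial case. I would invoke the theory of generic initial ideals with respect to the reverse lexicographic order: after a generic linear change of coordinates, the rev-lex initial subspace $\mathrm{in}(E)$ is a Borel-fixed monomial subspace with the same codimension $c$ as $E$. The distinguishing feature of rev-lex order is that taking initial subspaces commutes with restriction to $\{x_n = 0\}$, so the codimensions $c_H$ for $E$ and for $\mathrm{in}(E)$ coincide. Next, among all Borel-fixed monomial subspaces of codimension $c$, a compression/exchange argument shows that the \emph{lex-segment}, whose complement in $R_d$ consists of the $c$ lex-largest degree-$d$ monomials, maximizes $c_H$. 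Thus the inequality for arbitrary $E$ follows from the inequality for lex-segments.

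For the lex-segment case the computation of $c_H$ is explicit. Writing the Macaulay representation $c = \binom{k_d}{d} + \binom{k_{d-1}}{d-1} + \cdots + \binom{k_1}{1}$, the $c$ lex-largest monomials stratify into layers according to the smallest variable appearing; setting $x_n = 0$ kills precisely those monomials that contain $x_n$. Counting the ``missing monomials'' that survive in $R_d / x_n R_{d-1}$ gives exactly $\binom{k_d - 1}{d} + \binom{k_{d-1}-1}{d-1} + \cdots + \binom{k_1 - 1}{1} = c_{<d>}$, which is the desired bound.

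The main obstacle is the two-step reduction to lex-segments. Establishing that the rev-lex generic initial ideal commutes with hyperplane restriction, and that lex-segments are extremal among Borel-fixed subspaces of a given codimension -- essentially the content of Macaulay's theorem on Hilbert functions -- both rest on delicate exchange arguments on the poset of monomials, and it is here that the real combinatorial work lies. Once those structural facts are in hand, the closed-form counting for the lex-segment itself is straightforward.
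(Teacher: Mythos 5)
First, a point of order: the paper does not prove this statement. Theorem~\ref{thm:green} is imported verbatim from Green \cite{Green} and used as a black box; the paper's own contribution begins only afterwards, with the translation into the matrix language ($\rank AT^d_{\vec{c}}$) and the definition of $K_n(k)$. So there is no in-paper argument to compare against, and what you have written is a sketch of a proof of Green's theorem itself.

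As such a sketch, your route is a recognized one: it is essentially the modern proof via generic initial ideals (as in Green's later lectures on generic initial ideals and in Herzog--Hibi), rather than Green's original argument in \cite{Green}, which is a direct double induction on $n$ and $c$ using a generic pencil of hyperplanes and no monomial-order machinery. The reductions you describe are correct in outline, and the closed-form count for the lex segment does produce $c_{<d>}$. However, the two pivotal inputs are asserted rather than proved: (a) that the reverse-lexicographic generic initial subspace is Borel-fixed and that passing to initial terms commutes with restriction to $\{x_n=0\}$ (Galligo, Bayer--Stillman); and (b) that among Borel-fixed monomial subspaces of codimension $c$ the lex segment maximizes $c_H$. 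Item (b) is where essentially the entire content of the theorem lives --- it is a Macaulay-type compression argument of comparable depth to the statement being proved --- so ``a compression/exchange argument shows'' leaves a genuine hole unless you cite it; and if both (a) and (b) are to be cited, one may as well cite Green directly, as the paper does. One smaller slip: with the usual convention $x_0 > \cdots > x_n$, the complement of the extremal lex segment must consist of the $c$ lex-\emph{smallest} degree-$d$ monomials (those rich in $x_n$); if the complement were the $c$ lex-largest monomials, setting $x_n = 0$ would kill almost none of them and the survivor count would be close to $c$ rather than $c_{<d>}$.
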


We seek a bound of the dimension of $E$ in terms of the generic dimension of the restriction.  First we restate the theorem in terms of matrices. If $E$ is a linear system in $H^0(\C\bP^{n}, \mathcal{O}(d))$ then we write:
$E = \textrm{span}\{s_1,s_2,\ldots, s_m\}$ where $s_i \in H^0(\C\bP^{n},
\mathcal{O}(d))$. Since each $s_i$ is a degree $d$ homogeneous polynomial in
$z_0,z_1,\ldots,z_n$, then we write $s_i= A_i\sZ_{n,d}$, where each $A_i$ is a vector of the right dimension. 
Then we have $\rank E = \rank A$, where $A$ is the matrix whose rows are $A_i$.

If we restrict $E$ to a hyperplane $H$ we obtain 
\begin{equation}
E|_{H} = \textrm{span}\{s_1|_{H},s_2|_{H},\ldots, s_m|_{H}\}.
\end{equation}
Then $s_i|_{H_{\vec{c}}} = A_iT_{\vec{c}}^d\sZ_{n-1,d}$, and therefore 
\begin{equation}
\rank E|_{H_{\vec{c}}} = \rank AT_{\vec{c}}^d.
\end{equation}
 If we refer to a generic choice of $H$, then equivalently we refer to a generic set of $c_i$'s, where we have:
$\rank_{\C} E|_{H_{\vec{c}}} = \rank_{\C[c_1,\ldots,c_n]} AT_{\vec{c}}^d$.  That
is, the rank of 
$E|_{H_{\vec{c}}}$ over $\C$ is equal to the rank of 
$AT_{\vec{c}}^d$ over the polynomial ring $\C[c_1,\ldots,c_n]$.

Green's theorem can now be restated as follows: Let $A$ be any matrix of size $m\times \binom{n+d}{d}$, and $N=\rank_{\bC} A$. Recall the matrix $T_{\vec{c}}^d$ is of size $\binom{n+d}{d} \times \binom{n-1+d}{d}$. Let $k = \rank_{\C[c_1,\ldots,c_n]} AT_{\vec{c}}^d$. Then we have the following relationship:
\begin{equation}
G(n,d,N) \leq k ,
\end{equation}
where 
\begin{equation}
G(n,d,N) = \binom{n+d-1}{d} - \left( \binom{n+d}{d} - N \right)_{<d>}.
\end{equation}

In the following lemma, we show that the lower bound $G$ is constant in degree.

\begin{lemma}  \label{lemma:Gconstantdegree}
If $N\leq \binom{n+d}{d}$, $d\geq 1$, and $n \geq 2$, then $G(n,d,N) = G(n,d+1,N)$.
%\item $G(n,d,N) \leq G(n+1,d,N)$;
%\item for $n\geq 2$, 
%\begin{equation}
%\frac{1}{2}\left( -1 +\sqrt{1+8N}\right) \leq G(n,d,N).
%\end{equation}
%\end{enumerate}
\end{lemma}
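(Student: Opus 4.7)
The plan is to compare Macaulay representations of the arguments to $(\cdot)_{<\cdot>}$ and show that the difference between the two sides is absorbed by a single application of Pascal's rule.

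Set $c := \binom{n+d}{d} - N$ and $c' := \binom{n+d+1}{d+1} - N$. These are nonnegative by the hypothesis $N \leq \binom{n+d}{d}$, and they are precisely the integers whose $d$-th and $(d+1)$-th Macaulay representations feed into $G(n,d,N)$ and $G(n,d+1,N)$. The key arithmetic input is Pascal's identity $\binom{n+d+1}{d+1} = \binom{n+d}{d+1} + \binom{n+d}{d}$, which gives $c' = c + \binom{n+d}{d+1}$.

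The central claim I would establish is the following: when $N \geq 1$, the $(d+1)$-th Macaulay representation of $c'$ is obtained from the $d$-th Macaulay representation of $c$ by prepending $\binom{n+d}{d+1}$ as a new leading term. This is legitimate because $c < \binom{n+d}{d}$ when $N \geq 1$, which forces the top index $k_d$ in the representation of $c$ to satisfy $k_d \leq n+d-1$; the extended index sequence $n+d > k_d > k_{d-1} > \cdots > k_1 \geq 0$ is then strictly decreasing, as required by the paper's convention. Applying $(\cdot)_{<d+1>}$ termwise then yields $c'_{<d+1>} = \binom{n+d-1}{d+1} + c_{<d>}$, and the desired equality $G(n,d+1,N) = G(n,d,N)$ collapses to the further Pascal identity $\binom{n+d}{d+1} - \binom{n+d-1}{d+1} = \binom{n+d-1}{d}$.

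The only wrinkle is the edge case $N = 0$, in which $c = \binom{n+d}{d}$ and the prepending step fails because $k_d = n+d$; this is handled directly by observing that $c_{<d>} = \binom{n+d-1}{d}$ and $c'_{<d+1>} = \binom{n+d}{d+1}$, so $G(n,d,0) = 0 = G(n,d+1,0)$. The main obstacle in the argument is not conceptual but bookkeeping: verifying that prepending $\binom{n+d}{d+1}$ produces a legal Macaulay representation (with all indices strictly decreasing) hinges entirely on the bound $k_d < n+d$, which follows immediately from $c < \binom{n+d}{d}$. The hypothesis $n \geq 2$ is more than is needed for the arithmetic; only $n \geq 1$ is required so that $\binom{n+d}{d+1}$ is nonzero and the prepended leading index exceeds $d$.
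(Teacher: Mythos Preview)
Your proposal is correct and follows the same approach as the paper: the paper's proof consists solely of asserting the identity $\bigl[\binom{n+d+1}{d+1}-N\bigr]_{<d+1>} = \binom{n+d-1}{d+1} + \bigl[\binom{n+d}{d}-N\bigr]_{<d>}$, and you establish exactly this by the prepend-a-term argument on the Macaulay representation, then reduce to Pascal's rule. Your treatment is more detailed than the paper's (which omits the verification that the prepended representation is valid and the $N=0$ edge case), and your observation that $n\geq 1$ already suffices is also correct.
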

\begin{proof}
The lemma follows immediately from
\begin{equation}
\left[ \binom{n+d+1}{d+1}-N \right]_{<d+1>} = \binom{n+d-1}{d+1} + \left[ \binom{n+d}{d}-N \right]_{<d>}.
\end{equation}
\end{proof}

By the lemma, we know that $G$ is constant in degree when it is defined. We also have $G$ is monotone in $N$ and $G(n, d, \binom{n+d}{d})=\binom{n+d-1}{d}$, hence the following bound is finite and well-defined for fixed $k$.  

\begin{defn}
Fix $n$ and $k$.
%Choose $\tilde{d}$ large enough so that $N \leq \binom{n+\tilde{d}}{\tilde{d}}$.
Define 
\begin{equation} \label{eq:stableR}
K_{n}(k) := \max\Bigl\{ N_0 \in \bN_0 : G(n, d, N_0) \leq k, \text{ where
$d$  is such that $N_0 \leq \binom{n+d}{d}$} \Bigr\}.
\end{equation}
\end{defn}

\begin{remark}
The function $K_n(k)$ gives the sharp dimension bound from Green's theorem.
The bound is achieved by restricting all the monomials in $n$ variables of
degree $d$.  In two dimensions, $K_2(k)$ has a particularly simple form:
\begin{equation}
K_2(k)=\frac{k(k+1)}{2}.
\end{equation}
In higher dimensions exact formulas are difficult, but we find a
simple upper bound in
Remark~\ref{remark:toprovegreen}:
\begin{equation}
K_n(k) \leq
\frac{n-1}{n} \, k^{\frac{n}{n-1}} + \frac{1}{n} \,k .
\end{equation}
Note that equality holds when $n=2$.
\end{remark}

The version of Green's theorem that we use in this paper is recorded in the following lemma.  The important point is that the bound $K_n(k)$ does not depend on the degree $d$.

\begin{lemma}  \label{lemma:green}
Let $n\geq 2$.  If $A \sZ_{n,d}$ is a linear system in $H^0(\C\bP^{n}, \mathcal{O}(d))$ with rank $N$ and $k$ is the rank of the restriction to a general hyperplane; namely, $k= \rank_{\bC[c_1,\ldots,c_n]} A T^d_{\vec{c}}$, then
\begin{equation}
N \leq K_n(k).
\end{equation}
\end{lemma}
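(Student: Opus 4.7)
The plan is a direct invocation of the matrix form of Green's theorem together with the definition of $K_n(k)$. The matrix reformulation derived just above the lemma states that, for $A$ of size $m \times \binom{n+d}{d}$ with $N = \rank_{\bC} A$ and $k = \rank_{\bC[c_1,\ldots,c_n]} AT_{\vec{c}}^d$, one automatically has the inequality $G(n,d,N) \leq k$. This is already the core content; what remains is to convert it into a bound by $K_n(k)$ that is independent of $d$.

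First I would observe that $A$ has exactly $\binom{n+d}{d}$ columns, so $N \leq \binom{n+d}{d}$ is automatic. Combined with $G(n,d,N) \leq k$, this shows that $N$ meets the defining conditions in \eqref{eq:stableR}, so $N$ lies in the set whose maximum is $K_n(k)$. Hence $N \leq K_n(k)$, which is exactly what the lemma asserts.

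The only subtlety worth flagging is the well-definedness of the maximum in \eqref{eq:stableR}: a priori the condition ``$G(n,d,N_0) \leq k$'' depends on a choice of $d$ satisfying $N_0 \leq \binom{n+d}{d}$, and without further input two admissible $d$'s could yield different values of $G$. This is precisely what Lemma \ref{lemma:Gconstantdegree} rules out: once $d$ is large enough that $N_0 \leq \binom{n+d}{d}$, the value $G(n,d,N_0)$ is independent of $d$. Together with the monotonicity of $G$ in $N_0$ and the endpoint value $G(n,d,\binom{n+d}{d}) = \binom{n+d-1}{d}$ noted in the text, this also ensures that $K_n(k)$ is finite. Thus the particular degree $d$ arising in our application is immaterial, and the bound $N \leq K_n(k)$ depends only on $k$.

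The main ``obstacle'' here is really not one: all the hard work was done in proving Green's theorem and in establishing the degree-stability Lemma \ref{lemma:Gconstantdegree}. The present lemma is essentially the packaging step that makes the uniform-in-$d$ bound $K_n(k)$ available for subsequent applications, so the proof reduces to unwinding definitions.
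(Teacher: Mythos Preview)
Your proposal is correct and matches the paper's approach: the paper does not give an explicit proof of this lemma, treating it as an immediate consequence of the matrix reformulation of Green's theorem, the definition of $K_n(k)$, and the degree-stability Lemma~\ref{lemma:Gconstantdegree}. Your unwinding of the definitions is exactly the intended argument.
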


The lemma gives an upper bound for $N$ that is
independent of $d$ for fixed $k$ and $n\geq 2$.
No such upper bound is possible if $n=1$.  

%%%%%%%%%%%%%%%%%%%%%%%%%%%%%%%%%%%%%%%%%%%%%%%%%%%%%%%%%%%%%%%%%%%%%%%%%%%%%

\section{Restrictions of Hermitian forms} \label{section:rhforms}

In this section, we prove Theorem \ref{finiterankres}.  To illustrate the main idea of the proof we 
prove a weaker version of the main result for hyperplanes.

\begin{lemma} \label{lemma:hyperplanesalg}
Let $n \geq 2$ and $z \in \C^{n+1}$.
There exists a function $R_n \colon \N_0 \to \N_0$, such that
if
$r(z,\bar{z})$ is a real-valued bihomogeneous, then
\begin{equation}
\rank r \leq R_{n}\left(
\max_{H} \,\rank r|_H
\right) ,
\end{equation}
where $H$ ranges over all complex hyperplanes through the origin.
\end{lemma}

\begin{proof}
Suppose $r$ is a nonzero
bihomogeneous polynomial of bidegree $(d,d)$.  We write
$r(z,\bar{z})=\langle C \sZ_{n,d}, \sZ_{n,d} \rangle$, where $C$ is the coefficient matrix.
Restricting $r$ to a hyperplane $H$, we have 
\begin{equation}
r|_H(z,\bar{z})=\langle T_H^* C T_H \sZ_{n-1,d}, \sZ_{n-1,d} \rangle.
\end{equation}
Let $k=\max_{H} \,\rank r|_H= \max_{H} \rank{T_H^*CT_H}$  where $H$ ranges over all complex hyperplanes,
then by Lemma \ref{lemma:restrictgraphs}, 
\begin{equation}
k=\max_{\vec{c}\in \bC^n} \, \rank r|_{H_{\vec{c}}}=\max_{\vec{c}} \,\rank
{(T^d_{\vec{c}})^*CT^d_{\vec{c}}} = \rank_{\bC[\bar{c}_1,\ldots, \bar{c}_n,c_1,\ldots, c_n]} {(T^d_{\vec{c}})^*CT^d_{\vec{c}}}.
\end{equation}
The strategy is to polarize (treat $c_j$ and $\bar{c}_j$ as separate
variables) and use Lemma \ref{lemma:green} twice.

Let $D= C T^d_{\vec{c}}$, $m= \rank_\bC C$, and let $k_1= \rank_{\bC[c_1,\ldots,c_n]} D$.  By Lemma \ref{lemma:green}, we have 
\begin{equation}
m \leq K_n(k_1).
\end{equation}

For a generic choice of $c_1, \ldots, c_n$, we have
\begin{equation}
k_1= \rank_\bC D = \rank_\bC D^*.
\end{equation}

Likewise, polarizing and taking generic choices of $\bar{c}_1, \ldots,
\bar{c}_n$, we have
\begin{equation}
\rank_{\bC[\bar{c}_1,\ldots, \bar{c}_n,c_1,\ldots, c_n]}
{(T^d_{\vec{c}})^*CT^d_{\vec{c}}} = \rank_{\bC[c_1,\ldots, c_n]}
{D^{*}T^d_{\vec{c}}} .
\end{equation}
Applying Lemma~\ref{lemma:green} again, we get
\begin{equation}
k_1 \leq K_n(k).
\end{equation}

Thus $\rank{r} \leq K_n(K_n(k))$, and we have an upper bound for $\rank{r}$ that is independent of degree.  We complete the proof by defining $R_n(k)$ to be the maximum $\rank{r}$ for any bihomogeneous polynomial $r$
with $\max_H \rank r|_H \leq k$.
\end{proof}

\begin{remark}
In the proof of Lemma \ref{lemma:hyperplanesalg}, we proved that 
\begin{equation}
R_n(k) \leq K_n\bigl(K_n(k)\bigr).
\end{equation}
In fact, we believe that $R_n(k) =K_n(k)$.
\end{remark}

We are now ready to prove the main result, Theorem~\ref{finiterankres}.  For
reader convenience we restate the theorem.

\begin{thmnonum}
Let $n \geq 2$, $z\in \C^{n+1}$, and $1 \leq m \leq n-1$.  Then there exists a function
$R_{m,n} \colon \N_0 \to \N_0$ such that for any bihomogeneous polynomial $r(z,\bar{z})$, 
\begin{equation}
\rank r \leq R_{m,n}\left(
\max_{L \in G(m+1,n+1)} \,\rank r|_L
\right) .
\end{equation}
\end{thmnonum}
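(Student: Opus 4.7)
The idea is simply to iterate the hyperplane result of Lemma~\ref{lemma:hyperplanesalg} up the dimension ladder: if the rank is bounded on all $j$-dimensional subspaces, then by applying the hyperplane lemma to the restriction of $r$ to any $(j+1)$-dimensional subspace, we get a bound on all $(j+1)$-dimensional restrictions. Starting at $j = m+1$ and iterating up to $j+1 = n+1$ will bound $\rank r$ itself.

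Fix a bihomogeneous polynomial $r(z,\bar z)$ of bidegree $(d,d)$ on $\bC^{n+1}$, and set $k := \max_{L \in G(m+1,n+1)} \rank r|_L$. For each integer $j$ with $m+1 \le j \le n+1$, write
\begin{equation}
M_j := \sup_{L \in G(j,n+1)} \rank r|_L,
\end{equation}
so $M_{m+1} \le k$ and $M_{n+1} = \rank r$. For the inductive step, fix $j$ with $m+1 \le j \le n$ and any $L' \in G(j+1,n+1)$. Choose a linear embedding $E_{L'} \colon \bC^{j+1} \to \bC^{n+1}$ with image $L'$. Then $r \circ E_{L'}$ is a real-valued bihomogeneous polynomial on $\bC^{j+1}$, and by Lemma~\ref{lemma:welldefined} its rank equals $\rank r|_{L'}$ independently of the embedding. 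Every hyperplane $H \subset \bC^{j+1}$ through the origin transports via $E_{L'}$ to a $j$-dimensional subspace $L = E_{L'}(H) \subset L'$, with
\begin{equation}
\rank (r \circ E_{L'})|_H = \rank r|_L \le M_j.
\end{equation}
Because $j+1 \ge m+2 \ge 3$, we may apply Lemma~\ref{lemma:hyperplanesalg} to $r \circ E_{L'}$ on $\bC^{j+1}$ to obtain $\rank r|_{L'} \le R_{j}(M_j)$; taking the supremum over $L'$ gives $M_{j+1} \le R_{j}(M_j)$.

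Iterating this inequality from $j = m+1$ up through $j = n$ yields
\begin{equation}
\rank r = M_{n+1} \le R_{n}\bigl( R_{n-1}\bigl( \cdots R_{m+1}(k) \cdots \bigr)\bigr),
\end{equation}
so we simply define $R_{m,n}(k)$ to be the right-hand side, a function of $k$ alone (given $m,n$), which proves the theorem.

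\textbf{Where the work is.} Almost all of the real content has already been extracted in Lemma~\ref{lemma:hyperplanesalg}, whose proof rests on the two-fold application of Green's restriction theorem; the present theorem is essentially bookkeeping. The only point to watch is the dimension constraint: Lemma~\ref{lemma:hyperplanesalg} requires the ambient space to have dimension at least $3$, which is why the induction can begin only once $j+1 \ge 3$, i.e., once $m \ge 1$. This matches the hypothesis $1 \le m \le n-1$ exactly, so no separate degenerate cases arise.
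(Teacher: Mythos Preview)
Your proof is correct and follows essentially the same strategy as the paper: both iterate Lemma~\ref{lemma:hyperplanesalg} a total of $n-m$ times, stepping the dimension of the subspace up by one at each stage. Your formulation via the quantities $M_j$ is arguably cleaner than the paper's phrasing in terms of factoring the restriction matrix as $T_L = T_S T_H$, but the underlying induction is identical.
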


\begin{proof}
Suppose $r$ is a nonzero
bihomogeneous polynomial of bidegree $(d,d)$.
We again write $r(z,\bar{z})=\langle C \sZ_{n,d}, \sZ_{n,d} \rangle$.
After applying a linear change of coordinates we assume without loss of
generality that all $L \in G(m+1,n+1)$ are given as
\begin{equation} \label{eq:Vdef}
z' =
V
z'' ,
\end{equation}
for $(z',z'') \in \C^{n-m} \times \C^{m+1}$
and
$V \in M_{n-m,m+1}$ is a matrix.
Using the embedding $z'' \mapsto (V z'', z'')$ we obtain a
restriction matrix $T_L$.  We notice that $T_L$ depends holomorphically on
the entries of $V$.  

We write
\begin{equation}
r|_L(z,\bar{z})=\langle T_L^* C T_L \sZ_{m,d}, \sZ_{m,d} \rangle.
\end{equation}
And again note that
\begin{equation}
\rank{r|_L}=\rank{T_L^*CT_L} .
\end{equation}
If $m = n-1$ we are done.  We let $R_{n-1,n}(k) = R_n(k)$.
Otherwise,
$T_L$
is a composition $T_L = T_S T_H$ where $H$ is a hyperplane through
the origin in $S$,
where $S$ is an $m+2$ dimensional plane through the origin in $\C^{n+1}$.
As $T_L^* C T_L = (T_H T_S)^{*} C T_S T_H$ is of rank less than $k$,
$T_S^{*} C T_S$ is of rank less than $R_{m+1}(k)$ by Lemma~\ref{lemma:hyperplanesalg}.
The theorem follows by applying Lemma~\ref{lemma:hyperplanesalg}
$(n-m)$ times.  The function $R_{m,n}(k)$ is then defined recursively
by composition.
\end{proof}

We say a set $\sL \subset G(m+1,n+1)$ is \emph{generic} if $\sL$
is not contained in any proper complex algebraic subvariety of $G(m+1,n+1)$.
That is, no nontrivial polynomial vanishes on $\sL$.
With this terminology, we have the following corollary:

\begin{cor} \label{cor:rankpospoly}
Let $n \geq 2$, $z\in \C^{n+1}$, and $1 \leq m \leq n-1$.
There exists a function
$K_{m,n} \colon \N_0 \to \N_0$ such that
for any positive semidefinite bihomogeneous polynomial $r(z,\bar{z})$, and a 
generic subset $\sL \subset G(m+1,n+1)$, we have
\begin{equation}
\rank r \leq K_{m,n}\left(
\max_{L \in \sL} \,\rank r|_L
\right) .
\end{equation}
\end{cor}

Note that $r(z,\bar{z}) \geq 0$ for all $z \in \C^{n+1}$
is not enough to conclude that
the matrix of coefficients is positive semidefinite.  On the other hand,
if the matrix is positive semidefinite, then $r(z,\bar{z}) \geq 0$
is true for all $z \in \C^{n+1}$.  See \cites{DV, D:hilbertadvances} for more on
positivity conditions.

\begin{proof}
The proof carries on as before after we make the following observation. Let $r(z,\bar{z})=\langle C \sZ_{n,d}, \sZ_{n,d} \rangle$.
If $C$ is positive semidefinite then we write $C = A^*A$.
The restriction of $r$ to $L$ is
\begin{equation}
r|_L(z,\bar{z})=\langle T_L^* C T_L \sZ_{m,d}, \sZ_{m,d} \rangle.
\end{equation}
Again note that
\begin{equation}
\rank{r|_L}=\rank{T_L^*CT_L}=\rank{AT_L} .
\end{equation}
Let $k=\max_{L \in \sL} \,\rank r|_L$.  The condition
\begin{equation} \label{eq:atlrankk}
\rank AT_L \leq k
\end{equation}
is defined by the vanishing of certain polynomials in the entries of $AT_L$.
Defining $L$ with a matrix $V \in M_{n-m,m+1}$
as in \eqref{eq:Vdef}, we notice that as $A$ is
fixed, then 
%As $A$ is fixed and $T_L$ depends on the entries of $V$, we have that
\eqref{eq:atlrankk} is defined by vanishing of certain polynomials in the
entries of $V$.  
We know that
\eqref{eq:atlrankk} is true for all $L \in \sL$, where $\sL$ is not contained
in any complex algebraic subvariety of $G(m+1,n+1)$.  The
set of corresponding matrices $V$ is 
not contained in any complex algebraic subvariety of $M_{n-m,m+1}$.  Thus
\eqref{eq:atlrankk} holds for 
all $V \in M_{n-m,m+1}$, and so for all $L \in G(m+1,n+1)$.

The conclusion follows by applying Lemma~\ref{lemma:green}
$(n-m)$ times.  The function $K_{m,n}(k)$ is then defined recursively
by composition as in the proof of Theorem~\ref{finiterankres}.
\end{proof}

%%%%%%%%%%%%%%%%%%%%%%%%%%%%%%%%%%%%%%%%%%%%%%%%%%%%%%%%%%%%%%%%%%%%%%%%%%%%%

\section{Real-analytic case} \label{section:racase}

We wish to decompose real-analytic functions as a difference of squared norms
of Hilbert space valued holomorphic functions.
This line of reasoning follows the ideas pioneered by
D'Angelo~\cite{DAngelo:CR}.   We introduce a rescaling that
allows us to work with bounded operators.

First we prove that the matrix of coefficients is a trace-class operator
if the
series converges in the right neighborhood.  
See the book \cite{AG:linop} for more information on matrix operators on
$\ell^2$.
We think of $\C^n$ as an open subset of $\C\bP^n$ by setting $z_0 = 1$.
Therefore in $\C^n$ our coordinates will be $z = (z_1,\ldots,z_n) \in \C^n$.
%We will be able to apply results for
%bihomogeneous polynomials by fixing a maximum degree, truncating our series,
%and then homogenizing
%with the $z_0$ variable.  Before we do so, we have to 
%work in the nonhomogeneous coordinates.
Let $\sZ = \sZ_n = (\ldots,z^\alpha,\ldots)$ be the mapping of $z \in \C^n$
to the space of infinite sequences, where components of the mapping
are all possible monomials.
By using the geometric series
we note that $\sZ$ maps the unit polydisc $\Delta$ to $\ell^2$.

\begin{lemma} \label{lemma:compactop}
Let $\Delta \subset \C^n$ be the unit polydisc.
Suppose that $r(z,\bar{z})$ is a real-analytic function whose
complexified power series at 0 converges in a neighborhood of
$\overline{\Delta} \times \overline{\Delta} \subset \C^n \times \C^n$.
Then the matrix of coefficients of $r$ defines a trace-class operator on
$\ell^2$.
\end{lemma}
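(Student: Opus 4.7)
The plan is to bound the trace norm of the coefficient operator $C$ directly by the $\ell^1$ sum of its entries, using the natural decomposition of $C$ into matrix units, and then to control this $\ell^1$ sum using Cauchy-type estimates coming from the convergence hypothesis.

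First, I would extract quantitative growth estimates on the coefficients. The hypothesis tells us the complexified series
\begin{equation}
r(z,w) = \sum_{\alpha,\beta} c_{\alpha\beta}\, z^\alpha w^\beta
\end{equation}
converges in some polydisc $(1+\epsilon)\Delta \times (1+\epsilon)\Delta$ with $\epsilon > 0$. In particular, the series converges absolutely at the point with all coordinates equal to $1+\epsilon/2$, so each monomial is bounded; setting $\sigma := (1+\epsilon/2)^{-1} \in (0,1)$, there is a constant $M$ with
\begin{equation}
|c_{\alpha\beta}| \leq M\, \sigma^{|\alpha|+|\beta|} \qquad \text{for all } \alpha,\beta \in \N_0^n .
\end{equation}

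Next, I would identify $C$ as an operator on the Hilbert space $\ell^2(\N_0^n)$ (with basis $\{e_\alpha\}$ indexed by multi-indices) and write
\begin{equation}
C = \sum_{\alpha,\beta} c_{\alpha\beta}\, E_{\alpha\beta},
\end{equation}
where $E_{\alpha\beta} = e_\alpha \otimes e_\beta^*$ is the rank-one matrix unit. Each $E_{\alpha\beta}$ has trace-norm exactly $1$, so the triangle inequality for the trace norm $\|\cdot\|_1$ gives
\begin{equation}
\|C\|_1 \leq \sum_{\alpha,\beta} |c_{\alpha\beta}|,
\end{equation}
provided the right-hand side converges (which will simultaneously justify convergence of the partial sums to a trace-class operator in the trace norm).

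Finally, I would evaluate the bound using a multi-index geometric series. Combining the estimate on $|c_{\alpha\beta}|$ with the factorization
\begin{equation}
\sum_{\alpha \in \N_0^n} \sigma^{|\alpha|} \;=\; \prod_{i=1}^{n} \sum_{k=0}^{\infty} \sigma^{k} \;=\; (1-\sigma)^{-n},
\end{equation}
yields
\begin{equation}
\sum_{\alpha,\beta} |c_{\alpha\beta}| \;\leq\; M\, (1-\sigma)^{-2n} \;<\; \infty,
\end{equation}
which gives both the trace-class property and the explicit bound $\|C\|_1 \leq M(1-\sigma)^{-2n}$. The only point to check carefully is that the series for $C$ converges in trace norm (not just entrywise) to a single operator; this follows because the partial sums are Cauchy in $\|\cdot\|_1$ by the same estimate. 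There is no real obstacle here beyond this routine bookkeeping — the content of the lemma is essentially a Cauchy-estimate computation dressed up in operator-theoretic language.
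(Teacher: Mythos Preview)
Your proof is correct and follows essentially the same route as the paper: both arguments reduce to showing $\sum_{\alpha,\beta}|c_{\alpha\beta}|<\infty$ and then deduce the trace-class property from this $\ell^1$ bound. The paper is slightly more direct in obtaining summability (it simply evaluates absolute convergence at the point $(1,\ldots,1)$ rather than extracting Cauchy-type decay $|c_{\alpha\beta}|\le M\sigma^{|\alpha|+|\beta|}$), and it bounds $\sum_\alpha \langle |C|e_\alpha,e_\alpha\rangle$ directly instead of using your matrix-unit decomposition, but these are cosmetic differences.
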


\begin{proof}
Write $r$ as $\langle C \sZ , \sZ \rangle$ where $C = [ c_{\alpha \beta}
]_{\alpha\beta}$ is the matrix of coefficients.
The hypothesis says that the interior of the
domain of convergence includes the point
$z=(1,1,\ldots,1)$.  As 
the series converges absolutely
at $(1,1,\ldots,1)$ we get
$\sum_{\alpha \beta} \abs{c_{\alpha\beta}} < \infty$.  A matrix whose
entries are summable defines a compact operator.
  Moreover,
if $\{ e_\alpha \}$
is the standard basis corresponding to the monomials, then
\begin{equation}
\sum_{\alpha} \langle \abs{C} e_\alpha, e_\alpha \rangle \leq
\sum_{\alpha} \bigl\lVert \abs{C} e_\alpha \bigr\rVert =
\sum_{\alpha} \norm{C e_\alpha} \leq
\sum_{\alpha} \sum_{\beta} \abs{\langle C e_\alpha, e_\beta \rangle} < \infty .
\end{equation}
Here $\abs{C}$ is the unique positive Hermitian square root of $C^*C$.
So $C$ is trace-class.
\end{proof}

As $C$ is a Hermitian trace-class operator, we apply the spectral theorem.
We write $C$ as a sum of rank one matrices as follows.
Let $\lambda_k$ be the $k$-th nonzero eigenvalue.  Let $\epsilon_k = \pm 1$
be the sign of the $k$-th nonzero eigenvalue.  We ignore all zero eigenvalues.
Let $\{ v_k \}$ be an orthonormal set of corresponding eigenvectors.
Let $m \in \N_0 \cup \{ \infty \}$ be the rank of $C$, then we write
\begin{equation}
C = \sum_{j=1}^m \lambda_j v_j v_j^* .
\end{equation}
The sum converges since the eigenvalues of a trace-class operator are absolutely summable.
We define holomorphic functions
$f_j(z) = \sqrt{\abs{\lambda_j}} \, v_j^* \sZ$.
As $v_j \in \ell^2$, we note that each $f_j$ converges in the unit polydisc.
We see that
\begin{equation} \label{eq:holexp}
\langle C \sZ, \sZ \rangle =
\sum_{j=1}^m \epsilon_j \abs{f_j(z)}^2 .
\end{equation}
Suppose that for some $m'$, $\epsilon'_j = \pm 1$, and some $g_j$ holomorphic in
the unit polydisc we write
\begin{equation}
\langle C \sZ, \sZ \rangle =
\sum_{j=1}^{m'} \epsilon'_j \abs{g_j(z)}^2 .
\end{equation}
If $m' = \infty$, then obviously $m' \geq m$.  If $m'$ is finite
then we have written $C$ as a sum of $m'$ rank-one operators, and hence $m' \geq m$.  

Next we have to show that the rank of the matrix $C$ is invariant under
change of coordinates.  We define the rank of $r$ at the origin to be the
rank of the matrix $C$.

\begin{lemma} \label{lemma:rankchangecoord}
Let $\Delta \subset \C^n$ be the unit polydisc.
Suppose that $r(z,\bar{z})$ is a real-analytic function whose
complexified power series at 0 converges in a neighborhood of
$\overline{\Delta} \times \overline{\Delta} \subset \C^n \times \C^n$.

Suppose that $\varphi$ is a biholomorphic change of coordinates that takes
a neighborhood of $\overline{\Delta}$ to
a neighborhood of $\overline{\Delta}$.  Then
$r \circ \varphi$ (which also converges on
a neighborhood of
$\overline{\Delta} \times \overline{\Delta} \subset \C^n \times \C^n$)
has the same rank as $r$ at the origin.
\end{lemma}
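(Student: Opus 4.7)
The plan is to invoke the minimality property of the holomorphic decomposition \eqref{eq:holexp}: as observed in the paragraph following \eqref{eq:holexp}, the rank of $r$ at the origin is the minimum number of terms in any representation $r = \sum_{j=1}^{m'} \epsilon'_j \abs{g_j}^2$ with each $g_j$ holomorphic on the unit polydisc and $\epsilon'_j = \pm 1$. Granting this, I would write $r = \sum_{j=1}^m \epsilon_j \abs{f_j}^2$ via the spectral decomposition with $m = \rank r$, compose with $\varphi$ to obtain
\begin{equation}
r \circ \varphi = \sum_{j=1}^m \epsilon_j \abs{f_j \circ \varphi}^2 ,
\end{equation}
and conclude $\rank(r \circ \varphi) \leq m = \rank r$ by applying minimality to the left-hand side. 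Running the same argument with $\varphi$ replaced by $\varphi^{-1}$ and $r$ by $r \circ \varphi$ would then yield the reverse inequality, hence equality.

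In detail, the steps I would carry out in order are as follows. First, verify via Lemma~\ref{lemma:compactop} that $r \circ \varphi$ has a well-defined rank at the origin, by checking that its complexified series converges in a neighborhood of $\overline{\Delta} \times \overline{\Delta}$; this is a routine check using continuity of $\varphi$ and of the map $\xi \mapsto \overline{\varphi(\bar\xi)}$, possibly after shrinking the polydisc. Second, produce the spectral decomposition $r = \sum_{j=1}^m \epsilon_j \abs{f_j}^2$ as in \eqref{eq:holexp}. Third, after a harmless dilation $z \mapsto \rho z$, arrange that each $f_j$ extends holomorphically to a neighborhood of $\overline{\Delta}$, so that each $f_j \circ \varphi$ is likewise holomorphic on a neighborhood of $\overline{\Delta}$; the dilation induces a diagonal, hence invertible, similarity on the coefficient matrix and does not change the rank. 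Fourth, invoke the minimality statement applied to the decomposition of $r \circ \varphi$ to conclude $\rank(r \circ \varphi) \leq m$. Finally, repeat the preceding steps with $\varphi$ and $\varphi^{-1}$ interchanged.

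The main obstacle is the third step: justifying that after a suitable rescaling the spectral eigenfunctions $f_j(z) = \sqrt{\abs{\lambda_j}}\, v_j^* \sZ(z)$ all extend holomorphically to a neighborhood of $\overline{\Delta}$ rather than only to the open polydisc. The hypothesis that the complexified series of $r$ converges in a neighborhood of $\overline{\Delta} \times \overline{\Delta}$ forces geometric decay of the coefficient matrix entries $c_{\alpha\beta}$, from which one can extract enough decay of the spectral eigenvectors $v_j \in \ell^2$ to make each $f_j$ converge on a strict neighborhood of $\overline{\Delta}$. Once this convergence has been established the remaining steps are formal, and the symmetry argument using $\varphi^{-1}$ closes the proof.
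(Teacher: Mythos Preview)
Your proposal is correct and follows essentially the same route as the paper: decompose $r$ spectrally as $\sum_{j=1}^m \epsilon_j\abs{f_j}^2$ with $m=\rank r$, compose with $\varphi$, invoke the minimality property after \eqref{eq:holexp} to get $\rank(r\circ\varphi)\le m$, and conclude by symmetry using $\varphi^{-1}$. The paper's proof omits your step~3 and simply treats the $f_j$ as holomorphic where needed, but your remark that geometric decay of the $c_{\alpha\beta}$ forces geometric decay of the eigenvector entries (via $v_\alpha=\lambda^{-1}\sum_\beta c_{\alpha\beta}v_\beta$) is exactly the detail that justifies this.
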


\begin{proof}
Suppose that $r$ is of finite rank $k$.  Then there exist $k$
eigenvectors of $C$, and therefore we write
\begin{equation}
r(z,\bar{z}) = \sum_{j=1}^k \epsilon_j \abs{f_j(z)}^2 
\end{equation}
for $\epsilon_j = \pm 1$ and $f_j$ holomorphic functions.  Therefore
\begin{equation}
(r \circ \varphi)(z,\bar{z}) = \sum_{j=1}^k \epsilon_j \abs{f_j(\varphi(z))}^2 , 
\end{equation}
so $\rank r \circ \varphi \leq \rank r$.  We are finished by symmetry.
\end{proof}

%With the aid of Lemma~\ref{lemma:rankchangecoord} we can define the rank of $r$ at a point
%without worrying about an exact rescaling or change of coordinates to
%make the matrix of coefficients a trace-class operator.
%In fact
We define the number of positive and negative eigenvalues
at a point (the rank of the positive or negative definite part) in the
obvious way.
Using exactly the same argument as above,
we can show that the rank, the number of positive, and the number of negative eigenvalues are
constant on connected sets.

\begin{lemma} \label{lemma:rankchangept}
Let $U \subset \C^n$ be a connected open set.  Let $r \colon U \to \R$
be a real-analytic function.  Let $p_1,p_2 \in U$, and let
$k_j,a_j,b_j$ denote the rank, the number of positive, and the number of
negative eigenvalues respectively at $p_j$.  Then $k_1 = k_2$, $a_1 = a_2$, and $b_1 =
b_2$.
\end{lemma}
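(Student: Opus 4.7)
The plan is to show that the functions $k$, $a$, $b$ are locally constant on $U$ and then invoke connectedness. Fix $p_1 \in U$. By a uniform rescaling I would assume that the complexified power series of $r_{p}(w,\bar w) := r(p + \rho w,\,\overline{p+\rho w})$ converges on a neighborhood of $\overline{\Delta}\times\overline{\Delta}$ for every $p$ in a small ball around $p_1$. Lemma~\ref{lemma:compactop} then realizes the coefficient matrix $C_{p}$ of $r_p$ as a Hermitian trace-class operator on $\ell^2$ whose rank (respectively number of positive/negative eigenvalues) equals $k(p)$ (respectively $a(p)$, $b(p)$). For $p_2$ close enough to $p_1$ the translation $\varphi(w) := w + \rho^{-1}(p_2-p_1)$ is a biholomorphism of a slightly shrunk neighborhood of $\overline{\Delta}$ onto a slightly shifted one, both still inside the common domain of convergence, and satisfies $r_{p_1}\circ\varphi = r_{p_2}$. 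Lemma~\ref{lemma:rankchangecoord} immediately yields $k(p_1)=k(p_2)$.

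To handle the signatures I would invoke the spectral decomposition $C_{p_1} = \sum_j \lambda_j v_j v_j^*$ of the trace-class Hermitian operator $C_{p_1}$ and write, exactly as in the paragraph containing \eqref{eq:holexp}, $r_{p_1} = \sum_j \epsilon_j |f_j|^2$ with $\epsilon_j = \operatorname{sgn}(\lambda_j)$ and $f_j = \sqrt{|\lambda_j|}\, v_j^*\sZ$. Composition with $\varphi$ then produces $r_{p_2} = \sum_j \epsilon_j |f_j\circ\varphi|^2$, a decomposition of $r_{p_2}$ with exactly $a(p_1)$ positive and $b(p_1)$ negative summands.

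The main obstacle is the final step: showing that any such decomposition of $r_{p_2}$ forces $a(p_2) \leq a(p_1)$ and $b(p_2) \leq b(p_1)$, since reversing the roles of $p_1$ and $p_2$ then closes the loop. This is a Sylvester's-law-of-inertia statement for Hermitian trace-class operators, and I would settle it by characterizing $a(p_2)$ as the supremum over finite-dimensional subspaces $V \subset \ell^2$ on which $\langle C_{p_2}\cdot,\cdot\rangle$ is strictly positive definite (and $b(p_2)$ symmetrically). Any decomposition $r_{p_2} = \sum_j \epsilon_j'|h_j|^2$ with $a'$ positive and $b'$ negative terms exhibits $C_{p_2} = P - N$ with $\operatorname{rank} P \leq a'$ and $\operatorname{rank} N \leq b'$; for such $V$, positivity on $V$ forces $V \cap \ker P = \{0\}$, so $\dim V \leq \operatorname{rank} P \leq a'$, giving the required upper bound, while the spectral decomposition itself realizes equality. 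Once local constancy of $k$, $a$, $b$ is in hand, connectedness of $U$ finishes the proof.
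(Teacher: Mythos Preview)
Your proposal is correct and follows essentially the same approach that the paper intends: the paper's ``proof'' of this lemma is literally the sentence ``Using exactly the same argument as above we can show that the rank, the number of positive and the number of negative eigenvalues are constant on connected sets,'' referring back to the proof of Lemma~\ref{lemma:rankchangecoord}. Your write-up simply unpacks that sentence, using a translation as the biholomorphism between nearby base points and then invoking connectedness.

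The one place where you go beyond the paper is in making the Sylvester-inertia step explicit. The paper, just before Lemma~\ref{lemma:rankchangecoord}, only verifies that the total rank is the minimal number of terms in any decomposition $\sum_j \epsilon_j'|g_j|^2$; it never checks the corresponding minimality for the positive and negative counts separately, which is what one actually needs to pass from ``$r_{p_2}$ admits a decomposition with $a(p_1)$ positive and $b(p_1)$ negative terms'' to ``$a(p_2)\le a(p_1)$ and $b(p_2)\le b(p_1)$.'' Your argument via the variational characterization of $a(p_2)$ (max dimension of a subspace on which the form is positive definite) together with $V\cap\ker P=\{0\}$ is the standard way to do this and is correct for compact self-adjoint operators. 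So your version is a genuine improvement in rigor over the paper's one-line sketch, while remaining the same strategy.

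One minor technical remark: your invocation of Lemma~\ref{lemma:rankchangecoord} is slightly informal, since the translation $\varphi$ does not literally take a neighborhood of $\overline{\Delta}$ to a neighborhood of $\overline{\Delta}$ as that lemma's hypotheses require. But you clearly see this (``onto a slightly shifted one, both still inside the common domain of convergence''), and the proof of Lemma~\ref{lemma:rankchangecoord} only needs that $f_j\circ\varphi$ converge where needed, which your uniform choice of $\rho$ guarantees. Alternatively one could re-dilate slightly after translating; either way this is a non-issue.
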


With the aid of the lemmas above we define the rank of $r$
regardless of where it converges.  We simply translate and
dilate $r$ so that it converges in the unit polydisc and then take the rank
of the resulting compact operator.

If the domain $U$ of $r$ is connected,
we define the
\emph{rank} of $r$ as the rank at some fixed point of $U$.
%WE ARE REPEATING THE SAME POINT TWICE HERE. CAN YOU CHANGE THAT???
  %We can also talk about the rank
%of the function and not worry about the exact point where we are expanding
%about.
%Furthermore we can define the number of positive and negative eigenvalues of
%$r$ for a connected domain $U$.
Similarly define the \emph{signature pair}
to be the pair $(a,b)$ if $r$ has $a$ positive and $b$ negative eigenvalues.
We allow $a$ and $b$ to be in $\N_0 \cup \{\infty \}$.

%Let us look at positive semi-definite forms and restrictions.

\begin{lemma} \label{lemma:restcompact}
%Let $r(z,\bar{z})$ be a real-analytic function whose
%complexified power series converges in a neighborhood of
%$\overline{\Delta}_n \times \overline{\Delta}_n \subset \C^n \times \C^n$.
Let $L$ be an affine complex submanifold of $\C^n$ of dimension $m$
that intersects the unit polydisc $\Delta_n \subset \C^n$ and
$E \colon \C^{m} \to L$ an embedding such that $E$ takes the
closed unit polydisc $\overline{\Delta}_{m} \subset \C^{m}$
to a subset of $L \cap \Delta_n$.

The restriction matrix $T_L$ defined by $\sZ_n \circ E = T_L \sZ_{m}$
is a Hilbert-Schmidt operator.
\end{lemma}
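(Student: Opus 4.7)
The plan is to compute the Hilbert--Schmidt norm of $T_L$ directly via a Parseval-style identity on the distinguished boundary torus of the source polydisc. First I would unpack what the entries of $T_L$ are: the defining relation $\sZ_n \circ E = T_L \sZ_m$ forces the $(\alpha,\beta)$-entry of $T_L$ to equal the coefficient of $w^\beta$ in the expansion of the polynomial $w \mapsto E(w)^\alpha$ in powers of $w$. Thus the row indexed by $\alpha$ records the power-series coefficients of the holomorphic function $f_\alpha(w) := E(w)^\alpha$.

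Next, I would use the hypothesis that $E(\overline{\Delta}_m) \subset L \cap \Delta_n$. Since $\overline{\Delta}_m$ is compact and $E$ is continuous, its image is a compact subset of the open polydisc $\Delta_n$, so there exists $r < 1$ with $\abs{E(w)_j} \leq r$ for every $w \in \overline{\Delta}_m$ and every $j = 1, \ldots, n$. In particular this bound holds on the torus $T^m \subset \overline{\Delta}_m$. Because each $f_\alpha$ is a polynomial, it lies in the Hardy space $H^2(\Delta_m)$, and Parseval's identity on the torus gives
\begin{equation}
\sum_{\beta} \abs{(T_L)_{\alpha\beta}}^2 = \int_{T^m} \abs{E(w)^\alpha}^2 \, d\sigma(w),
\end{equation}
where $d\sigma$ is normalized Haar measure.

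Summing over $\alpha$ and interchanging sum and integral (justified by nonnegativity and Fubini--Tonelli), I would obtain
\begin{equation}
\norm{T_L}_{HS}^2 = \sum_{\alpha,\beta} \abs{(T_L)_{\alpha\beta}}^2 = \int_{T^m} \sum_{\alpha} \abs{E(w)^\alpha}^2 \, d\sigma(w).
\end{equation}
For each fixed $w \in T^m$, the inner sum is a product of geometric series:
\begin{equation}
\sum_{\alpha \in \N_0^n} \abs{E(w)^\alpha}^2 = \prod_{j=1}^n \frac{1}{1 - \abs{E(w)_j}^2} \leq \frac{1}{(1-r^2)^n}.
\end{equation}
Hence $\norm{T_L}_{HS}^2 \leq (1-r^2)^{-n} < \infty$, which is the desired conclusion.

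I do not foresee a serious obstacle: the only subtle point is making sure the uniform bound $r < 1$ holds on the full torus (not merely in the interior), which is immediate from compactness of $\overline{\Delta}_m$ and the assumption that $E$ maps the closed polydisc into the open polydisc in $\C^n$. Once that is in hand, the Parseval identity plus the geometric-series product formula finishes the argument.
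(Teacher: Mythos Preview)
Your proof is correct and takes a somewhat different route from the paper's. The paper observes that $\norm{\sZ_n \circ E}^2 = \langle T_L^* T_L \sZ_m, \sZ_m \rangle$ is a real-analytic function whose complexified series converges on (a neighborhood of) $\overline{\Delta}_m \times \overline{\Delta}_m$, since $\norm{\sZ_n}^2$ is the geometric-series product and $E$ maps the closed polydisc into the open one; it then invokes the earlier Lemma~\ref{lemma:compactop} to conclude that the coefficient matrix $T_L^* T_L$ is trace-class, hence $T_L$ is Hilbert--Schmidt. You instead bypass that lemma and compute $\norm{T_L}_{HS}^2$ directly via Parseval on the torus, arriving at the same geometric-series product and an explicit bound $(1-r^2)^{-n}$. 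Both arguments rest on the same underlying fact---convergence of $\prod_j (1 - \abs{E(w)_j}^2)^{-1}$ on $\overline{\Delta}_m$---but the paper's version is shorter because it recycles prior machinery, while yours is self-contained and yields a quantitative estimate.
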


\begin{proof}
Note that
$\norm{\sZ_n}^2 = \langle \sZ_n, \sZ_n \rangle$ is the absolute value squared
of the geometric series and hence converges in the unit polydisc $\Delta_n
\subset \C^n$.
As $E$ takes
the closed unit polydisc $\overline{\Delta}_{m} \subset \C^{m}$
to $\Delta_n$, we have that $\norm{\sZ_n \circ E}^2$
converges absolutely for $z \in \overline{\Delta}_m$, therefore
\begin{equation}
\norm{\sZ_n \circ E}^2 = 
\langle \sZ_n \circ E , \sZ_n \circ E \rangle =
\langle T_L^* T_L \sZ_{m}, \sZ_{m} \rangle ,
\end{equation}
where $T_L$ is the restriction matrix induced by the embedding $E$.
By Lemma~\ref{lemma:compactop}, $T_L^*T_L$ is trace-class and so $T_L$ is a Hilbert-Schmidt operator.
\end{proof}

When $r(z,\bar{z}) = \langle C \sZ , \sZ \rangle$ is positive
semi-definite, that is, has no negative eigenvalues, we write
\begin{equation*}
r(z,\bar{z}) = \langle A^*A \sZ, \sZ \rangle = \norm{A\sZ}^2 .
\end{equation*}
When everything is scaled appropriately so that $C$ is trace-class, then 
$A$ is Hilbert-Schmidt.

Next we note that we can write a finite rank holomorphic mapping
in a convenient way when working in general coordinates.
The idea is very similar to the technique used to
prove Green's theorem in \cite{Green:gin}.
Of course now we are working in nonhomogeneous
coordinates so we must make a generic choice of affine coordinates instead
of linear.  The following proposition allows us to prove our main result
by reducing to the case where each component of $A\sZ$ 
is a single monomial.

We use the same somewhat nonstandard reverse lexicographic order, or reverse lex,  as in
\cite{Green:gin}.  Reverse lex is the
common reverse lex ordering in each degree, but we order degrees by
putting the lower total degree first.   For example when $n=3$ we have
\begin{equation}
1 > z_1 > z_2 > z_3 > z_1^2 > z_1z_2 > z_2^2 > z_1z_3 > z_2 z_3 > z_3^2 >
\cdots .
\end{equation}
When writing down the monomials, for example in $\sZ$, we write then down in
decreasing order as written above.  In this ordering, the initial monomial
of a series is the maximal monomial that appears.
We call a set $\sM$ of monomials affine-Borel-fixed if whenever $z^\alpha \in \sM$
and $z_j | z^\alpha$, then $z^\alpha\frac{1}{z_j}$ and
$z^\alpha\frac{z_\ell}{z_j}$ is in $\sM$ for all $\ell < j$.
If a finite affine-Borel-fixed set of monomials 
is homogenized to the same degree via a new variable $z_0$, where $z_0 >
z_1$, then
the set is Borel-fixed in the standard
sense (that is, the affine step becomes same as the others).
%We normalize
%the initial monomials as follows.
As usual by pivot columns we mean the columns of the leading
terms after row reduction.
The following proposition is an analogue of Galligo's Theorem (see Theorem 1.27
in \cite{Green:gin}) to our setting.  The proof technique seems standard,
%(see proof of Theorem 15.20 in \cite{Eisenbud}),
but we include it for completeness.

\begin{prop} \label{prop:generalcoords}
Let $f(z) = A \sZ$ be a finite rank holomorphic mapping, with linearly independent components, from a neighborhood
of the closed unit polydisc $\overline{\Delta} \subset \C^n$ to
$\C^N$.  That is, $A$ is an $N \times \infty$ matrix of rank $N$.  Let $\sZ$
be given in the monomial order as above (or any multiplicative monomial
order).  Then there exists
an affine self mapping $\chi$ of $\C^n$ arbitrarily close to the identity,
and an $N \times N$ invertible
matrix $B$ such that
\begin{equation}
B A \sZ \circ \chi = \widetilde{A} \sZ .
\end{equation}
where $\widetilde{A}$ is a rank $N$ matrix in reduced row echelon form such that the
pivot columns correspond to an affine-Borel-fixed set of monomials.
Furthermore, the
set of such $\chi$ is an open dense subset of a neighborhood of the
identity in the space of affine maps.
\end{prop}

\begin{proof}
Let $A_p$ denote the first $p$ columns of the matrix $A$ followed by zeros.
Let $A^\chi$ be defined by $A^\chi \sZ = A \sZ \circ \chi$,
for affine maps $\chi$.
Because rank is given by looking at certain subdeterminants,
for a fixed $p$ the rank of ${(A^\chi)}_p$ is maximal
and constant on a Zariski open set in a neighborhood of the identity in the
space of affine maps $\chi$.  
There are only finitely many $p$ for which the rank of ${(A^\chi)}_p$
is less than $N$.  
The intersection
of finitely many Zariski open sets is Zariski open.
Therefore, after applying an affine map close to the identity we can assume that
for each $p$, the rank of ${(A^{\chi})}_p$ is maximal (constant)
for all $\chi$ in a neighborhood of the identity.

Take $q$ large enough such that $A_q$ is rank $N$.  The ranks of all submatrices
of ${(A^\chi)}_q$ achieve a maximum on a Zariski open set, and therefore
we assume that the ranks of all submatrices of ${(A^\chi)}_q$ are constant 
for all $\chi$ near the identity.

The $p$ where the rank of $A_p$ increases are precisely the columns with
the pivot elements.  We will show that these columns are
affine-Borel-fixed.

Suppose for contradiction that $p$ is the first pivot column that violates the
affine-Borel-fixed property in
the following way.  Without loss of generality suppose that $p$ corresponds
to $z^\alpha$ where $z_1 | z^\alpha$.  And suppose that the $j$-th column ($j
< p$) 
corresponding to $z^\alpha \frac{1}{z_1}$ does not contain a pivot.
The other moves mentioned above are similar, the map used below need not be
affine but linear.

Write $Z = (z_2,\ldots,z_n)$, and let $\beta \in \N_0^{n-1}$ be a
multi-index for $Z$ such that $\alpha = (m+1,\beta)$ for some $m \geq 0$.
Therefore,
by the fact that $\alpha$ is the first place where the affine-Borel-fixed
property is violated, we know that there exist pivots in columns
corresponding to
$(0,\beta), (1,\beta), \ldots, (m-1,\beta)$, and in $(m+1,\beta)$,
but not at $(m,\beta)$.  The pivot at $(m+1,\beta)$ corresponds 
to the last nonzero column in $A_p$.

%We now put $A$ into row reduced echelon form.  Let $\widetilde{A}$ be the submatrix of $A$
%obtained by taking the $m+1$ rows
%corresponding to the pivot entries for 
%$(0,\beta), (1,\beta), \ldots, (m-1,\beta)$, and $(m+1,\beta)$.  
%Then $\widetilde{A}_p$ is of rank $m+1$ and $\widetilde{A}_{p-1}$ is of rank $m$.
%Take the affine map $\chi_c$ that fixes all $z_j$ for $j=2,\ldots,n$ and takes $z_1$ to $z_1+c$.
%Now let $\widetilde{A}^{c}$ be the matrix of coefficients of $\widetilde{A} \sZ \circ \chi_c$.
%We claim that the ranks of the matrices $(\widetilde{A}^c)_q$ for all $q \leq p$ are constant as
%$c$ changes.  The claim follows because $\chi_c$ only affects columns with monomials 
%
%
%
%
%After certain row operations and then
%picking the right $m+1$ rows,
%we can, without loss of generality, suppose that $A_p$ is of rank $m+1$
%and has $m+1$ rows.  Then $A_{p-1}$ is of rank $m$.

Without loss of generality we assume that $\rank A_p = N$.
This simplification
is possible by taking all the pivot columns in $A_p$
and in each one
picking a pivot element that we would have used in the row reduction,
and then disregarding all rows without a pivot element.
The same pivot elements can be used after the simplification and
therefore the location of pivot columns in $A_p$ is
unchanged.

Let $\gamma \in \N_0^{n-1}$ run over 
the multi-indices for $Z$.
We write $f$ as
\begin{equation}
f(z) = f(z_1,Z) = \sum_{j,\gamma} w_\gamma^j z_1^j Z^\gamma
\end{equation}
for $N$-vectors $w_\gamma^j$.
Take the affine map
that replaces $z_1$ with  $z_1 + c$.  We then have
\begin{equation}
f(z_1 + c,Z) = \sum_{j,\gamma}
\left( \sum_{\ell=0}^\infty c^\ell \binom{j+\ell}{j}
w_\gamma^{j+\ell} \right) z_1^j Z^\gamma .
\end{equation}

Let 
the multi-indices $(k_1,\gamma_1),\ldots,(k_s,\gamma_s)$, where
$k_j \in \N_0$ and $\gamma_j \in \N_0^{n-1}$, $\gamma_j \not= \beta$,
correspond to the remaining pivot columns in $A_p$.  Note $s+m+1= N$.
Because $A_{p-1}$ is of rank lower than $N = \rank A_p$,
we have 
$\det(
w_{\beta}^{0} , \ldots, w_{\beta}^{m},
w_{\gamma_1}^{k_1} , \ldots, w_{\gamma_s}^{k_s}) = 0$.
%We reordered the columns for convenience---they are
%no longer neccessarily in the monomial order.
%\begin{multline}
%\det\Biggl(
%\sum_{\ell=0}^{\infty} c^\ell \delta^\beta
%\binom{{0}+\ell}{{0}}
%w_{\beta}^{0+\ell} ,
%\ldots ,
%\sum_{\ell=0}^{\infty} c^\ell \delta^\beta
%\binom{{m}+\ell}{{m}}
%w_{\beta}^{m+\ell} ,
%\\
%\sum_{\ell=0}^{\infty} c^\ell \delta^{\gamma_1}
%\binom{{k_1}+\ell}{{k_1}}
%w_{\gamma_1}^{k_1+\ell} ,
%\ldots ,
%\sum_{\ell=0}^{\infty} c^\ell \delta^{\gamma_s}
%\binom{{k_s}+\ell}{{k_s}}
%w_{\gamma_s}^{k_s+\ell} \Biggr)
%\\ =
%\det(
%w_{\beta}^{0},
%\ldots ,
%w_{\beta}^{m}
%,
%w_{\gamma_1}^{k_1},
%\ldots ,
%w_{\gamma_s}^{k_s}
%)
%+ 
%c
%\delta^\beta
%\Biggl(
%\binom{{0}+1}{{0}}
%\det(
%w_{\beta}^{{0}+1} ,
%w_{\beta}^{1},
%\ldots,
%w_{\beta}^{m}
%,
%w_{\gamma_1}^{k_1},
%\ldots ,
%w_{\gamma_s}^{k_s}
%)
%\\
%+
%\binom{{1}+1}{{1}}
%\det(
%w_{\beta}^{0},
% w_{\beta}^{{1}+1} ,
%\ldots,
%w_{\beta}^{m}
%,
%w_{\gamma_1}^{k_1},
%\ldots ,
%w_{\gamma_s}^{k_s}
%)
%+ \cdots
%\\
%+
%\binom{{m}+1}{{m}}
%\det(
%w_{\beta}^{0},
%w_{\beta}^{1} ,
%\ldots,
%w_{\beta}^{m-1}
%,
%w_{\beta}^{m+1}
%,
%w_{\gamma_1}^{k_1},
%\ldots ,
%w_{\gamma_s}^{k_s}
%)
%\Biggr)
%\\
%+
%c
%\delta^{\gamma_1}
%\binom{{k_1}+1}{{k_1}}
%\det(
%w_{\beta}^{0},
%\ldots,
%w_{\beta}^{m}
%,
%w_{\gamma_1}^{k_1+1},
%\ldots ,
%w_{\gamma_s}^{k_s}
%)
%+ \cdots
%\\
%+
%c
%\delta^{\gamma_s}
%\binom{{k_s}+1}{{k_s}}
%\det(
%w_{\beta}^{0},
%\ldots,
%w_{\beta}^{m}
%,
%w_{\gamma_1}^{k_1},
%\ldots ,
%w_{\gamma_s}^{k_s+1}
%)
%\\
% + \text{higher order terms in $c$}.
%\end{multline}
%
For simplicity, in the next calculation we write 
$W_1 = [w_{\beta}^{0}, \ldots , w_{\beta}^{m}]$ and
$W_2 = [w_{\gamma_1}^{k_1}, \ldots , w_{\gamma_s}^{k_s}]$.
We take the determinant of the same
columns after applying our affine map:
\begin{multline}
\det\Biggl(
\sum_{\ell=0}^{\infty} c^\ell
\binom{{0}+\ell}{{0}}
w_{\beta}^{0+\ell} ,
\ldots ,
\sum_{\ell=0}^{\infty} c^\ell
\binom{{m}+\ell}{{m}}
w_{\beta}^{m+\ell} ,
\\
\sum_{\ell=0}^{\infty} c^\ell
\binom{{k_1}+\ell}{{k_1}}
w_{\gamma_1}^{k_1+\ell} ,
\ldots ,
\sum_{\ell=0}^{\infty} c^\ell
\binom{{k_s}+\ell}{{k_s}}
w_{\gamma_s}^{k_s+\ell} \Biggr)
\\ =
\det(
W_1
,
W_2
)
+ 
c
\Biggl(
\binom{{0}+1}{{0}}
\det(
w_{\beta}^{{0}+1} ,
w_{\beta}^{1},
\ldots,
w_{\beta}^{m}
,
W_2
)
\\
+
\binom{{1}+1}{{1}}
\det(
w_{\beta}^{0},
 w_{\beta}^{{1}+1} ,
\ldots,
w_{\beta}^{m}
,
W_2
)
+ \cdots
%\\
+
\binom{{m}+1}{{m}}
\det(
w_{\beta}^{0},
w_{\beta}^{1} ,
\ldots,
%w_{\beta}^{m-1}
%,
w_{\beta}^{m+1}
,
W_2
)
\\
+
\binom{{k_1}+1}{{k_1}}
\det(
W_1
,
w_{\gamma_1}^{k_1+1},
\ldots ,
w_{\gamma_s}^{k_s}
)
+ \cdots
%\\
+
\binom{{k_s}+1}{{k_s}}
\det(
W_1
,
w_{\gamma_1}^{k_1},
\ldots ,
w_{\gamma_s}^{k_s+1}
)
\Biggr)
\\
 + \text{higher order terms in $c$}.
\end{multline}
By assumption the ranks are constant for all $c$ near zero and
so this function must be identically zero.  In particular
the coefficient of $c$ must be zero:
\begin{multline}
%\binom{{0}+1}{{0}}
\det(
w_{\beta}^{{0}+1} ,
w_{\beta}^{1},
\ldots,
w_{\beta}^{m}
,
w_{\gamma_1}^{k_1},
\ldots ,
w_{\gamma_s}^{k_s}
)
+
2
%\binom{{1}+1}{{1}}
\det(
w_{\beta}^{0},
 w_{\beta}^{{1}+1} ,
\ldots,
w_{\beta}^{m}
,
w_{\gamma_1}^{k_1},
\ldots ,
w_{\gamma_s}^{k_s}
)
\\
+ \cdots +
%\binom{(m-1)+1}{{m-1}}
m
\det(
w_{\beta}^{0},
w_{\beta}^{1} ,
\ldots,
w_{\beta}^{(m-1)+1}, 
w_{\beta}^{m}
,
w_{\gamma_1}^{k_1},
\ldots ,
w_{\gamma_s}^{k_s}
)
\\
+
%\binom{{m}+1}{{m}}
(m+1)
\det(
w_{\beta}^{0},
w_{\beta}^{1} ,
\ldots,
w_{\beta}^{m-1}, 
w_{\beta}^{m+1}
,
w_{\gamma_1}^{k_1},
\ldots ,
w_{\gamma_s}^{k_s})
\\
+
%\binom{{k_1}+1}{{k_1}}
({k_1}+1)
\det(
w_{\beta}^{0} ,
\ldots,
w_{\beta}^{m}
,
w_{\gamma_1}^{k_1+1},
\ldots ,
w_{\gamma_s}^{k_s}
)
+ \cdots
%\\
+
({k_s}+1)
\det(
w_{\beta}^{0} ,
\ldots,
w_{\beta}^{m}
,
w_{\gamma_1}^{k_1},
\ldots ,
w_{\gamma_s}^{k_s+1}
)
=  0 .
\end{multline}
The first $m$ determinants are trivially zero as there are repeated columns.
We next show that for all $j=1,\ldots,s$ we get
$\det(
w_{\beta}^{0} ,
\ldots,
w_{\beta}^{m}
,
w_{\gamma_1}^{k_1},
\ldots ,
w_{\gamma_j}^{k_j+1},
\ldots ,
w_{\gamma_s}^{k_s}
)
=  0$.
Consider two cases.
First suppose that in the monomial
ordering $(k_j,\gamma_j) > (m,\beta)$.  As the columns of $A$ are
sorted in decreasing order, the column for
$(k_j,\gamma_j)$ is to the left of the column for $(m,\beta)$.
The ordering is multiplicative, and so
$(k_j+1,\gamma_j) > (m+1,\beta)$.
If $w_{\gamma_j}^{k_j+1}$ is a pivot column, then it must be equal to
$w_{\gamma_i}^{k_i}$ for some $i \not= j$, and the determinant
must be zero.  So suppose that 
$w_{\gamma_j}^{k_j+1}$ is not a pivot column.
As both $w_{\beta}^m$ and 
$w_{\gamma_j}^{k_j+1}$ are
not pivot columns, they are linear combinations of the preceeding
columns.
In particular 
$w_{\beta}^{m+1}$ is
not a vector in the 
linear combination for either
$w_{\beta}^m$ or
$w_{\gamma_j}^{k_j+1}$.
Therefore the determinant must be 0.

Let us next suppose
$(m,\beta) > (k_j,\gamma_j)$.
We write $w_{\beta}^m$ as a linear
combination of preceeding columns, and note that the
linear combination includes neither
$w_{\beta}^{m+1}$ nor
$w_{\gamma_j}^{k_j}$.  Again this implies that the determinant
must be zero.

Therefore,
\begin{equation}
\det(
w_{\beta}^{0},
w_{\beta}^{1} ,
\ldots,
w_{\beta}^{m-1}, 
w_{\beta}^{m+1}
,
w_{\gamma_1}^{k_1},
\ldots ,
w_{\gamma_s}^{k_s}
)
= 0 .
\end{equation}
This is a contradiction as $w_{\beta}^{m+1}$ is the $p$th column
that we assumed to have a pivot entry.  These are
all the pivot entries and the matrix
$[w_{\beta}^{0}, w_{\beta}^{1} , \ldots, w_{\beta}^{m-1}, w_{\beta}^{m+1},
w_{\gamma_1}^{k_1},
\ldots ,
w_{\gamma_s}^{k_s}]$
has $N$ pivots and the determinant should be
nonzero.

The other non-affine moves are similar, but instead of $z_i + c$ we take
$z_i + c z_j$ where $j < i$.
We finish by row reducing the coefficient matrix.
\end{proof}

We now prove the real-analytic version of the main theorem on rank of
Hermitian forms for positive semi-definite forms.
The proof reduces to the algebraic version.
%Not really: Theorem~\ref{finiterankres}.
However,
note that a key point here is that the rank of $r$ is not required to be
finite.  As before a generic subset $\sL \subset G_{m,n}$ is a subset not
contained in any proper complex algebraic subvariety of $G_{m,n}$, that is, no
nontrivial polynomial vanishes on $\sL$.

\begin{lemma} \label{lemma:finiterankrespos}
Let $n \geq 2$ and $1 \leq m \leq n-1$.
There exists a function 
$K_{m,n} \colon \N_0 \to \N_0$ with the following property.
For any $r(z,\bar{z})$ positive semi-definite real-analytic function whose
complexified power series at 0 converges in a neighborhood of
$\overline{\Delta} \times \overline{\Delta} \subset \C^n \times \C^n$,
and $\sL \subset G_{m,n}$ a generic subset so that all the corresponding
manifolds intersect $\Delta$ and
\begin{equation}
\sup_{L \in \sL} \,\rank r|_L < \infty .
\end{equation}
Then $\rank r$ is finite and moreover
\begin{equation}
\rank r \leq K_{m,n}\left(
\max_{L \in \sL} \,\rank r|_L
\right) .
\end{equation}
\end{lemma}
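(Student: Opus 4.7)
The plan is to reduce to the bihomogeneous polynomial Corollary \ref{cor:rankpospoly} via a spectral truncation of $C$ followed by a polynomial approximation, and argue by contradiction.

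First, by Lemma \ref{lemma:compactop} together with the positive semi-definite hypothesis, I would factor $C = A^*A$ with $A$ Hilbert-Schmidt, so that $r = \|A\sZ\|^2$ and, by Lemma \ref{lemma:restcompact}, $\rank r|_L = \rank AT_L \leq k$ for every $L \in \sL$. Suppose for contradiction that $\rank A > K_{m,n}(k)$ and set $N = K_{m,n}(k)+1$. Using the spectral decomposition $C = \sum_j \lambda_j v_j v_j^*$ with $\lambda_j > 0$ decreasing, form the top rank-$N$ compression $C^{(N)} = \sum_{j=1}^N \lambda_j v_j v_j^*$. Then $0 \leq C^{(N)} \leq C$ as operators, and this inequality is preserved under conjugation by $T_L$; since $0 \leq X \leq Y$ forces $\ker Y \subseteq \ker X$, this gives $\rank r^{(N)}|_L \leq \rank r|_L \leq k$ for every $L \in \sL$, where $r^{(N)} := \langle C^{(N)}\sZ, \sZ\rangle$ has rank exactly $N$. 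Write $r^{(N)} = \|B\sZ\|^2$ with $B$ the rank-$N$ Hilbert-Schmidt operator whose rows are $\sqrt{\lambda_j}\, v_j^*$.

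Second, I would pass from $r^{(N)}$ to a polynomial by truncating the source monomials: letting $Q_d$ denote the projection onto monomials of degree at most $d$, the function $r^{(N,d)} := \|BQ_d\sZ\|^2$ is a positive semi-definite polynomial of bidegree at most $(d,d)$ with $\rank r^{(N,d)} = N$ for $d$ sufficiently large. After homogenizing $r^{(N,d)}$ to a bihomogeneous polynomial on $\C^{n+1}$, Corollary \ref{cor:rankpospoly} yields $N \leq K_{m,n}(\max_{L \in \sL} \rank r^{(N,d)}|_L)$; a uniform bound $\rank r^{(N,d)}|_L \leq k$ on a generic subset of $L$'s for some $d$ would then contradict $N > K_{m,n}(k)$, closing the proof.

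The hard part is precisely this uniform rank control. Writing $\rank r^{(N,d)}|_L = \rank BQ_d T_L$, although $BQ_d T_L \to BT_L$ in Hilbert-Schmidt norm as $d \to \infty$ with $\rank BT_L \leq k$, rank is only lower semi-continuous, so the polynomial approximants can carry extra small singular values whose ranks may exceed $k$. To handle this, I would exploit the finite-matrix structure of $BQ_d$: each entry of $BQ_d T_L$ is then a polynomial in the matrix $V$ parameterizing the embedding of $L$, so the condition $\rank BQ_d T_L \leq k$ cuts out a complex algebraic subvariety of the parameter space. Combining the algebraic-generic hypothesis on $\sL$ with a careful analysis of the limit $d \to \infty$, in which the spurious singular values of $BQ_d T_L$ must decay to zero, should yield the required uniform bound, close the contradiction, and produce the final estimate $\rank r \leq K_{m,n}(k)$.
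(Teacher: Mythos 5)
There is a genuine gap, and you have located it yourself: the uniform rank control for the degree truncation is not a technicality you can defer, it is the whole difficulty, and the fix you sketch cannot work. Inserting the projection $Q_d$ between $B$ and $T_L$ gives no inequality of the form $\rank BQ_dT_L \leq \rank BT_L$ in either direction (already $B=[1\;1]$, $T_L=[1\;-1]^{T}$ shows $BT_L=0$ while $BQ_1T_L\neq 0$), and the observation that the spurious singular values of $BQ_dT_L$ decay to zero is of no use, because rank counts every nonzero singular value no matter how small; likewise the algebraic-genericity argument needs the bound $\rank BQ_dT_L\leq k$ to hold on \emph{some} generic family of $L$'s as a starting point, and your construction provides it for none. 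So the contradiction is never reached.

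The paper's proof avoids this by truncating in a direction that trivially preserves the bound. First it uses genericity at the outset (the minors of $AT_L$ are holomorphic in the matrix $V$ parameterizing $L$) to upgrade $\rank AT_L\leq\alpha$ from $\sL$ to all $L$, then reduces to hyperplanes by factoring $T_L=T_ST_H$. The key structural point, equation \eqref{eq:THdecomp}, is that $T_H$ is block-triangular with respect to the degree filtration, so that $A'T_{\vec c}^{\prime d}$ --- the restriction matrix of the degree-$\leq d$ truncation, with $A'$ the corresponding columns of $A$ --- is literally a submatrix (a subset of the columns) of $AT_H$; hence $\rank A'T^d_{\vec c}\leq\alpha$ with no limiting argument, and Lemma~\ref{lemma:green} applies once $d$ is large enough that $\rank A'=\rank A$. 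The infinite-rank case is then handled by keeping only the first $\nu$ \emph{rows} of $A$, which only deletes rows of $AT_H$ and so again preserves the bound, followed by a Hilbert--Schmidt limit. Your spectral compression $C^{(N)}\leq C$ in the first step is a correct (and slightly slicker) substitute for that row truncation, but without the block-triangularity observation the passage to a polynomial, and hence to Corollary~\ref{cor:rankpospoly}, does not go through.
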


\begin{proof}
When referring to the homogeneous polynomial results
of sections \ref{section:thesetup} and \ref{section:greens},
let us keep in mind that we are thinking of $\C^n$ as an open subset of
$\C\bP^n$ via setting $z_0 = 1$, and hence here our variables
are $z = (z_1,\ldots,z_n) \in \C^n$.

For an affine manifold $L$ that passes through the unit polydisc, we find
an embedding $E \colon \C^{m} \to H$ such that $E$ takes the
closed unit polydisc $\overline{\Delta}_{m} \subset \C^{m}$
to a subset of $H \cap \Delta_n$.  The induced restriction matrix $T_L$
is a Hilbert-Schmidt operator by Lemma~\ref{lemma:restcompact}.

As before,
we assume that $L$ is a graph over the first $m$ variables
and we assume that it contains a point of the
set $\{ z \colon z_1 = z_2 = \cdots = z_{m} = 0, \abs{z_j} < 1 \text{ for }
j = m+1,\ldots,n \}$.  So assume that $\sL$ contains only such $L$.
As in the proof of Theorem~\ref{finiterankres} we
identify $\sL$ with a subset $\sV \subset M_{n-m,m+1}$.
For such $L$ we take a dilation of the first $m$ variables
to obtain an embedding $E$ that maps the closed unit polydisc
$\overline{\Delta}_{m}$ into
$\Delta_n$, to ensure that $T_L$ is a Hilbert-Schmidt operator.  Without
loss of generality, we also assume
that the dilation is the same for all $L \in \sL$.  Therefore
we again assume that entries of $T_L$ depend holomorphically 
on the entries of $V \in \sV$.

Let $k = \max_{L \in \sL} \,\rank r|_L$.  Write
\begin{equation}
r(z,\bar{z}) = \langle A^*A \sZ, \sZ \rangle .
\end{equation}
Thus for $L \in \sL$ we have
\begin{equation} \label{eq:atlrankalpha}
\rank A T_L \leq k .
\end{equation}
As for the finite case, this condition is defined by the vanishing
of polynomials in the entries of $A T_L$, these depend holomorphically on the
entries of $V$.  Thus \eqref{eq:atlrankalpha} is true for
all affine manifolds of dimension $m$ that intersect $\Delta$.

Unless $m=n-1$ we again write $T_L = T_S T_H$
for a hyperplane $H$ in $S$, where $S$ is
an affine manifold of dimension $m+1$.  Hence if we prove the result
of the theorem for hyperplanes, the general case follows by induction
as before.

Therefore assume that
\begin{equation} \label{eq:atHrankalpha}
\rank r|_H \leq k .
\end{equation}
for all affine hyperplanes $H$ that intersect $\Delta$.
Then we have that
$\rank A T_H \leq k$ for all such hyperplanes $H$.

We first assume that $A$ is finite rank.  The maximum rank of $r|_H$ is achieved on a generic affine hyperplane.  
After a generic affine change of variables we assume that
$H$ is given by $\{ z_n = 0 \}$.  
Let $\sZ$ be in the reverse lex monomial order.
Using Proposition~\ref{prop:generalcoords}
we assume also that $A$ is in row reduced echelon form and
the pivot columns of $A$ are affine-Borel-fixed.
%That is,
%in any given degree, the monomials with high $z_n$ powers come last.
If we take the identity embedding of our hyperplane, then the restriction
matrix is simply a matrix $T_H$ with zeros everywhere except a
1 in each column in the row corresponding to monomials not depending on
$z_n$.  That is, $AT_H$ contains those columns of $A$
that correspond to monomials that do not depend on $z_n$.  We compute
\begin{equation}
\rank r|_H = \rank AT_H \geq \rank \, P T_H,
\end{equation}
where $P$ is the matrix with just the pivot columns from $A$ left
and all other columns set to 0.  The matrix $P$ is a matrix for a
polynomial of degree $d$, therefore we now truncate the series,
homogenize with $z_0$ and order the monomials in $\sZ_{n,d}$ according
to reverse lex ordering.  Let us call $P'$ this new finite
matrix that only goes up to degree $d$, and $T'_H$ the new truncated finite
restriction matrix for up to degree $d$.  The pivot columns in $P'$ (the
only columns with nonzero entries) are still of course Borel-fixed.
Therefore, a generic small linear change of coordinates in $\C^{n+1}$
does not change the
pivot columns.  Furthermore, since all monomials are now homogeneous
of degree $d$, and in reverse lex ordering, once a monomial is divisible by $z_n$
all the monomials to the right are also divisible by $z_n$.  Therefore,
a small linear change of coordinates does not change the rank
of the restriction to $H = \{ z_n = 0 \}$.
We apply Lemma~\ref{lemma:green} to obtain the desired bound
\begin{equation}
\rank r = N = \rank \, P' \leq K_n(\rank \, P' T'_H) \leq K_n(\rank r|_H)
.
\end{equation}

Now let us drop the assumption that the rank of $A$ is finite.
Let $A_\nu$ be an infinite by infinite
matrix consisting of the first $\nu$ rows of $A$
followed by
all zero rows.  Obviously $A_\nu$ is of finite rank.
We have that
\begin{equation}
\rank A_\nu T_H \leq \rank A T_H \leq k .
\end{equation}
It follows that $\rank A_\nu \leq K_n(k)$ for all $\nu$.

From the proof of
Lemma~\ref{lemma:compactop} we note that the entries
of $A^*A$
are absolutely summable and hence the entries of $A$ are square summable.
Therefore $A_\nu$ converges to $A$ in the Hilbert-Schmidt norm.
Then as
the rank of $A_\nu$ is uniformly bounded by
$K_n(k)$, we obtain
\begin{equation} \label{eq:therankbound}
\rank A \leq K_n(k) .
\end{equation}
To see this fact,
simply pick a finite orthonormal set $\{ Aw_j \}$
in the range of $A$ and then notice that
$A_\nu w_j$ must converge to $A w_j$ as $\nu \to \infty$.
Thus for large enough $\nu$, the set
$\{ A_\nu w_j \}$ must be linearly independent (and hence of cardinality
less than $K_n(k)$).
\end{proof}

\begin{remark} \label{remark:toprovegreen}
%The application of Green's theorem may be somewhat superfluous as we have already
%done quite a bit of the heavy lifting required to prove
%Lemma~\ref{lemma:green}.
We have already done quite a bit of the heavy lifting required to prove
Lemma~\ref{lemma:green} outright.
To finish the proof we must find the maximal
size of an affine-Borel-fixed set of monomials such that at most $k$ of them
do not depend on $z_n$.
Precisely this combinatorics is done in \cite{Green:gin}.
The bound $K_n(k)$ is then the maximum size of
such a set of monomials.
It is instructive to see how the bound $K_2(k) = \frac{k(k+1)}{2}$ 
when $n=2$ can be obtained.  When we restrict to $z_2 = 0$,
then in each degree $d$, only one monomial out of at most $d+1$ survives.

We use this formulation of $K_n(k)$ to obtain a simpler bound on
its size.  We look at how $K_n(k)$ increases as $k$ increases.
The largest increase is when we are allowed to take all monomials that
depend on $z_n$ up to degree $d$.  Thus,
\begin{equation}
K_n\left(\binom{n+d-1}{n-1}\right)
=
\binom{n+d}{n}
=
\left(1+ \frac{d}{n}\right)
\binom{n+d-1}{n-1} .
\end{equation}
As $K_n(k)$ grows
most when $k=\binom{n+d-1}{n-1}$, we obtain a general bound for all $k$
by looking at these specific $k$.

Let $k = \binom{n+d-1}{n-1}$.
Using the crude estimate
$k \geq {\left(\frac{n+d-1}{n-1}\right)}^{n-1}$, we get
\begin{equation}
(n-1) k^{1/(n-1)} - n + 1
\geq 
d ,
\end{equation}
and therefore
\begin{equation}
K_n(k) \leq
k \left(1+ \frac{(n-1) k^{1/(n-1)} - n + 1}{n}\right)
%=
%k \left(\frac{(n-1) k^{1/(n-1)} + 1}{n}\right)
=
\frac{n-1}{n} \, k^{\frac{n}{n-1}} + \frac{1}{n} \,k .
\end{equation}
\end{remark}

\begin{example} \label{example:posnes}
Let us show that positivity of $r$ is necessary.  First note that
the set $\sL$ of affine complex manifolds of dimension 1
that lie in
\begin{equation}
Q(2,1) = \{ z \in \C^3 : \abs{z_1}^2 + \abs{z_2}^2 - \abs{z_3}^2 = 1 \} 
\end{equation}
forms a generic set in $G_{1,3}$.  See section \ref{section:crmaps}
for a proof of this fact.
Let
\begin{equation}
r(z,\bar{z}) = (\abs{z_1}^2 + \abs{z_2}^2 - \abs{z_3}^2)^d .
\end{equation}
Then for $z \in Q(2,1)$
\begin{equation}
r(z,\bar{z}) = (\abs{z_1}^2 + \abs{z_2}^2 - \abs{z_3}^2)^d = 1 .
\end{equation}
Therefore the rank of $r|_L$ for $L \in \sL$ is always 1.  However
the rank of $r$ is $\binom{2+d}{d}$.

For an infinite rank example, note that
$e^r$ is of infinite rank but the rank of $e^r|_L$ is 1.
\end{example}

We can now prove Theorem~\ref{thm:rankres}.  That is, we can drop positivity
of $r$ if we assume that the rank is bounded on all hyperplanes.  The proof
is very similar to the proof of Lemma~\ref{lemma:hyperplanesalg}.

\begin{proof}[Proof of Theorem~\ref{thm:rankres}]
Suppose $r$ is a nonzero Hermitian form.  We assume that $r$ converges
in a neighborhood of the closed unit polydisc as above.
Write $r(z,\bar{z})=\langle C \sZ_n, \sZ_n \rangle$.
It is again enough to prove the
theorem for hyperplanes.
Restricting $r$ to a hyperplane $H$ that intersects the closed unit polydisc,
we have 
\begin{equation}
r|_H(z,\bar{z})=\langle T_H^* C T_H \sZ_{n-1}, \sZ_{n-1} \rangle.
\end{equation}
Let $k=\max_{H} \,\rank r|_H= \max_{H} \rank{T_H^*CT_H}$  where $H$ ranges
over all complex hyperplanes.
As before, the set of hyperplanes $H_{\vec{c}}$, defined by
$1=c_1z_1 + \cdots + c_n z_n$, is open and dense in the set
of all hyperplanes.  For an open set $U \subset \C^n$ of $\vec{c}$
giving hyperplanes that intersect
the unit polydisc, we take
an embedding, which depends holomorphically on $\vec{c}$, that takes the
the closed unit polydisc into the unit polydisc.  In what follows,
$\vec{c}$ will vary over $U$.
\begin{equation}
k=\max_{\vec{c} \in U} \, \rank r|_{H_{\vec{c}}} =
\max_{\vec{c}} \,\rank
{(T^d_{\vec{c}})^*CT^d_{\vec{c}}}
=
\max_{\bar{c}_1,\ldots, \bar{c}_n,c_1,\ldots, c_n}
\,\rank
{(T^d_{\vec{c}})^*CT^d_{\vec{c}}} .
%\geq \rank_{\bC[\bar{c}_1,\ldots, \bar{c}_n,c_1,\ldots, c_n]} {(T^d_{\vec{c}})^*CT^d_{\vec{c}}}.
\end{equation}
Rank is defined by vanishing of certain subdeterminants.  By polarizing
we note that we can
treat $c_1,\ldots,c_n$ and $\bar{c}_1,\ldots,\bar{c}_n$
as separate variables in the second maximum above.
%Again we polarize and apply the semi-definite result twice.

Let $D= C T^d_{\vec{c}}$, $m= \rank C$, and let $k_1=
\max_{c_1,\ldots,c_n} \rank D$.  Note that we still allow the ranks to be
infinite.  Applying the bound
\eqref{eq:therankbound} from the proof of
Lemma~\ref{lemma:finiterankrespos} we obtain
\begin{equation}
m \leq K_n(k_1) .
\end{equation}

For a generic choice of $c_1, \ldots, c_n$, we have
\begin{equation}
k_1 = \rank D = \rank D^*.
\end{equation}

Likewise, taking generic choices of $\bar{c}_1, \ldots,
\bar{c}_n$, we have
\begin{equation}
%\rank_{\bC[\bar{c}_1,\ldots, \bar{c}_n,c_1,\ldots, c_n]} {(T_{\vec{c}})^*CT_{\vec{c}}} = \rank_{\bC[c_1,\ldots, c_n]} {D^{*}T_{\vec{c}}}
k=
\max_{\bar{c}_1,\ldots, \bar{c}_n,b_1,\ldots, b_n} \,
\rank {(T_{\vec{c}})^*CT_{\vec{b}}} =
\max_{b_1,\ldots,b_n} \, \rank {D^{*}T_{\vec{b}}} .
\end{equation}
Again applying the bound \eqref{eq:therankbound} we get
\begin{equation}
k_1 \leq K_n(k).
\end{equation}
As $k$ is finite, then $k_1$ is finite and hence $m\leq
K_n\bigl(K_n(k)\bigr)$.  Defining $R_n(k)$ as in the proof of Lemma \ref{lemma:hyperplanesalg} completes the proof.
\end{proof}

%%%%%%%%%%%%%%%%%%%%%%%%%%%%%%%%%%%%%%%%%%%%%%%%%%%%%%%%%%%%%%%%%%%%%%%%%%%%%

\section{CR mappings of hyperquadrics} \label{section:crmaps}

In this section we apply Theorem~\ref{thm:rankres} on
bounding the rank of Hermitian forms by the rank of restrictions
to prove Theorem~\ref{thm:rigidity} on rigidity of CR mappings
between hyperquadrics.

First let us note how to construct 
CR mappings of hyperquadrics from a real-analytic
function $r(z,\bar{z})$ vanishing on the hyperquadric.

\begin{prop}
Let $p \in Q(a,b)$ be a point and let
$\Omega \subset \C^{a+b}$ be a connected open set 
with $p \in \Omega$.
Let $r \colon \Omega \to \R$ be nonzero real-analytic function of finite
rank with
$A$ positive and $B$ negative
eigenvalues such that $r$ vanishes on $Q(a,b) \cap \Omega$.

Then there exists a point $q$ arbitrarily close to $p$
and a neighborhood $U \subset Q(a,b)$ of $q$,
and a real-analytic CR mapping $F \colon U \to Q(A,B-1)$
whose image is not contained in a complex hyperplane.
\end{prop}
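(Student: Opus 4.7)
The plan is to use the spectral decomposition of $r$ together with a division trick to produce $F$, and to verify non-containment in a hyperplane by exploiting the generic CR structure of $Q(a,b)$.

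First I would invoke the spectral decomposition developed in Section~\ref{section:racase}: after translating and rescaling so that the complexified series of $r$ converges in a neighbourhood of a closed polydisc centred at $p$, the coefficient matrix $C$ of $r$ is a Hermitian finite-rank trace-class operator with $A$ positive and $B$ negative eigenvalues, so we may write
\begin{equation}
r(z,\bar{z}) \;=\; \sum_{j=1}^{A}\abs{f_j(z)}^2 \;-\; \sum_{j=1}^{B}\abs{g_j(z)}^2
\end{equation}
on a neighbourhood of $p$, with $f_1,\ldots,f_A,g_1,\ldots,g_B$ linearly independent holomorphic functions coming from an orthonormal eigenbasis of $C$, exactly as in~\eqref{eq:holexp}. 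Because $r\equiv 0$ on $Q(a,b)\cap\Omega$ this yields
\begin{equation}
\sum_{j=1}^{A}\abs{f_j(z)}^2 - \sum_{j=1}^{B-1}\abs{g_j(z)}^2 \;=\; \abs{g_B(z)}^2 \qquad\text{on } Q(a,b)\cap\Omega .
\end{equation}

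Next I would divide by $g_B$. Since $g_B$ is a nonzero holomorphic function on $\Omega$ and $Q(a,b)$ is a real-analytic generic CR hypersurface, $g_B$ cannot vanish on any open piece of $Q(a,b)\cap\Omega$ without vanishing identically on $\Omega$, which would contradict linear independence. Hence there is a point $q\in Q(a,b)\cap\Omega$ arbitrarily close to $p$ with $g_B(q)\neq 0$, and a neighbourhood $U\subset Q(a,b)$ of $q$ on which $g_B$ is nowhere zero. Set
\begin{equation}
F \;:=\; \frac{1}{g_B}\bigl(f_1,\ldots,f_A,\; g_1,\ldots,g_{B-1}\bigr),
\end{equation}
which is holomorphic, hence real-analytic CR, on a neighbourhood of $U$. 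Dividing the identity above by $\abs{g_B}^2$ gives $\sum_{j=1}^{A}\abs{F_j}^2-\sum_{j=1}^{B-1}\abs{F_{A+j}}^2=1$ on $U$, so $F(U)\subset Q(A,B-1)$.

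Finally, to verify that $F(U)$ is not contained in any complex affine hyperplane, suppose for contradiction that $\alpha_0+\sum_{j=1}^{A+B-1}\alpha_j F_j\equiv 0$ on $U$ with $(\alpha_1,\ldots,\alpha_{A+B-1})\neq 0$. Clearing the denominator $g_B$ gives
\begin{equation}
\alpha_0\,g_B + \sum_{j=1}^{A}\alpha_j f_j + \sum_{j=1}^{B-1}\alpha_{A+j} g_j \;\equiv\; 0 \qquad\text{on } U,
\end{equation}
a holomorphic function vanishing on an open piece of the generic real hypersurface $Q(a,b)$. The main obstacle is precisely this step: one must invoke the standard fact that a holomorphic function vanishing on an open subset of a real-analytic generic CR hypersurface must vanish on an ambient complex neighbourhood. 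The cleanest argument I would use is to complexify the defining relation $\rho(z,\bar{z})=0$ to $\rho(z,w)=0$ on $\Omega\times\Omega^{*}$ and note that the projection of this smooth complex hypersurface onto the $z$-factor is open, forcing the holomorphic identity above to hold in an ambient open set. Granting this, the displayed identity contradicts the linear independence of $f_1,\ldots,f_A,g_1,\ldots,g_B$, so every $\alpha_j$ must vanish, and $F(U)$ is not contained in any hyperplane.
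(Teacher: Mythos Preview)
Your proof is correct and follows essentially the same route as the paper's: decompose $r$ as $\sum\abs{f_j}^2-\sum\abs{g_j}^2$ with linearly independent $f_j,g_j$, divide by $g_B$, and pick $q$ near $p$ where $g_B\neq 0$. The paper's proof is terser and leaves the verification that $F(U)$ is not contained in a hyperplane implicit in the linear independence of the $f_j,g_j$; your additional paragraph making this explicit (via the standard fact that a holomorphic function vanishing on an open piece of a generic real hypersurface vanishes identically) is a welcome clarification and is carried out correctly.
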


\begin{proof}
Write
\begin{equation}
r(z,\bar{z}) =
\sum_{j=1}^{A} \abs{f_j(z)}^2
-
\sum_{j=1}^{B} \abs{g_j(z)}^2 ,
\end{equation}
with $f_j$ and $g_j$ linearly independent.
The mapping
\begin{equation}
z \mapsto \Bigl(
\frac{f_1(z)}{g_B(z)} ,
\ldots ,
\frac{f_A(z)}{g_B(z)} ,
\frac{g_1(z)}{g_B(z)} ,
\ldots ,
\frac{g_{B-1}(z)}{g_B(z)}
\Bigr)
\end{equation}
is a meromorphic mapping taking $Q(a,b)$ to $Q(A,B-1)$.  If $g_B(p) \not =
0$, we are finished.  Otherwise note that
the set $\{ g_B = 0\} \cap Q(a,b)$ is nowhere dense in $Q(a,b)$
and hence there is a point $q \in Q(a,b)$ arbitrarily close
where $g_B(q) \not= 0$ and a holomorphic mapping defined near $q$ taking
$Q(a,b)$ to $Q(A,B-1)$.
\end{proof}

Therefore if $r$ vanishes on $Q(a,b)$ and has signature pair $(A,B)$
then $r$ induces a mapping from $Q(a,b)$ to $Q(A,B-1)$.

Let us prove a lemma about signature pairs of functions vanishing on
$Q(a,b)$ before proving 
Theorem~\ref{thm:rigidity}.

\begin{lemma} \label{lemma:rigidsigpair}
Let $a \geq 2$,  $b \geq 1$, and $a > b$.
Let $\Omega \subset \C^{a+b}$ be a connected open
set such that $\Omega \cap Q(a,b)$ is nonempty and
$r \colon \Omega \to \R$ be a real-analytic function
vanishing on $\Omega \cap Q(a,b)$ with signature pair
$(A,B)$.
If $B < \infty$ then
\begin{equation}
A \leq M(a,b,B) ,
\end{equation}
where $M = M(a,b,B)$ is a constant depending only on $a$, $b$, and $B$.
\end{lemma}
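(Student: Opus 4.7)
The plan is to decompose $r$ into a difference of positive semi-definite forms and then apply Lemma~\ref{lemma:finiterankrespos} to the positive part, using complex affine lines contained in $Q(a,b)$ as the generic family of restriction manifolds. After translating a base point $p \in Q(a,b) \cap \Omega$ to the origin and rescaling so that the complexified series of $r$ converges in a neighborhood of $\overline{\Delta} \times \overline{\Delta}$, the spectral decomposition of Section~\ref{section:racase} applied to the trace-class coefficient operator of $r$ produces
\begin{equation*}
r(z,\bar z) = \sum_{j=1}^{A} \abs{f_j(z)}^2 - \sum_{j=1}^{B} \abs{g_j(z)}^2 =: r_+(z,\bar z) - r_-(z,\bar z) ,
\end{equation*}
with $r_\pm$ positive semi-definite of ranks $A$ and $B$, and $A$ a priori allowed to be infinite.

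Let $\sL \subset G_{1,a+b}$ be the family of complex affine lines contained in $Q(a,b)$ that meet $\Delta$. Since $b \geq 1$, null directions for the indefinite Hermitian form exist through every point, so $\sL$ is nonempty; indeed a line $\{p + t v : t \in \bC\}$ lies in $Q(a,b)$ precisely when $p \in Q(a,b)$, $\sum \epsilon_j \abs{v_j}^2 = 0$, and $\sum \epsilon_j \bar p_j v_j = 0$ (with $\epsilon_j = \pm 1$ encoding the signature). For each $L \in \sL$, the hypothesis $r \equiv 0$ on $Q(a,b) \cap \Omega$ together with the analyticity of $r$ forces $r|_{L \cap \Delta} \equiv 0$, so $r_+|_L = r_-|_L$ and
\begin{equation*}
\rank r_+|_L = \rank r_-|_L \leq \rank r_- = B .
\end{equation*}
Provided $\sL$ is generic in $G_{1,a+b}$ in the sense of Lemma~\ref{lemma:finiterankrespos}, applying that lemma to the positive semi-definite function $r_+$ with $m=1$ and $n=a+b$ then gives $A = \rank r_+ \leq K_{1,a+b}(B)$, so I would set $M(a,b,B) := K_{1,a+b}(B)$.

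The step I expect to be the main obstacle is verifying that $\sL$ is a generic subset of $G_{1,a+b}$, meaning that no nontrivial holomorphic polynomial on the Grassmannian vanishes on $\sL$. The defining equations for $\sL$ involve complex conjugation, so $\sL$ is a CR-like subset of $G_{1,a+b}$ rather than a complex-algebraic one. I would establish genericity by selecting a smooth point of $\sL$ (for instance a line through the base point $p$ in a fixed null direction) and showing that its real tangent vectors, together with their complex conjugates, span the entire holomorphic tangent space of $G_{1,a+b}$: any holomorphic polynomial vanishing on $\sL$ would then be forced to vanish identically. This generalizes the direct verification alluded to in Example~\ref{example:posnes} for $Q(2,1)$; the key point is that the conjugation entering the defining equations is nondegenerate in every coordinate direction of $\bC^{a+b}$, which precludes any purely holomorphic relation among the parameters $(p,v)$.
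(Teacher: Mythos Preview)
Your approach is essentially the paper's: decompose $r$ as a difference of positive semi-definite forms, restrict to affine subspaces contained in $Q(a,b)$ so that the positive and negative parts coincide there, and feed the positive part into Lemma~\ref{lemma:finiterankrespos}. Two differences are worth recording.

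First, the paper restricts to $b$-dimensional affine planes rather than lines. Writing $(z',z'')\in\C^a\times\C^b$ so that $Q(a,b)=\{\lVert z'\rVert^2-\lVert z''\rVert^2=1\}$, the $b$-planes $z'=V\bigl[\begin{smallmatrix} z''\\ 1\end{smallmatrix}\bigr]$ with $V\in M_{a,b+1}$ having orthonormal columns lie in $Q(a,b)$; this yields the sharper bound $A\le K_{b,a+b}(B)$ in place of your $K_{1,a+b}(B)$. Either bound proves the lemma, but the larger-dimensional family makes the genericity step cleaner.

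Second, for genericity the paper avoids a tangent-space computation and instead exploits the torus symmetry of the set $\sV$ of matrices with orthonormal columns: multiplying each column by $e^{i\theta_k}$ preserves $\sV$, so by Fourier uniqueness any polynomial vanishing on $\sV$ may be taken separately homogeneous in each column, after which the problem reduces to a single column and is immediate. Your CR-genericity sketch also works---in the graph parametrization $V=[v\mid w]$ the $(1,0)$-differentials of the four real defining functions are independent precisely when the columns $v,w$ are linearly independent---but the torus argument is shorter and requires no pointwise check.
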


%In particular we see that if $r$ is of infinite rank then $r$ must
%have signature pair $(\infty,\infty)$.

\begin{proof}
Write
\begin{equation}
r(z,\bar{z}) =
\sum_{j=1}^{A} \abs{f_j(z)}^2
-
\sum_{j=1}^{B} \abs{g_j(z)}^2 
=
\norm{f(z)}^2-\norm{g(z)}^2 ,
\end{equation}
such that the components of $f$ and $g$ are linearly independent.
Let us suppose that $B$ is finite, but for now we allow $f$ to map into
an infinite dimensional Hilbert space.

After an affine change of
coordinates we assume that we are working in a neighborhood of the
origin and that $f$ and $g$ converge in a neighborhood of a closed unit
polydisc $\overline{\Delta}$.  Let $\tilde{Q}(a,b)$
 and $\tilde{\Omega}$
correspond to $Q(a,b)$ and $\Omega$
after this affine change of coordinates.

Let $\sZ$ be the infinite vector of all monomials in $z$.  We find
an
$A \times \infty$ matrix $F$ such that $f(z) = F \sZ$.  Similarly 
we have a $B \times \infty$ matrix $G$ such that $g(z) = G \sZ$.
We know that $\norm{f(z)}^2$ converges in the closed unit polydisc, and so
by Lemma~\ref{lemma:compactop} $F$ defines a Hilbert-Schmidt
operator ($F^*F$ is trace-class) on $\ell^2$.
Similarly $G$ defines a Hilbert-Schmidt operator.

For $z \in \tilde{Q}(a,b) \cap \tilde{\Omega}$
we have 
\begin{equation}
\norm{F \sZ}^2 - \norm{G\sZ}^2 =
\norm{f(z)}^2-\norm{g(z)}^2 = 0 .
\end{equation}
As $f$ and $g$ are linearly independent we have
\begin{equation}
\rank F = A  \qquad \text{and} \qquad \rank G = B .
\end{equation}

Let $L$ be an affine complex submanifold of $\tilde{Q}(a,b)$.
Taking the right embedding of $L$ and applying Lemma~\ref{lemma:restcompact},
we find that the restriction matrix $T_L$ is a Hilbert-Schmidt operator.
Restricted to $L$ we have that
\begin{equation}
\norm{F \sZ}^2 = \norm{G\sZ}^2 .
\end{equation}
We thus have that $T_L F^*F T_L = T_L G^* G T_L$.  In other words,
the rank of $G T_L$ is equal to the rank of $F T_L$.  The rank of $G T_L$
is at most $B$.  That means that the rank of $F T_L$ is
at most $B$ for all $L$ that lie in $\tilde{Q}(a,b)$.

Let $(z',z'') \in \C^a \times \C^b$ be coordinates such that
$Q(a,b)$ is defined by $\norm{z'}^2 - \norm{z''}^2 = 1$.
We assume that $a > b$.
The defining function for $L$ that lie in $Q(a,b)$ is
\begin{equation}
z' =
V
\begin{bmatrix}
z'' \\ 1
\end{bmatrix},
\end{equation}
where $V \in M_{a,b+1}$ (an $a \times (b+1)$ matrix) has orthonormal columns.
Let us denote by $L_V$ the affine manifold defined by such a $V$.

We claim that
the set $\sV \subset M_{a,b+1} \cong \C^{a(b+1)}$
of matrices with orthonormal columns 
is not contained in any proper complex algebraic subvariety of $\C^{a(b+1)}$.
Suppose that there was a polynomial $p$ in $a(b+1)$ variables
that vanishes on $\sV$.  If we multiply the $k$-th column by $e^{i\theta_k}$
we note that $p$ still has to vanish.  By the uniqueness theorem for
Fourier series, there has to be a polynomial that is
independently homogeneous in entries of the $k$-th column (for all columns).
That is, we can multiply each column independently by any complex number
and still stay in the zero locus of $p$.  The claim follows by working on
each column independently.

The claim shows that the set $\sL$ of affine manifolds $L$ corresponding
to $V \in \sV$ is a generic set in $G_{m,n}$.
If we let $\tilde{\sL} \subset G_{m,n}$ correspond to the
set of affine manifolds 
that lie in $\tilde{Q}(a,b)$, we see that $\tilde{\sL}$ must be generic
as well.

We have all the ingredients to apply Lemma~\ref{lemma:finiterankrespos} to
obtain that
\begin{equation}
A = \rank F \leq K_{b,a+b}(B).
\end{equation}
\end{proof}

Let us restate 
Theorem~\ref{thm:rigidity}
for reader convenience before proving it.

\begin{thmnonum}
Let $a > b \geq 1$, $U \subset Q(a,b)$ be a connected open set, and 
$f \colon U \to Q(A,B)$ be a real-analytic CR mapping such that
$f(U)$ does not lie in a complex hyperplane then
\begin{equation}
A \leq N(a,b,B) ,
\end{equation}
where $N = N(a,b,B)$ is a constant depending only on $a$, $b$, and $B$.
\end{thmnonum}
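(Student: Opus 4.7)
The plan is to reduce Theorem~\ref{thm:rigidity} to Lemma~\ref{lemma:rigidsigpair} by constructing, from the CR mapping $f$, a single real-analytic function that vanishes on $Q(a,b)$ and whose signature pair controls the target dimension $A$.

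Since $f$ is a real-analytic CR mapping, it extends to a holomorphic map on some neighbourhood $\Omega$ of a point $p \in U$ in $\C^{a+b}$. Split $f = (f^+, f^-)$ with $f^+ = (f_1,\ldots,f_A)$ and $f^- = (f_{A+1},\ldots,f_{A+B})$, and define
\begin{equation}
r(z,\bar z) := \|f^+(z)\|^2 - \|f^-(z)\|^2 - 1.
\end{equation}
This is real-analytic on $\Omega$, and vanishes on $\Omega \cap Q(a,b)$ because $f$ maps $U \cap Q(a,b)$ into $Q(A,B)$. Writing the coefficient matrix of $r$ as $P - N$, where $P$ is the matrix of $\|f^+\|^2$ and $N$ that of $\|f^-\|^2 + 1$, both $P$ and $N$ are positive semi-definite. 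The hypothesis that $f(U)$ is not contained in any complex hyperplane is equivalent to saying that $1, f_1, \ldots, f_{A+B}$ are linearly independent as holomorphic functions on $\Omega$, and this forces $\rank P = A$ and $\rank N = B+1$.

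Let $(p,q)$ denote the signature pair of $r$; clearly $q \leq \rank N = B+1$. The linear-algebra input that makes the argument go is a Weyl-type inertia bound: for positive semi-definite Hermitian operators $P, N$ of finite rank, the positive inertia of $P - N$ is at least $\rank P - \rank N$. Everything is effectively finite-dimensional since $P$ and $N$ have finite rank, so one reduces to the standard fact: on $\ker N$, $P - N$ agrees with $P$, and a dimension count gives $\rank(P|_{\ker N}) \geq \rank P - \rank N$, producing a subspace of dimension $A-(B+1)$ on which $r$ is positive definite. Hence $p \geq A - (B+1)$, equivalently $A \leq p + B + 1$.

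Finally, apply Lemma~\ref{lemma:rigidsigpair} to $r$: its negative signature $q \leq B+1$ is finite, and $r$ vanishes on $\Omega \cap Q(a,b)$, so $p \leq M(a,b,q) \leq M(a,b,B+1)$, using monotonicity of $M$ in the last variable (which is visible from its proof via $K_{b,a+b}$). Combining with the inertia bound gives
\begin{equation}
A \leq M(a,b,B+1) + B + 1 =: N(a,b,B),
\end{equation}
as required. I do not foresee any serious obstacle; the only step that requires a moment's care is the Weyl-type inertia inequality, which is elementary once one observes the finite-rank reduction, and the bookkeeping confirming that the non-hyperplane hypothesis is exactly the linear-independence condition yielding $\rank P = A$ and $\rank N = B+1$.
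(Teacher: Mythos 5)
Your proposal is correct and takes essentially the same route as the paper: form $r(z,\bar z) = \norm{f^+(z)}^2 - \norm{f^-(z)}^2 - 1$, note that the non-hyperplane hypothesis is exactly linear independence of $1, f_1,\ldots,f_{A+B}$, and apply Lemma~\ref{lemma:rigidsigpair}. The only divergence is in one step: the paper observes that this linear independence forces the signature pair of $r$ to be exactly $(A,B+1)$ (Sylvester's law, via the surjection $v \mapsto (Fv,Gv)$ onto $\C^{A+B+1}$), giving $A \leq M(a,b,B+1)$ directly, whereas your Weyl-type inertia bound only gives $p \geq A-(B+1)$ and hence the slightly weaker $A \leq M(a,b,B+1) + B + 1$ --- which is still perfectly adequate, since the theorem asserts only the existence of some constant $N(a,b,B)$.
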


\begin{proof}
Let $\varphi = (f,g)$ denote a real-analytic CR mapping of $U \subset Q(a,b)$ to
$Q(A,B)$, where
$f \colon U \to \C^A$ and
$g \colon U \to \C^B$.
Plugging into the defining equation of the target hyperquadric we obtain
that for $z \in U$
\begin{equation}
\norm{f(z)}^2-\norm{g(z)}^2-1 = 0 .
\end{equation}

Real-analytic CR mappings extend to holomorphic
mappings on a neighborhood of $U$.  Therefore we assume that $f$ and
$g$ are holomorphic in a neighborhood of $U$.
We let $r(z,\bar{z}) = \norm{f(z)}^2-\norm{g(z)}^2-1$.

The condition that $\varphi(U)$
does not lie in a complex hyperplane is simply stating that
components of the mapping $(f,g,1)$ are linearly independent.
In particular we see that $r$ has rank $A+B+1$, and signature
pair $(A,B+1)$.

We apply Lemma~\ref{lemma:rigidsigpair} to $r$ to obtain
\begin{equation}
A \leq K_{b,a+b}(B+1).
\end{equation}
\end{proof}

%%%%%%%%%%%%%%%%%%%%%%%%%%%%%%%%%%%%%%%%%%%%%%%%%%%%%%%%%%%%%%%%%%%%%%%%%%%%%

\section{Spheres and hyperquadrics in Hilbert space} \label{section:hilbert}

Forstneri\v{c}~\cite{Forstneric:86} has shown that there exist strictly
pseudoconvex real-analytic compact hypersurfaces
that cannot be embedded into a sphere of
any finite dimension.  On the other hand, Lempert~\cite{Lempert} has shown
that a mapping exists if one takes the sphere in the infinite
dimensional Hilbert space $\ell^2$.  Let us make the following
natural definitions to extend hyperquadrics to $\ell^2$:
\begin{align}
& Q(\infty,b) := \Bigl\{ z \in \ell^2 : - \sum_{j=1}^b \abs{z_j}^2 +
\sum_{j=b+1}^\infty \abs{z_j}^2 = 1 \Bigr\} ,
\\
& Q(\infty,\infty) := \Bigl\{ z \in \ell^2 : 
\sum_{j=1}^\infty (\abs{z_{2j-1}}^2 - \abs{z_{2j}}^2 ) = 1 \Bigr\} .
\end{align}
We write $S^\infty = Q(\infty,0)$ for the unit sphere in $\ell^2$.

By a real-analytic CR mapping from a CR manifold $M$ to $Q(\infty,b)$ we 
mean a holomorphic mapping to $\ell^2$ defined on a neighborhood of $M$
taking $M$ to $Q(\infty,b)$.

We use the ideas from the previous section to construct mappings.
For example, consider the Hermitian form
\begin{equation}
r(z,\bar{z}) = \frac{e^{\abs{z_1}^2+\abs{z_2}^2} - 1}{e-1}-1.
\end{equation}
The function $r$ vanishes precisely on the unit sphere in $\C^2$.
In fact $\frac{e^{\abs{z_1}^2+\abs{z_2}^2} - 1}{e-1}$ is 1 on the unit
sphere and is positive semi-definite.  Hence
$r$ has signature pair $(\infty,1)$; that is, $r$ has 
infinitely positive eigenvalues
and exactly one negative eigenvalue corresponding to
the $-1$.  The form induces a real-analytic CR mapping
\begin{equation}
f \colon S^2 \to S^\infty .
\end{equation}
We expand the series:
\begin{equation}
\begin{split}
\frac{e^{\abs{z_1}^2+\abs{z_2}^2} - 1}{e-1}
 & =
\frac{(\abs{z_1}^2+\abs{z_2}^2)}{e-1}
+
\frac{(\abs{z_1}^2+\abs{z_2}^2)^2}{2(e-1)}
+
\frac{(\abs{z_1}^2+\abs{z_2}^2)^3}{6(e-1)}
+ \cdots
\\
& =
\abs{\frac{z_1}{\sqrt{e-1}}}^2+\abs{\frac{z_2}{\sqrt{e-1}}}^2 +
\\
& \hspace*{1cm} +
\abs{\frac{z_1^2}{\sqrt{2(e-1)}}}^2 +
\abs{\frac{z_1z_2}{\sqrt{2(e-1)}}}^2 +
\abs{\frac{z_2^2}{\sqrt{2(e-1)}}}^2 +
\cdots
%\\
%& \hspace*{1cm} +
%\abs{\frac{z_1^3}{\sqrt{6(e-1)}}}^2 +
%\abs{\frac{z_1^2z_2}{\sqrt{6(e-1)}}}^2 +
%\abs{\frac{z_1z_2^2}{\sqrt{6(e-1)}}}^2 +
%\abs{\frac{z_2^3}{\sqrt{6(e-1)}}}^2 +
 %\cdots
\end{split}
\end{equation}
Therefore
\begin{multline}
f(z_1,z_2) = %,\bar{z}_1,\bar{z}_2) = 
\Bigl(
\frac{z_1}{\sqrt{e-1}},
\frac{z_2}{\sqrt{e-1}},
\frac{z_1^2}{\sqrt{2(e-1)}},
\frac{z_1z_2}{\sqrt{2(e-1)}},
\frac{z_2^2}{\sqrt{2(e-1)}},
\\
\frac{z_1^3}{\sqrt{6(e-1)}},
\frac{z_1^2z_2}{\sqrt{6(e-1)}},
\frac{z_1z_2^2}{\sqrt{6(e-1)}},
\frac{z_2^3}{\sqrt{6(e-1)}}, \ldots \Bigr) .
\end{multline}
The image is not contained in a hyperplane; this fact can
be seen directly by uniqueness of power series.

By taking
\begin{equation}
r_2(z,\bar{z}) = r(z,\bar{z}) + \abs{\varphi(z)}^2 r(z,\bar{z})
\end{equation}
for an arbitrary holomorphic function $\varphi(z)$, we obtain a new form
with signature pair $(\infty,2)$, and therefore a CR mapping
$f \colon S^{3} \to Q(\infty,1)$,
whose image is not contained in a hyperplane.  By repeating the procedure
we construct mappings
\begin{equation}
f \colon S^{3} \to Q(\infty,b)
\end{equation}
for any finite $b$.
Finally, by considering the form $r =
\sin\bigl(\frac{\pi}{2}(\abs{z_1}^2+\abs{z_2}^2)\bigr)$ we obtain
a mapping from $S^3$ to $Q(\infty,\infty)$.

As in the finite dimensional case, the situation changes when the
source is a hyperquadric not equivalent to a sphere.
If we apply Lemma~\ref{lemma:rigidsigpair}, we notice the following immediate
consequence.

\begin{cor}
Let $a \geq 2$, $b \geq 1$.  Let $U \subset Q(a,b)$ be a connected open
set and $f \colon U \to Q(\infty,B)$, where $B \in \N_0 \cup \{ \infty \}$,
be a real-analytic CR mapping
such that $f(U)$ is not contained in any complex hyperplane of $\ell^2$.
Then $B = \infty$.
\end{cor}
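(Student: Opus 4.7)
The plan is to argue by contradiction, reducing directly to Lemma~\ref{lemma:rigidsigpair} by pulling back the defining equation of $Q(\infty,B)$ via $f$. I would suppose toward contradiction that $B < \infty$. After shrinking $U$ if necessary and using that a real-analytic CR map extends holomorphically to a two-sided neighborhood, I produce a connected open $\Omega \subset \C^{a+b}$ with a nonempty connected piece $\Omega \cap Q(a,b) \subset U$, on which a holomorphic extension of $f$ (still denoted $f$) is defined. Splitting the components as $f = (f^-,f^+)$ with $f^- = (f_1,\ldots,f_B)$ and $f^+ = (f_{B+1},f_{B+2},\ldots)$, I set
\begin{equation}
r(z,\bar z) := \norm{f^+(z)}^2 - \norm{f^-(z)}^2 - 1 .
\end{equation}
Since $f(U) \subset Q(\infty,B)$, this real-analytic function vanishes on $\Omega \cap Q(a,b)$. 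After a harmless affine rescaling, its coefficient operator is trace-class exactly as in Lemma~\ref{lemma:compactop}, so the signature pair is well-defined.

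The next step is to determine the signature pair $(A,B')$ of $r$. The negative part $-\norm{f^-}^2 - 1$ has coefficient operator of rank $\dim_\C \text{span}\{f_1,\ldots,f_B,1\} \leq B+1$, so $B' \leq B+1 < \infty$. The quantitative input from the hypothesis is that $A = \infty$. Indeed, by Riesz representation, every closed complex hyperplane in $\ell^2$ has the form $\{w : \langle w,v\rangle = c\}$ with $v \in \ell^2 \setminus \{0\}$, so $f(U)$ being contained in no such hyperplane is equivalent to the nonexistence of a nontrivial $\ell^2$ relation $\sum_k \bar v_k f_k(z) \equiv c$ on $U$. If $\dim_\C \text{span}\{1,f_1,f_2,\ldots\}$ were finite, say equal to $N$, then any $N+1$ of the $f_k$ would satisfy a finite (hence trivially $\ell^2$) linear dependence, producing precisely such a hyperplane and contradicting the hypothesis. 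Therefore this span is infinite-dimensional, and since $\{f_1,\ldots,f_B,1\}$ contributes only finitely many dimensions, $\text{span}\{f_j : j > B\}$ must itself be infinite-dimensional, giving $A = \infty$.

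With the setup in hand, Lemma~\ref{lemma:rigidsigpair} closes the argument. As in the introduction, we may assume $a > b$ after a preliminary linear fractional change of coordinates, so the lemma applied to $r$ on $\Omega$ yields that since $B' < \infty$, we have $A \leq M(a,b,B') < \infty$, directly contradicting $A = \infty$. Hence $B$ could not have been finite, so $B = \infty$.

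The only real subtlety, and what I expect to be the delicate step, is the signature bookkeeping in the middle paragraph: one must be careful that ``not lying in a hyperplane'' in the Hilbert-space sense genuinely forces the $\C$-span of the component functions to be infinite-dimensional rather than merely satisfying some weaker $\ell^2$-linear independence. Once this is cleanly established, the rest of the proof is a direct invocation of Lemma~\ref{lemma:rigidsigpair}, whose proof already handled infinite-dimensional positive parts.
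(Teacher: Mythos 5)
Your proof is correct and follows essentially the same route as the paper: form $r = \norm{f^+}^2 - \norm{f^-}^2 - 1$, observe that the non-hyperplane hypothesis forces the positive part of the signature pair to be infinite while the negative part is at most $B+1$, and invoke Lemma~\ref{lemma:rigidsigpair} (after normalizing $a>b$) to get a contradiction. The paper treats this as an immediate consequence of that lemma, exactly as you do; your extra care about what ``hyperplane in $\ell^2$'' means is a reasonable elaboration, not a divergence.
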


Therefore, there is no mapping from $Q(a,b)$ for finite $a$ and $b$ into
a $Q(\infty,B)$ for finite $B$.  This rigidity result does not
generalize to other hypersurfaces with indefinite nondegenerate
Levi-form.  In this case we can construct mappings to
$Q(\infty,B)$ for finite $B$.

For example, define $r$ as
\begin{equation}
r(z,\bar{z}) =
e^{\abs{z_1+1}^2+\abs{z_2}^2} - e - \abs{z_3}^2 .
\end{equation}
By expanding the series we see that
the signature pair of $r$ is $(\infty,2)$.  Furthermore, we look
at the series up to the quadratic terms
\begin{equation}
r(z,\bar{z}) = 
2e \Re z_1 + \abs{z_2}^2-\abs{z_3}^2 + \text{higher order terms} .
\end{equation}
The Levi form of $M = \{ r = 0 \}$ at the
origin is indefinite;
that is, it has one positive and one negative eigenvalue.  As the signature
pair is $(\infty,2)$ we obtain a real-analytic CR mapping
\begin{equation}
f \colon M \to Q(\infty,1) ,
\end{equation}
whose image is not contained in a complex hyperplane.
Similarly we can easily obtain a submanifold with $b$ negative eigenvalues
and arbitrary number of positive eigenvalues
in the Levi-form that admits a real-analytic CR mapping to $Q(\infty,b)$
whose image is not contained in a hyperplane.

The same construction can be used in the finite dimensional case.
By truncating the series expansion of $r$, we obtain a polynomial
$r'$ and a manifold $M' = \{ r'=0 \}$ with an indefinite Levi-form at $0$, and
such that $r'$ has signature pair $(A,2)$ for an arbitrarily high finite $A$.
We obtain a mapping from $M'$ to $Q(A,1)$ whose image is not
contained in a hyperplane.

%%%%%%%%%%%%%%%%%%%%%%%%%%%%%%%%%%%%%%%%%%%%%%%%%%%%%%%%%%%%%%%%%%%%%%%%%%%%%

\section{Construction of mappings} \label{section:construct}

Let us give some explicit examples in nonhomogeneous coordinates
before we give a general method for constructing mappings.
%We build on the procedure of the examples in the next section.

\begin{example}
Let $(z,w) \in \C^{a} \times \C^b$.
We tensor with the identity (which we write as $(z,w)$)
on the $z_a$ variable.  That is, we construct
the mapping
\begin{equation}
(z,w) \mapsto
%\bigl(z, w_1, \ldots, w_{b-1}, w_b \otimes (z,w) \bigr) .
\bigl(z_1, \ldots, z_{a-1}, z_a \otimes (z, w), w \bigr) .
\end{equation}
We %reorder the components to 
obtain a mapping taking
$Q(a,b)$ to $Q(2a-1,2b)$.
That is,
\begin{equation}
(z,w) \mapsto
\bigl(z_1,\ldots,z_{a-1},
z_a z_1, \ldots, z_a^2,
z_a w_1, \ldots, z_a w_b,
w_1, \ldots, w_b \bigr) .
%\bigl(z,
%w_b z_1, w_b z_2, \ldots, w_b z_a,
%w_1, w_2, \ldots, w_{b-1},
%w_b w_1, w_b w_2, \ldots, w_b^2 \bigr) .
\end{equation}
Therefore the super-rigidity result of Baouendi, Ebenfelt, and Huang
\cite{BEH:hq} is sharp.  In the same language as in the introduction
we have that $A=2a-1$ and $B=2b$.
On the other hand, by Theorem~\ref{thm:rigidity} we are
not able to make $A$ arbitrarily large while keeping $B=2b$.
\end{example}

\begin{example}
We could repeat the procedure of the first example any number of times.
We 
take a mapping $\varphi \colon Q(a,b) \to Q(A,B)$ and construct
\begin{equation}
\bigl(\varphi_1, \ldots, \varphi_{A} \otimes (z,w), 
\varphi_{A+1}, \ldots, \varphi_{A+B} \bigr) .
\end{equation}
We obtain 
\begin{multline}
(z,w) \mapsto
\bigl(\varphi_1(z,w), \ldots, \varphi_{A-1}(z,w),
z_1 \varphi_{A}(z,w), \ldots
z_a \varphi_{A}(z,w),
\\
w_1 \varphi_{A}(z,w), \ldots
w_b \varphi_{A}(z,w),
\varphi_{A+1}(z,w), \ldots,
\varphi_{A+B}(z,w) \bigr)
\end{multline}
taking
$Q(a,b)$ to $Q(A+a-1,B+b)$.
\end{example}

%We already know how to construct mappings from a real-analytic function
%vanishing on the hyperquadric.
For the constructions,
it is easier to work in homogeneous coordinates.
We define the homogeneous hyperquadric
\begin{equation}
HQ(a,b) := \Bigl\{ z \in \C^{a+b} : \sum_{j=1}^a \abs{z_j}^2 - 
\sum_{j=a+1}^{a+b} \abs{z_j}^2 = 0 \Bigr\} .
\end{equation}
The set $HQ(a,b)$ defines a real hypersurface in the projective
space $\C\bP^{a+b-1}$.
By setting $z_{a+b} = 1$ we obtain $Q(a,b-1)$ in $\C^{a+b-1}$.
%We see below that the homogeneous setting is more natural when working
%with rational mapping.
The homogeneous setting is more symmetric, although one has to be careful when
converting back to the nonhomogeneous situation.

Suppose
that $r(z,\bar{z})$ is a bihomogeneous polynomial and vanishes on $HQ(a,b)$.
If $r$ has signature pair $(A,B)$ (that is $A$ positive and
$B$ negative eigenvalues), then we find
homogeneous polynomials $f \colon \C^{a+b} \to \C^A$ and $g \colon \C^{a+b}
\to \C^B$ with linearly independent components such that
\begin{equation}
r(z,\bar{z}) = {\lVert f(z) \rVert}^2 - {\lVert g(z) \rVert}^2;
\end{equation}
therefore the mapping $z \mapsto \bigl(f(z),g(z)\bigr)$ takes $HQ(a,b)$
to $HQ(A,B)$.  By dividing by $g_B$ and setting $z_{a+b} = 1$
we obtain a rational CR mapping $F \colon Q(a,b-1) \to Q(A,B-1)$.  If
the components of $(f,g)$ are linearly independent then
the image of $F$ does not lie in a complex hyperplane.

Therefore when constructing examples we will consider homogeneous polynomial mappings from $HQ(a,b)$ to
$HQ(A,B)$ with linearly independent components.

%%%%%%%%%%%%%%%%%%%%%%%%%%%%%%%%%%%%%%%%%%%%%%%%%%%%%%%%%%%%%%%%%%%%%%%%%%%%%

\section{Stability} \label{section:stability}

We wish to show that as long as the number of positive and negative
eigenvalues of the target hyperquadric grow in a comparable way, then
nontrivial hyperquadric mappings always exist.  We have the following
stability result.
We make the normalization $a \geq b$ for $HQ(a,b)$.

\begin{thm} \label{thm:stability}
Fix integers $a \geq b \geq 2$.  There exists an $M$ ($M=a^2+ab-2a+1$ 
suffices) such that 
when $A+B \geq M$, where $A, B \geq 2$ and
\begin{equation}
\frac{B-b+1}{A} \geq \frac{b-1}{a}
\qquad
\text{and}
\qquad
\frac{A-b+1}{B} \geq \frac{b-1}{a},
\end{equation}
then there exists a homogeneous polynomial mapping $F \colon HQ(a,b) \to HQ(A,B)$
with linearly independent components.  That is, there exists
a rational CR mapping $\tilde{F} \colon Q(a,b-1) \to Q(A,B-1)$
whose 
image is not contained in a complex hyperplane.
\end{thm}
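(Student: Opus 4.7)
The proof is constructive; I will build, for every admissible $(A,B)$, an explicit homogeneous polynomial map $F\colon HQ(a,b)\to HQ(A,B)$ whose components are single weighted monomials that are pairwise distinct, so linear independence is automatic. Writing $\rho(z,\bar z)=\sum_{i=1}^{a}|z_i|^2-\sum_{j=a+1}^{a+b}|z_j|^2$, any monomial mapping to $HQ(A,B)$ corresponds to a diagonal bihomogeneous real polynomial $R(z,\bar z)=\sum_{\gamma}\epsilon_\gamma c_\gamma^2 |z^\gamma|^2$ with $\epsilon_\gamma\in\{-1,+1\}$ that is divisible by $\rho$; the signature pair of $R$ is $(A,B)$, where $A=\#\{\gamma:\epsilon_\gamma=+1\}$ and $B=\#\{\gamma:\epsilon_\gamma=-1\}$.

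\textbf{Step 1 (Basic block).} For any multi-index $\alpha$ with $|\alpha|=d-1$ the identity
\[
\sum_{k=1}^{a}|z_k z^{\alpha}|^2-\sum_{k=a+1}^{a+b}|z_k z^{\alpha}|^2=|z^\alpha|^2\rho(z,\bar z)
\]
vanishes on $HQ(a,b)$, giving a degree-$d$ mapping to $HQ(a,b)$ whose $a+b$ components are distinct monomials. Choosing $N$ multi-indices $\alpha_1,\dots,\alpha_N$ of degree $d-1$ with sufficiently generic supports so that all $z_k z^{\alpha_i}$ are pairwise distinct, the direct sum yields a monomial mapping $HQ(a,b)\to HQ(Na,Nb)$. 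This populates the ray $B/A=b/a$.

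\textbf{Step 2 (Collision engineering).} To reach other ratios $B/A$, I deliberately let two blocks $\alpha,\alpha'$ share a monomial of opposite signs, i.e.\ choose $\alpha+e_k=\alpha'+e_l$ with $k\leq a<l$. The two occurrences cancel, replacing $(A+1,B+1)$ by $(A,B)$, moving along the line $A-B=\mathrm{const}$. More elaborate patterns of engineered cancellations produce other step vectors; combining blocks with varying degrees of collision produces a collection of basic generators $(a_i,b_i)\in\Z_{\geq 0}^2$ whose ratios $b_i/a_i$ span the entire interval $\bigl[(b-1)/a,\;a/(b-1)\bigr]$ demanded by the theorem. (The extra flexibility beyond the ``collision-free'' cone $[b/a,a/b]$ comes precisely from these cancellations, and is what makes the bound $b-1$, rather than $b$, appear in the hypotheses.)

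\textbf{Step 3 (Covering the cone).} By direct sums of mappings of matching degree (with degree equalization achieved by multiplication by disjoint-support monomial factors, so linear independence is preserved), the realizable $(A,B)$ form an additive subsemigroup $\Sigma\subset\Z_{\geq 0}^{2}$ containing the generators of Step 2. A Frobenius-type lattice-covering argument then shows that every $(A,B)$ with $A,B\geq 2$, satisfying the two comparability inequalities, and with $A+B\geq M:=a^{2}+ab-2a+1$, lies in $\Sigma$. The constant $M$ arises as an explicit Frobenius bound for the generating set; for $(A,B)$ beyond this threshold and inside the cone, non-negative integer combinations of the generators exhaust the admissible lattice points.

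\textbf{Main obstacle.} The crux is Step 3 together with the quantitative bound $M=a^{2}+ab-2a+1$. To obtain it I must (a) produce enough generators in Step 2 so that the rational ratios $b_i/a_i$ reach arbitrarily close to the endpoints $(b-1)/a$ and $a/(b-1)$, and (b) carry out a Frobenius-style covering count showing that the semigroup $\Sigma$ exhausts admissible lattice points past the prescribed threshold. A subsidiary obstacle is ensuring that the monomials in the various direct-sum blocks remain pairwise distinct, which forces a careful bookkeeping of multi-index supports at each stage.
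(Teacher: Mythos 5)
Your overall strategy --- realizing signature pairs by diagonal multiples of $s=x_1+\cdots+x_a-x_{a+1}-\cdots-x_{a+b}$ after the substitution $x_k=\abs{z_k}^2$, noting that sums with disjoint monomial support make the achievable pairs an additive structure, and then covering the cone by a lattice argument --- is the strategy the paper uses, and your Step 1 and the single-collision move in Step 2 are correct. But the entire content of the theorem sits inside the rest of Step 2, and there it is asserted rather than proved. Iterating collisions among $N$ blocks of type $(a,b)$ only produces pairs $(Na-c,Nb-c)$, and to reach the boundary slope $(b-1)/a$ you would need $c=Na/(a-b+1)$ simultaneous cancellations $\alpha_i+e_k=\alpha_j+e_l$; this is a combinatorial design you have not exhibited, and its feasibility is not obvious since each cancellation consumes one positive and one negative monomial and the identifications interact. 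Worse, ``ratios arbitrarily close to the endpoints'' cannot be enough for a \emph{uniform} threshold $M$: points of the stated region near its boundary have $B/A\to(b-1)/a$ as $A+B\to\infty$, and any nonnegative combination of generators has slope at least the minimum slope of the generators used, so one would be forced to use generators of slope within $O(1/A)$ of $(b-1)/a$, hence of size comparable to the target itself. You therefore need moves of slope \emph{exactly} $(b-1)/a$ and $a/(b-1)$.

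The paper obtains these extreme moves not as standalone direct-sum blocks but as multiplicative modifications of an existing admissible $p=m+q$ (with $m$ a positive-coefficient monomial free of $x_{a+b}$): for instance $\hat p=\tfrac12 m(s+x_{a+b})+\tfrac12 x_{a+b}m+x_{a+b}q$ equals $\tfrac12 ms+x_{a+b}p$, hence vanishes on $s=0$, and has signature $(A,B)+(a,b-1)$; similarly $\tilde p=m(s+x_{a+b})+x_{a+b}q$ gives $(A,B)+(a-1,b-1)$. These reuse the monomial $m$ and are not sums of independent admissible blocks, so it is doubtful your semigroup-of-standalone-generators framework can produce them at all. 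With the moves $(a,b)$, $(a,b-1)$, $(a-1,b)$, $(a-1,b-1)$ and their $a\leftrightarrow b$ swaps, the paper fills ``squares'' of side $n+k$ below the lattice points $(b,a)+n(a,b)+k(b,a)$, and $M=a^2+ab-2a+1$ falls out of the computation of when consecutive squares overlap --- not from a Frobenius bound for an unspecified generating set, which is your second gap: the constant cannot be ``derived'' from generators that have not been written down. To repair the argument you must (i) construct the slope-$(b-1)/a$ and slope-$a/(b-1)$ moves explicitly and (ii) replace the appeal to a Frobenius bound by an actual covering computation.
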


In \cite{DL:hermsym} it is proved that when the source is a sphere
(that is, $HQ(a,1)$) then such a rational
mapping always exists for large $A+B$ provided
only that $A,B \geq 1$.  In the hyperquadric case, this sort of
stability result holds in a sector, where the size of the sector depends
on $a$ and $b$.  There may exist mappings outside of the sector given above;
however,
by Theorem~\ref{thm:rankres} it is clear that any sector in
which existence holds must make an acute angle.

The signature pairs of mappings that are constructed in the proof along
with the inequalities are illustrated in
Figure~\ref{fig:stability}.

\begin{figure}[h!t]
\begin{center}
\includegraphics{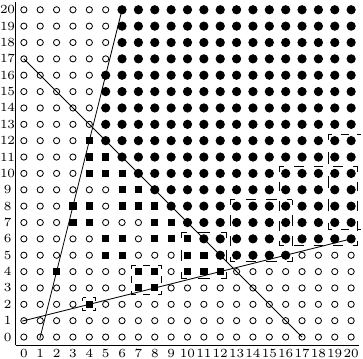}
\caption{Diagram of mappings constructed in the proof of
Theorem~\ref{thm:stability} for the source hyperquadric $HQ(4,2)$.
The lattice represents the positive quadrant of the $(A,B)$
plane.  A black dot represents that a mapping to $HQ(A,B)$ exists
by applying the theorem.  A black square represents that
a mapping was also constructed in the proof.  A circle represents a position
in the lattice for which the theorem tells us nothing.
The three lines represent the limits of the stability region from the theorem.
The dashed squares represent one succession of the ``squares'' constructed in
the proof.
\label{fig:stability}}
\end{center}
\end{figure}

\begin{proof}
Suppose $p(x)$
is a homogeneous polynomial
in the variables $x_1,\ldots,x_{a+b}$
with $A$ positive and $B$ negative coefficients such that $p(x) = 0$
when $x_1 + \cdots + x_a - x_{a+1} - \cdots - x_{a+b} = 0$.  We call
such polynomials \emph{admissible} for $(A,B)$.  
We define 
\begin{equation}
s := x_1 + \cdots + x_a - x_{a+1} - \cdots - x_{a+b} .
\end{equation}
If $p$ is admissible for $(A,B)$ then
we let $x_k = \abs{z_k}^2$ in $p$.  We obtain a Hermitian symmetric
polynomial
\begin{equation}
r(z,\bar{z}) = p(x) .
\end{equation}
As distinct monomials are linearly independent,
the rank of $r$ is equal to the number of distinct monomials of $p$, that is
$A+B$ and the signature pair is $(A,B)$.  As $r$ vanishes
on $s=0$, we have that $r$ induces a CR mapping of
$HQ(a,b)$ to $HQ(A,B)$ such that the image is not contained in a complex
hyperplane.

Therefore we work with admissible
polynomials $p$.  We fix $a$ and $b$
and we work in the
plane $\N_0^2$
and say $(A,B) \in E \subset \N_0^2$ whenever
an admissible $p$ exists for $(A,B)$ (that is, a mapping to $HQ(A,B)$ exists).

We note that $(a,b) \in E$
and $(b,a) \in E$.  If $p$ is admissible for $(A,B)$ of degree $m$
then $\tilde{p} = x_1^{k-m}p + x_2^{k-1} s$ vanishes when $s$ vanishes.
If we pick $k$ large enough then the monomials of 
$x_1^{k-m}p$ and $x_2^{k-1} s$ are distinct and
$\tilde{p}$ is admissible for $(A+a,B+b)$.  By using $-s$ instead of $s$
we build an admissible polynomial for $(A+b,B+a)$.  Therefore
for any $n,k \in \N_0$
\begin{equation}
(a,b)+n(a,b)+k(b,a) \in E
\qquad \text{and} \qquad
(b,a)+n(a,b)+k(b,a) \in E.
\end{equation}

We get a lattice of admissible points.  We claim
that for each
point $(b,a)+n(a,b)+k(b,a)$ we also obtain that the ``square''
\begin{equation}
\{ (b,a)+n(a,b)+k(b,a)-(j,m) : 0 \leq j, m \leq n+k \}
\end{equation}
is a subset of $E$.
Once we have the claim, it is a matter of noticing that
the squares must overlap for large enough $n+k$.  The inequalities from
the theorem follow.  See Figure~\ref{fig:stability} for illustration of the
proof.

To prove the claim,
we need to show that if
$(A,B)$ is admissible, then the points
\begin{equation}
\begin{aligned}
& (A,B) + (a,b), & &
(A,B) + (a,b-1), \\
& (A,B) + (a-1,b), & &
(A,B) + (a-1,b-1), \\
\end{aligned}
\end{equation}
and
\begin{equation}
\begin{aligned}
& (A,B) + (b,a),  & &
(A,B) + (b,a-1), \\
& (A,B) + (b-1,a),  & &
(A,B) + (b-1,a-1) .
\end{aligned}
\end{equation}
are also admissible.

We have already seen that
the first point belongs to $E$.  Take
$p$ that is admissible for $(A,B)$.  By dividing through
by $x_{a+b}$ we can suppose that 
$p$ contains a monomial $m$ that does not depend on $x_{a+b}$.
Furthermore we assume that $m$ has a positive coefficient,
since
if no such monomial existed, setting $x_{a+b} = 0$ would
obtain a contradiction.

Write $p = m + q$ and construct
\begin{equation}
\tilde{p} = m(s+x_{a+b}) + x_{a+b} q .
\end{equation}
Note that $s+x_{a+b} = x_1 + \cdots - x_{a+b-1}$.
The polynomial $\tilde{p}$ vanishes when $s$ vanishes.
As the coefficient of $m$ is positive then
$\tilde{p}$ is admissible for $(A,B)+(a-1,b-1)$.

Similarly we construct
\begin{equation}
\hat{p} = \frac{m(s+x_{a+b})}{2} + \frac{x_{a+b} m}{2} + x_{a+b} q .
\end{equation}
The polynomial $\hat{p}$ vanishes when $s$ vanishes.
As the coefficient of $m$ is positive then
$\tilde{p}$ is admissible for $(A,B)+(a,b-1)$.

All the other points in the claim follow by variations
on the above construction.
\end{proof}

%%%%%%%%%%%%%%%%%%%%%%%%%%%%%%%%%%%%%%%%%%%%%%%%%%%%%%%%%%%%%%%%%%%%%%%%%%%%%

%FIXME: else I don't get links, weird
%\renewcommand\MR[1]{\relax\ifhmode\unskip\spacefactor3000 \space\fi
  %\def\@tempa##1:##2:##3\@nil{%
    %\ifx @##2\@empty##1\else\textbf{##1:}##2\fi}%
  %\href{http://www.ams.org/mathscinet-getitem?mr=#1}{MR \@tempa#1:@:\@nil}}
\def\MR#1{\relax\ifhmode\unskip\spacefactor3000 \space\fi%
  \href{http://www.ams.org/mathscinet-getitem?mr=#1}{MR#1}}

\begin{bibdiv}
\begin{biblist}

\bib{AG:linop}{book}{
   author={Akhiezer, N. I.},
   author={Glazman, I. M.},
   title={Theory of linear operators in Hilbert space},
   %note={Translated from the Russian and with a preface by Merlynd Nestell;
   %Reprint of the 1961 and 1963 translations;
   %Two volumes bound as one},
   publisher={Dover Publications Inc.},
   place={New York},
   date={1993},
   pages={xiv+147+iv+218},
   isbn={0-486-67748-6},
   review={\MR{1255973}},
   %review={\MR{1255973 (94i:47001)}},
}

\bib{BER:book}{book}{
      author={Baouendi, M.~Salah},
      author={Ebenfelt, Peter},
      author={Rothschild, Linda~Preiss},
       title={Real submanifolds in complex space and their mappings},
      series={Princeton Mathematical Series},
   publisher={Princeton University Press},
     address={Princeton, NJ},
        date={1999},
      volume={47},
        ISBN={0-691-00498-6},
      review={\MR{1668103}},
}

\bib{BEH:hq}{article}{
   author={Baouendi, M. S.},
   author={Ebenfelt, Peter},
   author={Huang, Xiaojun},
   title={Holomorphic mappings between hyperquadrics with small signature difference},
    %note = {\href{http://www.arxiv.org/abs/0906.1235}{arXiv:906.1235}},
   journal={Amer.\ J.\ Math.},
    volume={133},
   number={6},
    year={2011},    
   pages={1633--1661},
   doi={10.1353/ajm.2011.0044},
}

%\bib{BEH}{article}{
%   author={Baouendi, M. S.},
%   author={Ebenfelt, Peter},
%   author={Huang, Xiaojun},
%   title={Super-rigidity for CR embeddings of real hypersurfaces into
%   hyperquadrics},
%   journal={Adv.\ Math.},
%   volume={219},
%   date={2008},
%   number={5},
%   pages={1427--1445},
%   issn={0001-8708},
%   review={\MR{2458142}},
%}

\bib{BH}{article}{
   author={Baouendi, M. S.},
   author={Huang, Xiaojun},
   title={Super-rigidity for holomorphic mappings between hyperquadrics with
   positive signature},
   journal={J.\ Differential Geom.},
   volume={69},
   date={2005},
   number={2},
   pages={379--398},
   issn={0022-040X},
   review={\MR{2169869}},
}

\bib{DAngelo:CR}{book}{
      author={D'Angelo, John~P.},
       title={Several complex variables and the geometry of real
  hypersurfaces},
      series={Studies in Advanced Mathematics},
   publisher={CRC Press},
     address={Boca Raton, FL},
        date={1993},
        ISBN={0-8493-8272-6},
      review={\MR{1224231}},
}

\bib{DL:complex}{article}{
      author={D'Angelo, John~P.},
      author={Lebl, Ji{\v r}\'i},
      title={Complexity results for CR mappings between spheres},
      journal={Internat.\ J.\ Math.},
      volume={20},
      number={2},
      pages={149--166},
      year={2009},
      review={\MR{2493357}},
}

\bib{DL:hermsym}{article}{
      author={D'Angelo, John~P.},
      author={Lebl, Ji{\v r}\'i},
       title={Hermitian symmetric polynomials and CR complexity},
      journal={J.\ Geom.\ Anal.},
      volume={21},
      number={3},
      pages={599--619},
      year={2011},
      review={\MR{2810845}},
}

\bib{D:hilbertadvances}{article}{
   author={D'Angelo, John P.},
   title={Hermitian analogues of Hilbert's 17-th problem},
      journal={Adv.\ Math.},
      volume={226},
      number={5},
      pages={4607--4637},
      year={2011},
      review={\MR{2770459}},
}

\bib{DV}{article}{
   author={D'Angelo, John P.},
   author={Varolin, Dror},
   title={Positivity conditions for Hermitian symmetric functions},
   journal={Asian J.\ Math.},
   volume={8},
   date={2004},
   number={2},
   pages={215--231},
   issn={1093-6106},
   review={\MR{2129535}},
   %review={\MR{2129535 (2006e:32005)}},
}

\bib{Forstneric}{article}{
   author={Forstneri{\v{c}}, Franc},
   title={Extending proper holomorphic mappings of positive codimension},
   journal={Invent.\ Math.},
   volume={95},
   date={1989},
   number={1},
   pages={31--61},
   issn={0020-9910},
   %review={\MR{969413 (89j:32033)}},
   review={\MR{969413}},
}

\bib{Forstneric:86}{article}{
   author={Forstneri{\v{c}}, Franc},
   title={Embedding strictly pseudoconvex domains into balls},
   journal={Trans. Amer. Math. Soc.},
   volume={295},
   date={1986},
   number={1},
   pages={347--368},
   issn={0002-9947},
   %review={\MR{831203 (87k:32052)}},
   review={\MR{831203}},
   doi={10.2307/2000160},
}

%\bib{HornJohnson}{book}{
%   author={Horn, Roger A.},
%   author={Johnson, Charles R.},
%   title={Matrix analysis},
%   publisher={Cambridge University Press},
%   place={Cambridge},
%   date={1985},
%   pages={xiii+561},
%   isbn={0-521-30586-1},
%   review={\MR{832183}},
%}

%\bib{HornSergeichuk}{article}{
%   author={Horn, Roger A.},
%   author={Sergeichuk, Vladimir V.},
%   title={Canonical forms for complex matrix congruence and *congruence},
%   journal={Linear Algebra Appl.},
%   volume={416},
%   date={2006},
%   number={2-3},
%   pages={1010--1032},
%   issn={0024-3795},
%   review={\MR{2242477}},
%}

\bib{Green:gin}{article}{
   author={Green, Mark},
   title={Generic initial ideals},
   conference={
      title={Six lectures on commutative algebra},
      address={Bellaterra},
      date={1996},
   },
   book={
      series={Progr. Math.},
      volume={166},
      publisher={Birkh\"auser},
      place={Basel},
   },
   date={1998},
   pages={119--186},
   %review={\MR{1648665 (99m:13040)}},
   review={\MR{1648665}},
}

\bib{Green}{inproceedings}{
   author={Green, Mark},
   title={Restrictions of linear series to hyperplanes, and some results of Macaulay and Gotzmann},
   booktitle={Algebraic curves and projective geometry},
   publisher = {Springer},
   volume = {1389},
   series = {Lecture Notes in Math.},
   pages = {76--86},
   year = {1989},  
   review={\MR{1023391}},
}

\bib{HJX}{article}{
   author={Huang, Xiaojun},
   author={Ji, Shanyu},
   author={Xu, Dekang},
   title={A new gap phenomenon for proper holomorphic mappings from $B\sp n$
   into $B\sp N$},
   journal={Math.\ Res.\ Lett.},
   volume={13},
   date={2006},
   number={4},
   pages={515--529},
   issn={1073-2780},
   review={\MR{2250487}},
}

\bib{LL:proper}{article}{
   author={Lebl, Ji{\v{r}}{\'{\i}}},
   author={Lichtblau, Daniel},
   title={Uniqueness of certain polynomials constant on a line},
   journal={Linear algebra Appl.},
   volume={433},
   year={2010},
   pages={824--837},
   review={\MR{2654111}},
}

\bib{Lempert}{article}{
   author={Lempert, L{\'a}szl{\'o}},
   title={Imbedding Cauchy-Riemann manifolds into a sphere},
   journal={Internat.\ J.\ Math.},
   volume={1},
   date={1990},
   number={1},
   pages={91--108},
   issn={0129-167X},
   %review={\MR{1044662 (91c:32013)}},
   review={\MR{1044662}},
   doi={10.1142/S0129167X90000071},
}

\bib{Lewy:ext}{article}{
   author={Lewy, Hans},
   title={On the local character of the solutions of an atypical linear
   differential equation in three variables and a related theorem for
   regular functions of two complex variables},
   journal={Ann. of Math. (2)},
   volume={64},
   date={1956},
   pages={514--522},
   issn={0003-486X},
   %review={\MR{0081952 (18,473b)}},
   review={\MR{0081952}},
}

\end{biblist}
\end{bibdiv}

\end{document}